\newcommand\hmb{\Hto_\bullet}
\newcommand\hmfrom{\widehat{\textit{HM}}_\bullet}
\newcommand\hmbar{\overline{\textit{HM}}_\bullet}
\newcommand\doo{\partial^o_o}
\newcommand\dos{\partial^o_s}
\newcommand\duo{\partial^u_o}
\newcommand\dus{\partial^u_s}
\newcommand\ess{\bar{\partial}^s_s}
\newcommand\esu{\bar{\partial}^s_u}
\newcommand\eus{\bar{\partial}^u_s}
\newcommand\euu{\bar{\partial}^u_u}
\newtheorem{theorem}{Theorem}[section]
\newtheorem{lemma}[theorem]{Lemma}
\newtheorem{corollary}[theorem]{Corollary}
\newtheorem{proposition}[theorem]{Proposition}
\theoremstyle{definition}
\newtheorem{definition}[theorem]{Definition}
\newtheorem{remark}[theorem]{Remark}
\newcommand\cv{\mathcal{V}}
\newcommand\ch{\mathcal{H}}
\newcommand\ce{\mathcal{E}}
\newcommand\ve{\varepsilon}
\newcommand\cp{\mathcal{P}}
\newcommand\ca{\mathcal{A}}
\newcommand\tcv{\widetilde{\mathcal{V}}}
\newcommand\tcg{\widetilde{\mathcal{G}}}
\newcommand\tce{\widetilde{\mathcal{E}}}
\newcommand\tch{\widetilde{\mathcal{H}}}
\newcommand\tve{\tilde\varepsilon}
\newcommand\tcp{\widetilde{\mathcal{P}}}
\newcommand\tca{\widetilde{\mathcal{A}}}
\newcommand\tr{\tilde{\rho}}
\newcommand\br{\bar{\rho}}
\newcommand\bcv{\overline{\mathcal{V}}}
\newcommand\bce{\overline{\mathcal{E}}}
\newcommand\bch{\overline{\mathcal{H}}}
\newcommand\bve{\bar\varepsilon}
\newcommand\bcp{\overline{\mathcal{P}}}
\newcommand\bca{\overline{\mathcal{A}}}
\title[The combinatorics of {M}orse theory with boundary]{The combinatorics of {M}orse theory with boundary}
\author{Jonathan M.\ Bloom}
\thanks{The author was supported by NSF grant DMS-1104032.}
\begin{document}

\begin{abstract}
We prove several combinatorial results on path algebras over discrete structures related to directed graphs.  These results are motivated by Morse theory on a manifold with boundary and, more generally, by Floer theory on a configuration space with boundary.  Their purpose is to organize cobordism relationships among moduli spaces in order to define new algebraic invariants.  We discuss applications to the Morse and Fukaya categories, and to work with John Baldwin on a bordered monopole Floer theory.
\end{abstract}

\maketitle

\tableofcontents

\section{Introduction}

In this article, we prove several combinatorial results on path algebras over discrete structures related to directed graphs.  These results are motivated by Morse theory on a manifold with boundary and, more generally, by Floer theory on a configuration space with boundary.  Their purpose is to organize cobordism relationships among moduli spaces in order to define new algebraic invariants.  Sections \ref{sec:higraphs} through \ref{sec:bigraphs} give a fully self-contained exposition of the mathematical content.  The Appendix briskly reviews Morse homology on a manifold with boundary, as defined by Kronheimer and Mrowka as the model for monopole Floer homology \cite{km}.  We also explain there the connection to cell structures and CW-homology.  In the following three subsections of the introduction, we indicate the flavor of the combinatorics, its relation to Morse theory, and applications to Morse and Floer theory with boundary.

\subsection{Counting paths}

Let $\mathbb{F}$ be the 2-element field.  To a transitive digraph $G$, one can associate a differential graded algebra $\mathcal{A}$ over $\mathbb{F}$, the {\em path DGA} of $G$.  The $\mathbb{F}$-basis of $\mathcal{A}$ is given by paths in $G$, the product by concatenation, and the grading by path length.  The differential $\delta$ of the edge $(v_1, v_2)$ is the sum of all length-two paths from $v_1$ to $v_2$, and $\delta$ extends to paths by the Leibniz rule.  The identity $\delta^2 = 0$ follows from the fact that a sequence of three compatible edges can be concatenated in two orders: $(ab)c$ and $a(bc)$.  The element $D \in \mathcal{A}$ given by the sum of all the edges of $G$ satisfies the {\em structure equation}
$$\delta D = D \circ D$$
as both sides equal the sum of all length-two paths in $G$.  The digraph corresponding to Morse homology on a closed manifold is shown in Figure \ref{fig:morseInt}.

In a directed hypergraph, the source and target of an edge are each subsets of the set of vertices.  In Section \ref{sec:higraphs}, we introduce a class of directed hypergraphs called {\em higraphs} on which the path DGA is well-defined and the structure equation continues to hold.  The higraph underlying the cup product is shown in Figure \ref{fig:muTwo}.

\begin{figure}[htp]
\centering
\includegraphics[width=156mm]{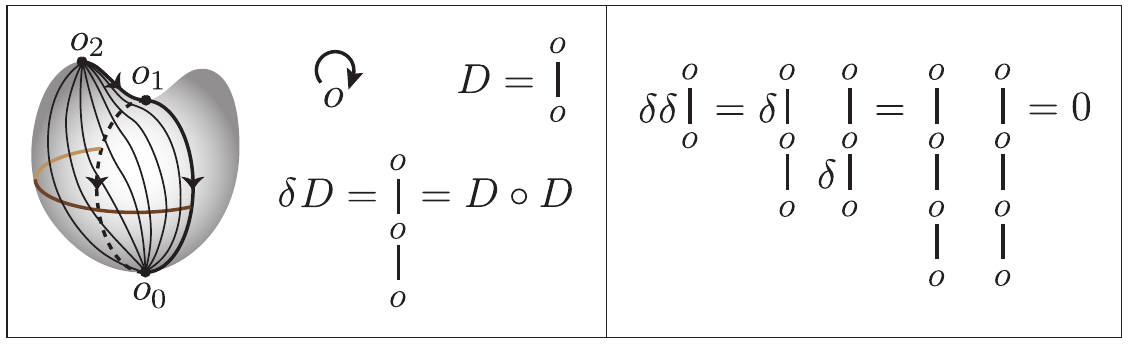}
\caption{The higraph (and digraph) consisting of a single vertex and a single edge corresponds to Morse homology on a closed manifold.  The vertex $o$ represents the Morse complex and the edge $D$ represents the Morse differential.  The differential $\delta$ on the path DGA encodes the key cobordism relation between trajectory spaces illustrated on the surface at left.  The structure equation is immediate.  At right, the identity $\delta^2 = 0$ encodes the fact that each moduli space of twice-broken trajectories bounds two moduli spaces of once-broken trajectories.}
\label{fig:morseInt}
\end{figure}

\begin{figure}[htp]
\centering
\includegraphics[width=156mm]{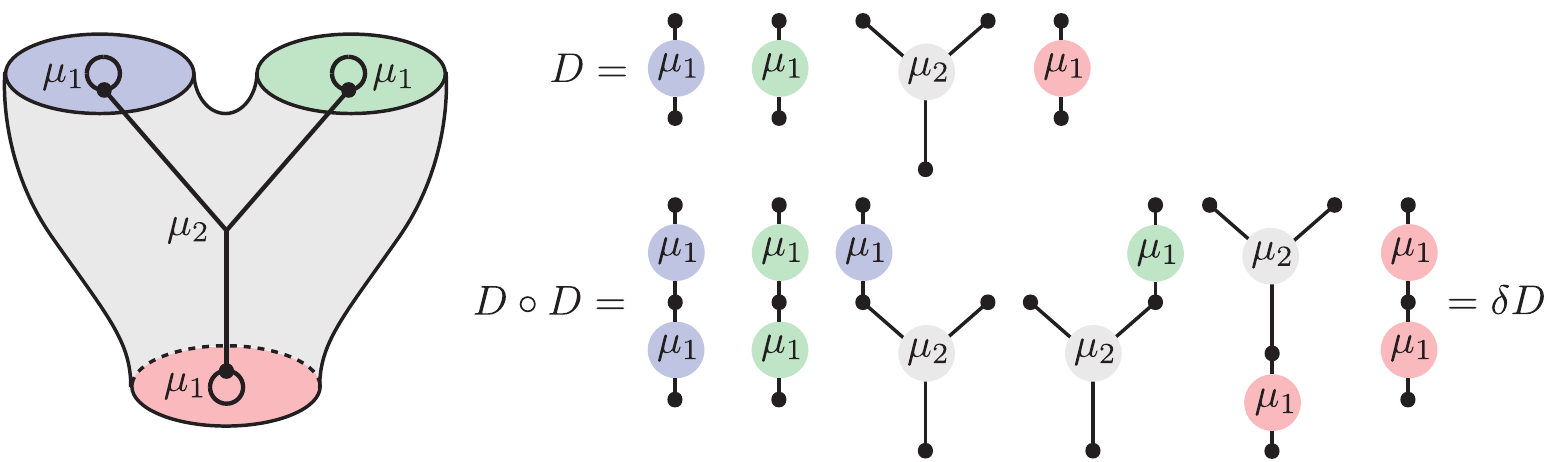}
\caption{The higraph at left consists of three vertices and four edges.  The three edges labeled by $\mu_1$ are loops and the fourth edge $\mu_2$ runs from the two top vertices to the bottom vertex.  In the context of the Morse category discussed in Section \ref{sec:floer}, the structure equation implies that $\mu_1$ is a differential and $\mu_2$ is a chain map,  inducing the cup product structure.}
\label{fig:muTwo}
\end{figure}

In Sections \ref{sec:abstractpaths} and \ref{sec:bigraphs} we extend these results to our main character: the {\em bigraph} of a higraph (the ``b'' stands for blow-up and boundary).  Each higraph vertex is covered by three bigraph vertices called interior ($o$), stable ($s$), and unstable ($u$), and each higraph edge is covered by a number of bigraph edges called interior and boundary.  The grading on the path DGA $\mathcal{\widetilde{A}}$ of a bigraph is defined in terms of weight rather than length.  The weight of an interior edge is 1 and the weight of a boundary edge depends on the number and types of vertices in its source and target (Definition \ref{def:weight}).  The differential $\tilde \delta$ on a weight $w$ interior edge (resp., boundary edge) is the sum all weight $w+1$ paths (resp., boundary paths) with the same source and target.  Our first main results (Theorem \ref{thm:deltatildesq}) verifies that $\tilde \delta^2 = 0$.  A bigraph is {\em tree-like} if every edge has singleton target and {\em rake-like} if every edge has singleton source.  For tree-like bigraphs, our second result (Theorem \ref{thm:bistructure}) states that the element $\tilde D \in \mathcal{\widetilde A}$ given by the sum of all weight 1 paths between interior and unstable vertices satisfies the structure equation
$$\tilde\delta \tilde D = \tilde D \circ \tilde D.$$
Dually, for rake-like bigraphs, the structure equation holds when $\tilde D$ is given by the sum of all weight 1 paths between interior and stable vertices.

The bigraph covering the higraph in Figure \ref{fig:morseInt} is shown at the top of Figure \ref{fig:morsePaths}.  The reader may look to the Appendix, particularly Figure \ref{fig:morseOps}, to see how this fundamental example encodes the structure of Morse homology on a manifold with boundary.  There are three vertices, four interior edges in black, and four boundary edges in red.  The dashed red edge from $s$ to $u$ has weight $0$ and the doubled red edge from $u$ to $s$ has weight $2$.  The other six edges have weight 1.  The element $D$ is the sum of all weight 1 paths starting or ending at $o$ or $s$. The middle portion of Figure \ref{fig:morsePaths} shows the value of the differential $\tilde\delta$ on each edge, together with a surface illustrating the cobordism relation between trajectory spaces encoded by $\tilde\delta(o,o)$.  The bottom portion depicts the cancellation of three pairs of paths in $\tilde\delta \tilde D$ which do not appear in $\tilde D \circ \tilde D$.  The greyscale surfaces along the bottom represent boundary components of 3-manifolds, in order to indicate the relationship of this cancellation to Morse theory.  The families of red trajectories lie on the boundary while the families of black trajectories flow into the blue interior.  The cancellation reflects the fact that each highlighted broken trajectory arises as the end of a one-parameter family of (broken) trajectories in two ways.

The bigraph in Figure \ref{fig:pantsgraph} covers the higraph in Figure \ref{fig:muTwo}.  Now $\tilde \delta \tilde D$ expands out to 197 paths, 62 of which cancel in pairs, leaving the 135 terms of $\tilde D \circ \tilde D$.  A careful analysis of this cancellation phenomenon underlies the proof of Theorem \ref{thm:bistructure}.

\begin{figure}[htp]
\centering
\includegraphics[width=156mm]{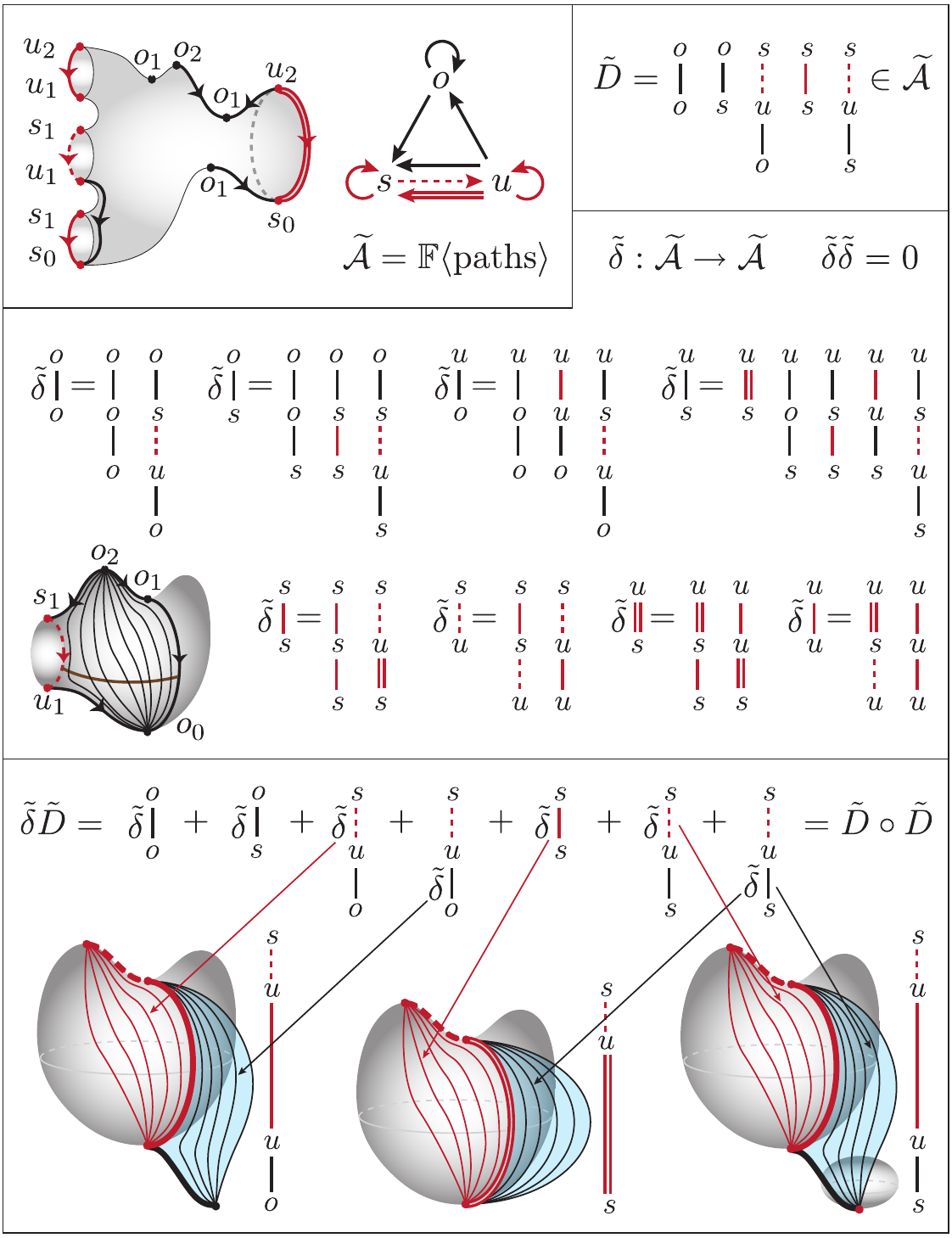}
\caption{The path DGA of the bigraph representing Morse homology on a manifold with boundary (compare with Figure \ref{fig:morseOps}).}
\label{fig:morsePaths}
\end{figure}

\subsection{The view from Morse theory}
The Morse homology package extracts algebraic invariants of smooth manifolds from cobordism relationships between moduli spaces of gradient trajectories.  The fundamental cobordism relation takes the schematic form
\begin{align}
\label{schematic}
\partial X  = X \circ X,
\end{align}
in which the boundary of one moduli space is identified with products of others moduli spaces.  More precisely, we have
\begin{align}
\label{xab}
\partial \overline{X}^a_b = \bigcup_c \ X^a_c \times X^c_b.
\end{align}
where $X^a_b$ is the moduli space of unparameterized trajectories from $a$ to $b$, and $\overline{X}^a_b$ denotes a compactification of $X^a_b$. One defines an endomorphism $D$ on the vector space
generated by the set of critical points by setting
$$\langle D a, b \rangle = |X^a_b|.$$
One may also define an endomorphism $\delta D$ by setting
$$\langle (\delta D) a, b \rangle = |\partial \overline{X}^a_b|.$$
The cobordism relation \eqref{xab} implies the {\em structure equation}
\begin{align}
\label{structure}
\delta D = D \circ D,
\end{align}
a precise algebraic translation of the schematic \eqref{schematic}.  Furthermore, $\delta D$ is the zero map, since $\partial \overline{X}^a_b$ is finite if and only if it is the boundary of a compact 1-manifold and hence an even number of points.  We conclude from the structure equation that $D$ is indeed a differential.  Note that the same framework is used to prove that Morse homology is invariant and functorial.
Going one step further, one may define an endomorphism $\delta\delta D$ by setting
$$\langle (\delta\delta D) a, b \rangle = |\partial(\partial \overline{X}^a_b)|.$$
This is the zero map for a different topological reason: the compactification of $X^a_b$ may be given the structure of a smooth manifold with corners.

\begin{quotation}
{\em This viewpoint naturally extends to the case of a manifold with boundary.}
\end{quotation}

For a manifold with boundary, the space $X$ is itself a union of products of moduli spaces of trajectories (that is, $X$ is composed of several types of possibly-broken trajectories).  The space $\overline{X}$ is built from the compactifications of the factors, its boundary given by the Leibniz rule.
The structure equation continues to hold, although now after the cancellation of several pairs of terms in $\delta D$ which do not appear in $D \circ D$ as illustrated in Figure \ref{fig:morsePaths}.  And the maps $\delta D$ and $\delta\delta D$ still vanish.  We can summarize the situation as follows: without boundary, Morse homology depends on geometric analysis and a bit of topology;  with boundary, Morse homology depends on (more involved) geometric analysis, a bit of topology, and the simplest case of the combinatorics that is comprehensively developed herein.

\begin{remark}
Kronheimer and Mrowka study Morse theory with boundary in the context of monopole Floer homology.  They do not prove that $\overline{X}$ is a manifold with corners, but rather that it is stratified by smooth manifolds and has an even number of ends in the 1-dimensional case.  The example in Figure \ref{fig:eightPaths} indicates that $\overline{X}$ is not a smooth manifold with corners when its dimension is three or more.  On the other hand, Proposition \ref{prop:transpose} shows that $|\partial(\partial \overline{X})| = 0$, which corresponds to the identity $\delta^2 = 0$ in the path DGA setting and suggests that $\overline{X}$ may be a manifold with corners in the 2-dimensional case.
\end{remark}

\subsection{Morse and Floer theory with boundary}
\label{sec:floer}
Morse homology is an approach to analyzing the topology of a smooth manifold by studying the gradient flow of a generic smooth functional.  Floer theory generalizes this approach to infinite-dimensional configuration spaces arising in symplectic geometry and gauge theory.  Two important examples, Lagrangian intersection Floer homology and monopole Floer homology, may be placed in parallel with finite-dimensional Morse homology as follows:
\begin{itemize}
\item The {\em Morse complex} $\mathit{C}(f)$ of a smooth function $f$ on a closed manifold is generated by the critical points of $f$; the differential counts gradient trajectories between critical points.
\item The {\em Langrangian Floer complex} $\mathit{CF}(L_0,L_1)$ of two Lagrangian submanifolds of a symplectic manifold $X$ is generated by the points of $L_0 \cap L_1$; the differential counts pseudo-holomorphic bigons with edge $i$ on $L_i$.  Here the functional is the symplectic action functional on the space of paths from $L_0$ to $L_1$ \cite{fooo}.
\item The {\em monopole Floer complex} $\widehat{\mathit{C}}(Y)$ of a closed 3-manifold $Y$ is generated by solutions (called monopoles) to the 3-dimensional Seiberg-Witten equations on $Y$; the differential counts solutions to the 4-dimensional Seiberg-Witten equations on $Y \times \mathbb{R}$.  Here the functional is the Chern-Simons-Dirac functional on the configuration space $\mathcal{B}(Y)$ consisting of pairs of certain connections and spinors over $Y$ modulo gauge \cite{km}.
\end{itemize}
We caution that these rough descriptions suppresses auxiliary data (such as metrics, perturbations, and almost-complex structures) and ignore bubbling phenomena that may prevent the Langrangian Floer complex from being well-defined.  Note also that the configuration space $\mathcal{B}(Y)$ has a boundary consisting of reducible configurations, which is why there are three versions of monopole Floer homology.  The correspondence with Morse homology is reflected by the exact sequences
\begin{align*}
\cdots \longrightarrow H_*(\partial M) \longrightarrow H_*(M)  \longrightarrow H_*(M, \partial M) \longrightarrow \cdots  \\
\cdots \longrightarrow \hmbar(Y) \longrightarrow \hmb(Y) \longrightarrow \hmfrom(Y)  \longrightarrow \cdots.
\end{align*}

Singular cohomology admits an associative product structure, the cup product, which may be defined Morse-theoretically using gradient trees (or equivalently, triple intersections of stable and unstable manifolds with respect to several Morse functions).
Fukaya recognized that deeper multiplicative structure is naturally encoded by an $A_\infty$-category, which determines the Massey products and rational homotopy type \cite{fukOh}.
\begin{itemize}
\item The objects of the {\em Morse category} of a closed manifold are smooth functions and $\mathrm{Mor}(f_0,f_1) = \mathit{C}(f_0 - f_1)$.  The multiplication map $$\mu_k : \mathit{C}(f_0 - f_1) \otimes \cdots \otimes \mathit{C}(f_{k-1} - f_k) \to \mathit{C}(f_0 - f_k)$$ counts gradient trees with $k$ inputs and $1$ output.
\end{itemize}
Here $\mu_1$ is the Morse differential and $\mu_2$ is the chain map defined by
$$\langle \mu_2(a\otimes b),c \rangle = |\{x \in M \, : \, \lim_{t \to \infty} \phi^{f_0 - f_1}_t(x) = a, \ \lim_{t \to \infty} \phi^{f_1 - f_2}_t(x) = b, \ \lim_{t \to \infty} \phi^{f_0 - f_2}_t(x) = c\}|$$
where $\phi^f_t: M \to M$ denotes the flow induced by $\nabla f$.  The map $\mu_3$ provides a homotopy between $\mu_2(\mu_2(\cdot,\cdot), \cdot)$ and $\mu_2(\cdot, \mu_2(\cdot,\cdot))$.  In particular, $\mu_2$ induces an associative product on homology, dual to the cup product.  See \cite{seid} for background on $A_\infty$-categories.

One application of our combinatorial results is to extend the definition of the Morse category to manifolds with boundary.  For the appropriate sequence of higraphs, the path DGAs of bigraphs specify the higher multiplications, and the structure equation is equivalent to the $A_\infty$-relations.  For instance, the map $\mu_2$ is defined to count broken gradient trees as specified in Figure \ref{fig:pantsgraph}.  The higraph for $\mu_3$ is shown in Figure \ref{fig:muThree}, and the general case is similar.

Fukaya introduced the Morse category as a model for his eponymous category, which encodes the higher multiplicative structure of Langrangian intersection Floer homology.
\begin{itemize}
\item The objects of the {\em Fukaya category} of a symplectic manifold $X$ are Lagrangian submanifolds and $\mathrm{Mor}(L_0,L_1) = \mathit{CF}(L_0,L_1)$.  The multiplication map $$\mu^\text{Lag}_k : \mathit{CF}(L_0, L_1) \otimes \cdots \otimes \mathit{CF}(L_{k-1}, L_k) \to \mathit{CF}(L_0, L_k)$$ counts pseudo-holomorphic $(k+1)$-gons with edge $i$ on $L_i$.
\end{itemize}
The connection with the Morse category goes as follows.  Given a smooth function $f$ on $M$, the graph of $df$ is a Lagrangian in $T^*M$ with the standard symplectic structure.  The Fukaya category generated by such Lagrangians is $A_\infty$-equivalent to the Morse category of $M$, which implies the Arnold conjecture for the zero-section \cite{fukOh}, \cite{abu1}.

The Fukaya category is a central algebraic structure in symplectic geometry and mirror symmetry.  It is also notoriously difficult to define or compute rigorously \cite{fooo}, \cite{seid}.  See \cite{lekPer} for a state-of-the-art computation: the Fukaya category of the 2-dimensional torus. A divide-and-conquer approach to computation might go as follows: chop the symplectic manifold into simpler pieces and reassemble its Fukaya category by algebraically compiling the data of some notion of Fukaya category for each piece.  Akaho has taken a step in this direction, defining the Floer complex $\mathit{CF}(L_1,L_2)$ of two Lagrangians in an open symplectic manifold with concave end \cite{akaho}.  A slice of the end inherits a contact structure and intersects the Lagrangians in Legendrians.  The complex  $\mathit{CF}(L_1,L_2)$ is generated by both points of $L_1 \cap L_2$ and Reeb chords between the Legendrians.
The Reeb chords play the role of boundary critical points and the differential takes the same form as the Morse differential on a manifold with boundary.   As such, our bigraph framework should specify the higher multiplications needed to define the Fukaya category of an open symplectic manifold with concave end as well.

Our primary application, in joint work with John Baldwin, is to define a gauge-theoretic analogue of the Fukaya category using monopole Floer theory.  This monopole category combines older ideas of Segal, Donaldson, and Fukaya \cite{fuk1} with the comprehensive analysis of Kronheimer and Mrowka \cite{km}.
\begin{itemize}
\item The objects of the {\em monopole category} of a closed surface $\Sigma$ are 3-manifolds with boundary parameterized by $\Sigma$, and $\mathrm{Mor}(Y_0,Y_1) = \widehat{\mathit{C}}(Y_0 \cup_\Sigma -Y_1)$.  The map $$\mu^\text{mon}_k : \widehat{\mathit{C}}(Y_0 \cup -Y_1) \otimes \cdots \otimes \widehat{\mathit{C}}(Y_{k-1} \cup -Y_k) \to \widehat{\mathit{C}}(Y_0 \cup -Y_k)$$ counts monopoles on a 4-dimensional cobordism $$W : (Y_0 \cup -Y_1) \sqcup \cdots \sqcup (Y_{k-1} \cup -Y_k) \to Y_0 \cup -Y_k$$ over a family of metrics parameterized by the $(k-2)$-dimensional associahedron.
\end{itemize}
The cobordism $W$ is constructed by gluing $\partial Y_i \times [0,1] \subset Y_i \times [0,1]$ to $\Sigma \times e_i \subset \Sigma \times \triangle$ for $0 \leq i \leq k$, where $\triangle$ is a polygonal region bounded by $k+1$ line segments alternating with $k+1$ arcs $e_i$.  The case $k=2$ is depicted at right in Figure \ref{fig:compMor}.  In general, the family of metrics interpolates between all ways of cutting $W$ along disjoint hypersurfaces $Y_i \cup -Y_j$ corresponding to chords in $\triangle$ between non-adjacent arcs $e_i$.  Figure \ref{fig:muThree} illustrates the interval of metrics in the case $k = 3$ from a less rigid perspective.

\begin{figure}[tbp]
\centering
\includegraphics[width=156mm]{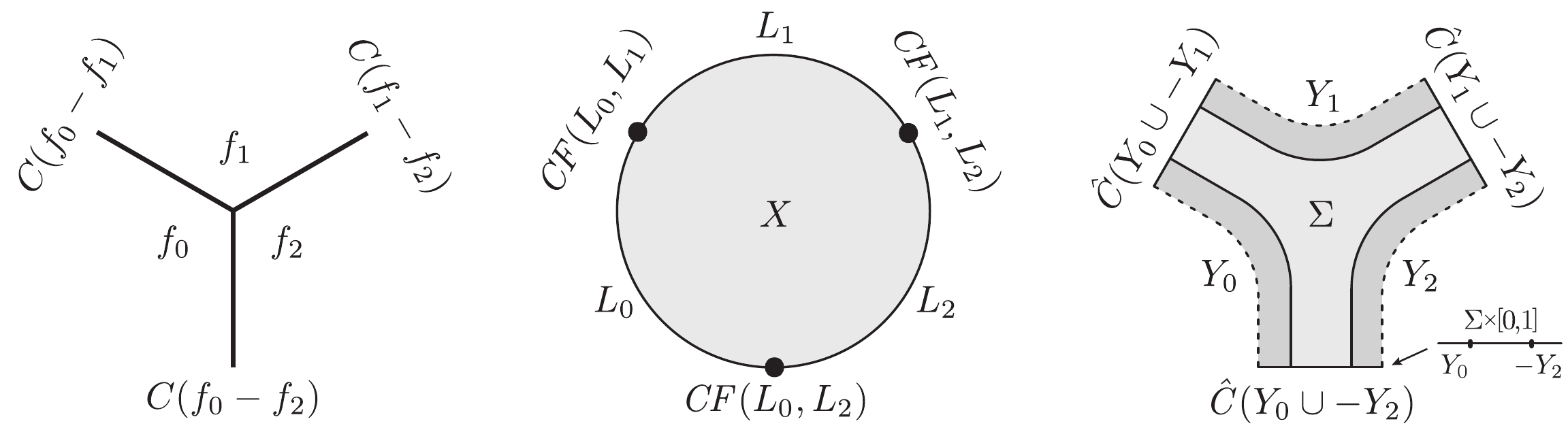}
\caption{The $\mu_2$ multiplication map in the Morse, Fukaya, and monopole $A_\infty$-categories counts gradient trees, pseudo-holomorphic triangles, and monopoles on a $4$-dimensional cobordism, respectively.  On homology, this map induces composition of morphisms in an ordinary category.}
\label{fig:compMor}
\end{figure}

\begin{figure}[htp]
\centering
\includegraphics[width=140mm]{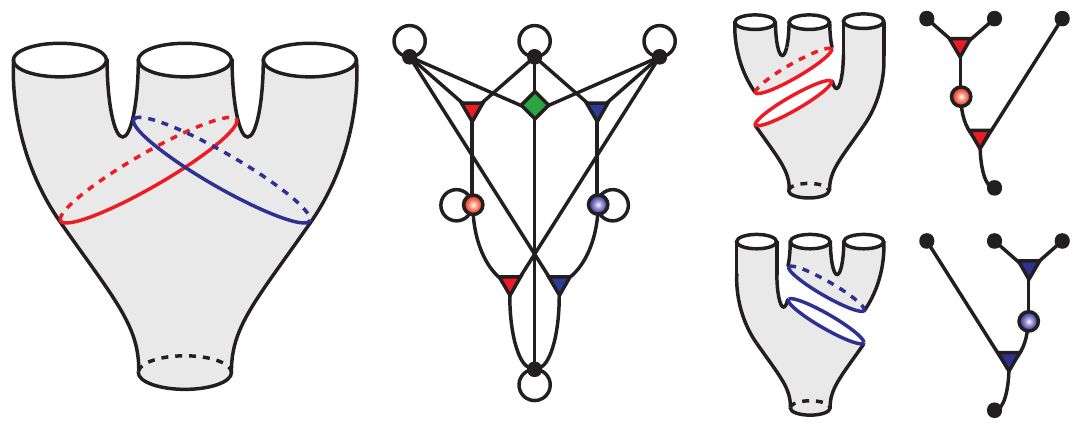}
\caption{The $\mu_3$ multiplication map is represented by the green edge from the three top vertices to the bottom vertex.  The full higraph contains 6 vertices and 11 edges. The compositions $\mu_2(\mu_2(\cdot,\cdot), \cdot)$ and $\mu_2(\cdot, \mu_2(\cdot,\cdot))$ are each represented by paths of length 2 at right.  Due to boundary (reducibles), the terms of the monopole maps $\mu^\text{mon}_1$, $\mu^\text{mon}_2$, $\mu^\text{mon}_3$ are encoded by the corresponding bigraph.  Roughly, $\mu^\text{mon}_3$ counts monopoles on a 4-dimensional cobordism (at left) over a family of metrics parameterized by the 1-dimensional associahedron: the interval $[-1,1]$.  The metric stretches normal to the red (resp., blue) hypersurface as the parameter goes to $-1$ (resp., 1).  The degenerate metrics at $\pm 1$ correspond to composite cobordisms (at right).  The structure equation implies the $A_\infty$-relation for $\mu^\text{mon}_3$ (not shown).}
\label{fig:muThree}
\end{figure}

In light of the isomorphism between monopole and Heegaard Floer homology (see \cite{honda1} and \cite{taubes1}), it is natural to conjecture that the monopole category of a surface $\Sigma$ of genus $g$ is $A_\infty$-equivalent to the subcategory of the Fukaya category of $\mathrm{Sym}^g(\Sigma)$ generated by Heegaard tori. Furthermore, the monopole category fits into the framework of a {\em bordered monopole Floer theory}, a form of extended TQFT.  Roughly, the bordered monopole Floer package is a 2-functor sending surfaces to $A_\infty$-categories, 3-dimensional cobordisms to $A_\infty$-functors, and 4-dimensional cobordisms with corners to $A_\infty$-natural transformations.  In dimensions 2 and 3, this theory is analogous to the version of bordered Heegaard Floer theory developed by Lekili and Perutz \cite{lekPer2} (which in turn is related to the original version \cite{lot1}).  In our case, for each genus, the monopole category is split-generated by an explicit, finite set of parameterized handlebodies through the surgery exact triangle, and the homology algebra is known.  We hope that this theory will point toward an axiomatization of Floer homology in low-dimensional topology.

The preceding applications utilize the sequence of higraphs arising from associahedra encoding homotopy associativity.  Other higraphs are useful as well.  The link surgery spectral sequence is most naturally defined using permutohedra encoding homotopy commutativity; these higraphs have the vertex set of a hypercube with an edge between every ordered pair of vertices \cite{jmb1}.  Baldwin and the author's constructions in bordered monopole Floer theory make use of more general polytopes called graph associahedra \cite{d}, including cyclohedra related to Hochschild homology.  More basically, a tree higraph (one non-loop edge with singleton target) leads to an $\hmfrom$ map for a connected cobordism with disconnected source as in Figure \ref{fig:pantsgraph}.  Dually, a rake higraph (one non-loop edge with singleton source) leads to an $\hmb$ map for a connected cobordism with disconnected target.  As a final example, we have appended the higraph organizing the proof of the monopole surgery exact triangle as Figure \ref{fig:exactTriangle}.  To cover these and future cases, we have aimed for a definition of higraphs that is as general as possible while still satisfying a unified combinatorial theory.

\section{Higraphs }
\label{sec:higraphs}

\begin{definition}
A {\em directed graph}, or {\em digraph}, $G = (V, E)$ consists of a vertex set $V$ and an edge set $E \subset V \times V$.  The edge $e = (v_1,v_2)$ has {\em source} $s(e) = v_1$ and {\em target} $t(e) = v_2$.

A {\em directed hypergraph} $\ch = (\cv, \ce)$ consists of a vertex set $\cv$ and an edge set $\ce \subset 2^\cv \times 2^\cv$.  The edge $\ve = (I,J)$ has {\em source} $s(\ve) = I \subset \cv$ and {\em target} $t(\ve) = J \subset \cv$.
\end{definition}

We reserve the term higraph for a special kind of directed hypergraph.

\begin{definition}
A {\em higraph} $\ch$ is a directed hypergraph $(\cv, \ce)$ such that the following properties hold for all edges $\ve, \ve' \in \ce$.

\noindent \textsc{Transitive:} If $t(\ve) \cap s(\ve')$ is non-empty, then $t(\ve) \cap s(\ve') = \{K\}$ for some $K \in \cv$ and
\begin{align}
\label{glued}
(s(\ve) \cup (s(\ve') - \{K\}), (t(\ve) - \{K\}) \cup t(\ve')) \in \ce
\end{align}
\hspace{21.5mm} where each of the above unions is of disjoint sets.

\noindent \textsc{Acyclic:} \ \ \ \ If $t(\ve) \cap s(\ve')$ and $t(\ve') \cap s(\ve)$ are both non-empty, then $\ve = \ve' = (\{K\}, \{K\})$ for

\hspace{17.5mm} some $K \in \cv$.
\end{definition}

An edge of the form $(\{K\}, \{K\})$ is called a {\em loop}.  Transitivity ensures that adjacent edges $\ve$ and $\ve'$ may be glued to form a single edge \eqref{glued} in a well-defined manner.  Acyclicity ensures that the only edges with overlapping source and target are loops, and that loops cannot result from gluing distinct adjacent edges.

\begin{figure}[htp]
\centering
\includegraphics[width=156mm]{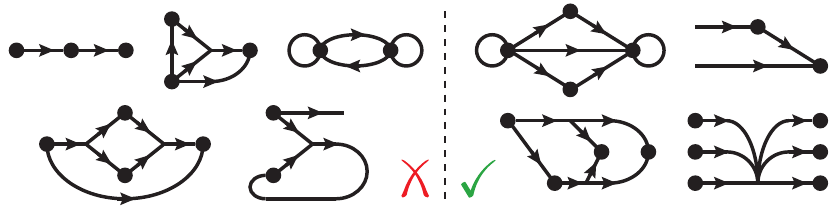}
\caption{Of the nine directed hypergraphs above, only the four at right are higraphs.  All vertices are shown as black dots.  The bottom-right higraph has only one edge; the source and target of this edge each consist of three vertices.  Two edges in the top-right higraph have empty source; gluing the upper two edges results in the lower edge.}
\label{fig:higraphRules}
\end{figure}

\subsection{Paths}

We recall the notion of a directed tree.

\begin{definition}
A {\em directed tree} $T$ is a digraph which is acyclic as an undirected graph.  For clarity, we will refer to the vertices and edges of $T$ as {\em nodes} and {\em arrows}, respectively.
\end{definition}

A path in a higraph has the structure of a directed tree -- the nodes index the edges of the path, while the arrows index the breaks between edges.

\begin{definition}
\label{def:path}
A {\em path} $\rho = (T, \rho)$ in a higraph $\ch = (\cv, \ce)$ consists of a non-empty, directed tree $T = (V, E)$ together with a map $\rho: V \to \ce$ such that:
\begin{enumerate}
\item The map $\rho$ induces a well-defined map $\rho: E \to \cv$ such that $$\{\rho(e)\} = t(\rho(s(e))) \cap s(\rho(t(e))).$$
\item If distinct arrows $e$ and $e'$ have the same source or the same target, then $\rho(e) \neq \rho(e')$.
\end{enumerate}
Paths $\rho$ and $\rho'$ are {\em isomorphic} if there is a digraph isomorphism $f: T \to T'$ of the associated directed trees such that $\rho = \rho' f$.
\end{definition}

In fact, the automorphism group of a path is trivial.  This is clear if $\rho$ covers a loop.  If $\rho$ does not cover a loop, then transitivity and acyclicity imply that the leaves of $T$ are mapped to distinct edges.  So in this case, any automorphism of $\rho$ must fix the leaves of $T$, and is therefore the identity.  Thus, paths are unique up to unique isomorphism, and we consider isomorphic paths to be the same.

\begin{definition}
Let $\cp$ denote the set of paths in $\ch$.  Given a path $\rho$, the {\em length} of $\rho$ is the number of edges it contains, given by $l(\rho) = |V|.$  We refer to the arrows of $T$ as {\em breaks}. The number of breaks in $\rho$ is given by $b(\rho) = |E| = l(\rho) - 1.$
The {\em source} of a path $\rho$ is the subset of $\cv$ given by
$$s(\rho) = \bigcup_{v \in V} \left( s(\rho(v)) - \{\rho(e) \, | \, t(e) = v\} \right).$$
The {\em target} of a path $\rho$ is the subset of $\cv$ given by
$$t(\rho) = \bigcup_{v \in V} \left( t(\rho(v)) - \{\rho(e) \, | \, s(e) = v\} \right).$$
Transitivity implies that the union defining $s(\rho)$ is of disjoint sets, and similarly for $t(\rho)$.  Thus each vertex $K \in s(\rho)$ is {\em sourced} by a unique node $v \in V$.  Similarly, each vertex $K \in t(\rho)$ is {\em targeted} by a unique node $v \in V$.
\end{definition}

\subsection{Path operations}

We next define several ways to combine or modify paths.  {\em Concatenation} joins two compatible paths together.  {\em Swapping} cuts out a subpath of a path and replaces it with another compatible path.  {\em Refinement} and {\em gluing} are special cases of swapping.  Refinement expands a single edge to a compatible path, while gluing contracts a subpath to a single edge.

\subsubsection{Concatenation}

\begin{definition}
The partially-defined {\em concatenation} map $c: \cp \times \cp \dashrightarrow \cp$ joins compatible paths together.  Consider paths $\rho_1$ and $\rho_2$ with associated trees $T_1 = (V_1, E_1)$ and $T_2 = (V_2, E_2)$.  The path $\rho = c(\rho_1,\rho_2)$ is defined precisely when $t(\rho_1) \cap s(\rho_2)$ is non-empty, in which case the intersection contains a single vertex $K \in \cv$ targeted by $v_1 \in V_1$ and sourced by $v_2 \in V_2$.  The directed tree $T = (V, E)$ associated to $\rho$ is given by $$V = V_1 \cup V_2$$ and $$E = E_1 \cup E_2 \cup \{(v_1, v_2)\}.$$  The map $\rho : V \to \ce$ is given by $\rho_1 \sqcup \rho_2$.  The induced map $\rho : E \to \cv$ sends $(v_1,v_2)$ to $K$.
\end{definition}

\subsubsection{Swapping}

Let $\mathcal{Q} = \{(\rho, T') \, | \, \rho = (T, \rho) \in \cp, \ T' \text { is a non-empty subtree of } T\}.$

\begin{definition}
The partially-defined {\em swapping map} $w: \mathcal{Q} \times \cp \dashrightarrow \cp$ replaces a subpath of a path with a compatible path.  Given a path $\rho_1 = (T_1, \rho_1)$ and non-empty subtree $T' = (V', E')$ of $T_1$, we obtain a subpath $\rho' = (T', \rho')$ by setting $\rho' = \rho_1|_{V'}$.  The path $\rho_2 = w((\rho_1, T'), \rho'')$ is defined precisely when $s(\rho') = s(\rho'')$ and $t(\rho') = t(\rho'')$.  In this case, the associated tree $T_2 = (V_2, E_2)$ has node set given by
$$V_2 = (V_1 - V') \cup V''.$$
To describe the arrow set $E_2$, note that for each arrow $e \in (V_1 - V') \times V'$, the vertex $\rho'(t(e)) \in s(\rho'')$ is sourced by a unique node $t''(e)$ of $T''$.  Similarly, for each arrow $e \in V' \times (V_1 - V')$, the vertex $\rho'(s(e)) \in t(\rho'')$ is targeted by a unique node $s''(e)$ of $T''$.  We set
$$E_2 = \left(E_1 \cap \left((V_1 - V') \times (V_1 - V')\right)\right) \cup E_s \cup E_t \cup E'' \subset V_2 \times V_2$$
where
$$E_s = \{(s(e),t''(e)) \, | \, e \in (V_1 - V') \times V' \}$$
and
$$E_t = \{(s''(e),t(e)) \, | \, e \in V' \times (V_1 - V') \}.$$
The map $\rho : V_2 \to \ce$ is induced by $\rho_1$ on $V_1 - V'$ and by $\rho''$ on $V''$.  Note that $\rho_2$ has the same source and target as the original path $\rho_1$.
\end{definition}

Since swapping preserves source and target, swapping and concatenation commute in the following sense.  Fix paths $\rho_1$, $\rho_2$, and $\rho''$, and let $T'_1$ and $T'_2$ be non-empty subtrees of $T_1$ and $T_2$, respectively.  Then 
\begin{align}
\label{eqn:rc1}
w((c(\rho_1,\rho_2), T'_1), \rho'') &= c(w((\rho_1, T'_1), \rho''), \rho_2) \\
\label{eqn:rc2}
w((c(\rho_1,\rho_2), T'_2), \rho'') &= c(\rho_1, w((\rho_2, T'_2), \rho''))
\end{align}
where the left-hand side of each equation is defined if and only if the right-hand side is defined.

Of course, if we swap out a subpath and then swap it back in, we end up where we started.  More precisely, if we define the map $W : \mathcal{Q} \times \mathcal{P} \to \mathcal{Q} \times \mathcal{P}$ by
\begin{align}
\label{Winverse}
W((\rho_1, T'), \rho'') = ((w((\rho_1,T'), \rho''), T''), \rho'),
\end{align}
then $W \circ W$ is the identity map.  We are most interested in the following two special cases of swapping, which are mutual inverses in the sense of \eqref{Winverse}.

\subsubsection{Refinement}

We first suppose that the subpath $\rho'$ consists of a single edge $\ve^* = \rho'(v^*)$.  Let $$\mathcal{R} = \{(\rho, v^*) \, | \, \rho = (T, \rho) \in \cp, \ v^* \text { is a vertex of } T\}.$$

\begin{definition}
The partially-defined {\em refinement map} $r : \mathcal{R} \times \mathcal{P} \dashrightarrow \mathcal{P}$ replaces a single edge $\ve^* = \rho_1(v^*)$ of a path $\rho_1$ with a compatible path $\rho''$:
$$r((\rho_1,v^*), \rho'') = w((\rho_1, v^*), \rho'').$$
\end{definition}

\subsubsection{Gluing}  We next suppose that the substituted path $\rho''$ consists of a single edge $\ve^*$.
\begin{definition}
The {\em gluing map} $q: \mathcal{Q} \to \mathcal{P}$ replaces a subpath $\rho'$ of a path $\rho_1$ with the single edge $\ve^* = (s(\rho'), t(\rho'))$:  
$$q(\rho_1,T') = w((\rho_1, T'), \ve^*).$$
Note that $\ve^*$ is indeed an edge of $\ch$ by transitivity.
\end{definition}

By definition, the tree $T_2$ underlying $\rho_2 = q(\rho_1,T')$ results from contracting the subtree $T'$ of $T_1$ to a single vertex $v^*$, with $\rho_2(v^*) = \ve^*$.  In particular, the projection map $\pi: V_1 \to V_2$ given by
\begin{align}
\label{pi}
\pi(v) =
\left\{\begin{array}{ll}
v &\text{if } v \in V_1 - V', \\
v^* &\text{if } v \in V'.
\end{array}\right.
\end{align}
induces a bijection $\pi: (E_1 - E') \to E_2$ between the arrows of $T_1$ that are outside $T'$ and the arrows of $T_2$.

\begin{definition}
Let $e$ be a break in a path $\rho$.  We use the notation $q(\rho, e)$ to denote the result of gluing the path $\rho$ along the one-arrow subtree induced by $e$.  Note that gluing a path at a break reduces length by 1.
\end{definition}

\begin{definition}
A {\em k-step gluing sequence} on a path $\rho_1$ is a $k$-tuple $(e_1, \dots, e_k)$
with $e_i$ a break on $\rho_i$ with $\rho_{i+1} = q(\rho_i, e_i)$ for $1 \leq i \leq k$.  We may write this as
$$\rho_1 \xrightarrow{e_1} \rho_2 \xrightarrow{e_2} \cdots \xrightarrow{e_k} \rho_{k+1}$$
to emphasize the paths along the way.  An isomorphism of $k$-step gluing sequences is a sequence of isomorphisms between the corresponding paths which identify the corresponding breaks.
\end{definition}

Gluing sequences are unique up to unique isomorphism, so we consider isomorphic gluing sequences to be the same.
The following lemma is established in Section \ref{sec:glueseqhi}.

\begin{lemma}
\label{lem:factorial1}
The number of $k$-step gluing sequences between any two fixed paths in a higraph is a multiple of $k!$.
\end{lemma}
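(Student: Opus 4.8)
The plan is to fix two paths $\rho$ and $\sigma$ in the higraph $\ch$ and show that the set $S_k(\rho,\sigma)$ of $k$-step gluing sequences from $\rho$ to $\sigma$ admits a free action of the symmetric group $\mathfrak{S}_k$, so that $|S_k(\rho,\sigma)|$ is a multiple of $k!$. The natural candidate for the action is to permute the order in which the $k$ breaks are glued. Given a gluing sequence $\rho = \rho_1 \xrightarrow{e_1} \rho_2 \xrightarrow{e_2} \cdots \xrightarrow{e_k} \rho_{k+1} = \sigma$, the underlying data can be repackaged intrinsically: the arrows $e_1, \dots, e_k$ determine, via the composite projections $\pi_i : V_i \to V_{i+1}$, a distinguished $k$-element set $B$ of breaks of $\rho_1$ together with a total order on $B$. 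Conversely, gluing along any subset of breaks of $\rho_1$ in any order produces a well-defined path, because gluing is a special case of swapping and swaps at disjoint (node-wise) locations commute; this is the content of relations \eqref{eqn:rc1}--\eqref{eqn:rc2} applied locally, or can be derived directly from the fact that $W \circ W = \mathrm{id}$ and that the contraction operation on trees is order-independent.

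So the key steps, in order, are: (1) Show that a $k$-step gluing sequence from $\rho$ to $\sigma$ is equivalent to the data of a $k$-element subset $B$ of breaks of $\rho$ and a total ordering of $B$, such that gluing $\rho$ along all of $B$ yields $\sigma$. The forward direction uses the projections $\pi_i$ of \eqref{pi} to transport each $e_i$ back to a break of $\rho_1$; the subtlety is that $e_i$ is a break of $\rho_i$, not of $\rho_1$, so one must check that the preimage under $\pi_{i-1} \circ \cdots \circ \pi_1$ is a single arrow of $T_1$ — this follows because each $\pi_j$ restricts to a bijection on arrows outside the contracted subtree, and the contracted subtrees here are always single arrows. (2) Show that the resulting path $\rho_{k+1}$ depends only on $B$, not on the chosen ordering: gluing along $e_i$ then $e_j$ (with $i \neq j$) equals gluing along $e_j$ then $e_i$, since the two one-arrow subtrees of $T_1$ are disjoint and contraction of disjoint subtrees commutes; one should also confirm the breaks remain ``valid'' (condition (2) of Definition \ref{def:path} is preserved, which is automatic since gluing only removes arrows and merges vertices). (3) Conclude: $\mathfrak{S}_k$ acts freely on $S_k(\rho,\sigma)$ by permuting the order of $B$, with orbits of size exactly $k!$ (freeness because distinct orderings of $B$ give distinct sequences — the intermediate paths $\rho_2, \dots, \rho_k$ record the partial glued subsets), hence $k! \mid |S_k(\rho,\sigma)|$.

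I expect step (1) to be the main obstacle — specifically, the careful bookkeeping needed to see that a break $e_i$ of the intermediate path $\rho_i$ pulls back to a genuine single break of the original path $\rho_1$, and that this pullback is compatible with further gluings. Once the bijection of step (1) is nailed down, steps (2) and (3) are comparatively formal: step (2) reduces to the elementary statement that contracting two disjoint arrows of a tree is order-independent, and step (3) is immediate from freeness of the relabeling action. One should be slightly careful about the edge case where $\rho$ covers a loop, but since loops have trivial automorphism group (as noted after Definition \ref{def:path}) and gluing a loop-segment behaves like any other gluing, the argument goes through uniformly; alternatively the loop case can be handled by the same subset-and-ordering description without modification.
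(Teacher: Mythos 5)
Your overall strategy is the same as the paper's: identify $k$-step gluing sequences with ordered $k$-tuples of distinct breaks on the initial path $\rho_1$ (via the projections $\pi$), show the terminal path depends only on the underlying set of breaks, and conclude that $\mathfrak{S}_k$ acts freely on the set of sequences between two fixed paths. Steps (1) and (3) are carried out correctly and match the paper's Proposition \ref{prop:biject1} and its use.

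There is, however, a gap in step (2), which you dismiss as formal. You assert that for distinct breaks $e_i \neq e_j$ of $\rho_1$ ``the two one-arrow subtrees of $T_1$ are disjoint,'' but this is false in general: two distinct arrows of a tree may be \emph{adjacent}, sharing a node, and then the induced one-arrow subtrees overlap. This adjacent case is exactly where the work lies, and it is the case to which the paper devotes most of its proof of Proposition \ref{prop:biject1}. In that case the two orders of contraction do still collapse the same two-arrow subtree $T'$ of $T_1$ (so the underlying trees agree), but the intermediate paths differ, and one must separately check that the higraph edge labelling the contracted node $v^*$ is the same for both orders. This is where the \textsc{Transitive} axiom enters: in either order the final glued edge is $(s(\rho'), t(\rho'))$ for the subpath $\rho'$ supported by $T'$, which is order-independent, and transitivity guarantees this pair is an edge of $\ch$. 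One should also check that the induced identification of trees carries the remaining transported breaks $e^i_3$ to $f^i_3$ for $i \geq 3$, so that the rest of the two sequences continue to agree. With the adjacent case handled this way, your argument closes up and coincides with the paper's.
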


\subsection{Path DGA}

The path algebra on a digraph is associative but not commutative.  The following generalization to higraphs is neither associative nor commutative in general.

\begin{definition}
The {\em path DGA} of the higraph $\ch$ over the field $\mathbb{F}$ consists of a graded algebra $\ca$ equipped with a differential $\delta : \ca \to \ca$.  The algebra $\ca$ has underlying vector space $\mathbb{F}\langle\cp\rangle$, with the subspace $\ca^k$ in grading $k$ spanned by the set $\cp^k$ of paths of length $k$.  Multiplication is bilinear and given on paths by $\rho_1\rho_2 = c(\rho_1, \rho_2)$ if defined, and $\rho_1\rho_2  = 0$ otherwise.

The differential $\delta$ is determined by the Leibniz rule and linearity once we specify its value on each edge $\ve \in \ce$ regarded as an element of $\ca$.  The value of $\delta$ on an edge $\ve$ equals the sum of all refinements of $\ve$ to a path of length 2.  In particular, the differential has degree $1$.
\end{definition}

Since a path $\rho$ may often be expressed as a fully-associated product of edges in several ways, we should take care that $\delta$ is well-defined.  This issue is resolved by noting that we may define $\delta$ directly on a path $\rho = ((V,E), \rho)$ by
\begin{align}
\label{eqn:delta}
\delta \rho = \sum_{v \in V} \sum_{\rho' \in \cp^2} r((\rho, v), \rho')
\end{align}
where $r((\rho, v), \rho')$ is defined to be zero on invalid input.  This definition is consistent with our earlier definition of $\delta \ve$, and the Leibniz rule now follows from \eqref{eqn:rc1} and \eqref{eqn:rc2}:
\begin{align*}
\delta (\rho_1\rho_2) &= \sum_{v \in V} \sum_{\rho' \in \cp^2} r((\rho_1\rho_2, v), \rho') \\
&= \sum_{v \in V_1} \sum_{\rho' \in \cp^2} r((\rho_1\rho_2, v), \rho') +  \sum_{v \in V_2} \sum_{\rho' \in \cp^2} r((\rho_1\rho_2, v), \rho') \\
&= \left(\sum_{v \in V_1} \sum_{\rho' \in \cp^2} r((\rho_1, v), \rho')\right) \rho_2 + \rho_1\left(\sum_{v \in V_2} \sum_{\rho' \in \cp^2} r(\rho_2, v), \rho') \right) \\
&= (\delta\rho_1)\rho_2 +  \rho_1(\delta\rho_2).
\end{align*}

To see that $\delta$ is a differential, it is useful to introduce its adjoint $\partial$ with respect to the symmetric, bilinear pairing $$\langle \, \cdot \, , \, \cdot \, \rangle : \ca \times \ca \to \mathbb{F}$$ with $\langle \rho_1, \rho_2\rangle = 1$ if and only if $\rho_1 = \rho_2$.

\begin{definition}
\label{def:partial}
The map $\partial: \ca \to \ca$ is defined on the path $\rho$ with underlying tree $T=(V,E)$ by
\begin{align*}
\partial \rho = \sum_{e \in E} q(\rho, e).
\end{align*}
\end{definition}

\begin{proposition}
We have $$\langle \partial a, b \rangle = \langle a, \delta b \rangle$$
for all $a, b \in \ca$.  Furthermore, $\partial^2 = 0$ and $\delta^2 = 0$.
\end{proposition}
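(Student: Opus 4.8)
The plan is to prove the adjointness relation $\langle\partial a,b\rangle=\langle a,\delta b\rangle$ first, then observe that it reduces $\delta^2=0$ to $\partial^2=0$, and finally deduce $\partial^2=0$ from Lemma~\ref{lem:factorial1}.

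\emph{Step 1 (adjointness).} By bilinearity it suffices to check the identity on a pair of paths $\rho_1,\rho_2$. By Definition~\ref{def:partial}, $\langle\partial\rho_1,\rho_2\rangle$ equals the number of breaks $e$ of $\rho_1$ with $q(\rho_1,e)=\rho_2$, while by \eqref{eqn:delta}, $\langle\rho_1,\delta\rho_2\rangle$ equals the number of pairs $(v,\rho')$ with $v$ a node of $\rho_2$, $\rho'\in\cp^2$, and $r((\rho_2,v),\rho')=\rho_1$. I would match these two sets by a bijection: a break $e$ of $\rho_1$ is contracted by $q(\cdot,e)$ to a node $v^\ast$ of $\rho_2$, and the length-two subpath $\rho'$ of $\rho_1$ carried by the one-arrow subtree induced by $e$ is a refinement of the edge $\rho_2(v^\ast)$, so set $e\mapsto(v^\ast,\rho')$; conversely, a refinement $(v,\rho')$ of $\rho_2$ produces in $\rho_1=r((\rho_2,v),\rho')$ a unique break $e$, namely the image of the single arrow of $\rho'$, so set $(v,\rho')\mapsto e$. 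That these assignments are mutually inverse is exactly the statement that refinement and gluing are the two mutually inverse special cases of swapping, i.e.\ the identity $W\circ W=\mathrm{id}$ from \eqref{Winverse} with $T'$ and $T''$ each taken to be a single node.

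\emph{Step 2 (reduction).} Applying Step 1 twice gives $\langle\partial^2 a,b\rangle=\langle\partial a,\delta b\rangle=\langle a,\delta^2 b\rangle$ for all $a,b\in\ca$; since the pairing is nondegenerate on the path basis, $\partial^2=0$ if and only if $\delta^2=0$. So it is enough to prove $\partial^2=0$.

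\emph{Step 3 (vanishing).} Expanding the definitions, $\langle\partial^2\rho,\sigma\rangle$ is the number of pairs $(e_1,e_2)$ with $e_1$ a break of $\rho$, $e_2$ a break of $q(\rho,e_1)$, and $q(q(\rho,e_1),e_2)=\sigma$; that is, it is the number of $2$-step gluing sequences from $\rho$ to $\sigma$. By Lemma~\ref{lem:factorial1} with $k=2$, this number is a multiple of $2$, hence zero in $\mathbb{F}$, so $\partial^2=0$ and therefore $\delta^2=0$. The substantive ingredient here is Lemma~\ref{lem:factorial1}, proved later; if one wants a self-contained argument for the $k=2$ case, one can instead check directly that interchanging the order of two breaks is a fixed-point-free involution on the set of $2$-step gluing sequences with fixed endpoints — via the arrow identification $\pi$ of \eqref{pi}, such a sequence from $\rho$ is the same data as an ordered pair of distinct arrows of $T$, both orders contract the same pair of arrows and, by transitivity, yield the same final path. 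The only real obstacle in the proof is then the routine verification in Step 1 that the refinement/gluing correspondence respects the domains of definition: $r((\rho_2,v),\rho')$ is defined precisely when $\rho'$ has the same source and target as $\rho_2(v)$, while $q(\rho_1,e)$ is always defined, so no term is created or lost in passing between the two descriptions.
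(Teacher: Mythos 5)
Your proof is correct and follows essentially the same route as the paper: adjointness via the mutual inverseness of refinement and gluing (\eqref{Winverse}), then $\partial^2=0$ by identifying $\langle\partial^2\rho,\sigma\rangle$ with the count of $2$-step gluing sequences and invoking Lemma~\ref{lem:factorial1}, then $\delta^2=0$ by adjointness. Your optional direct argument for the $k=2$ case is also sound and is in fact the content of the paper's later proof of Proposition~\ref{prop:biject1}.
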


\begin{proof}
For the first statement, by linearity it suffices to verify that $$\langle \partial \rho_1, \rho_2 \rangle = \langle \rho_1, \delta\rho_2 \rangle.$$
The left-hand side counts the number of $1$-step gluing sequences $\rho_1 \xrightarrow{e_1} \rho_2$.  The right-hand side counts all ways to refine an edge of $\rho_2$ to a path of length $2$ to obtain $\rho_1$.  Equality follows from the fact that gluing and refinement are inverses as in \eqref{Winverse}.

To prove $\partial^2 = 0$, it suffices to show that $\langle \partial^2\rho_1, \rho_3\rangle$ vanishes for all paths $\rho_1$ and $\rho_3$.  Indeed, the term $\langle \partial^2\rho_1, \rho_3\rangle$ counts the number of 2-step gluing sequences $\rho_1 \xrightarrow{e_1} \rho_2 \xrightarrow{e_2} \rho_3$, which is even by Lemma \ref{lem:factorial1}.

Now $\delta^2 = 0$ because
$\langle \delta^2 a, b \rangle = \langle \delta a, \partial b \rangle = \langle a, \partial^2 b \rangle = \langle a, 0\rangle = 0.$
\end{proof}

\begin{definition}
We define the element $D \in \ca$ by $$D = \sum_{\ve \in \ce} \ve = \sum_{\rho \in \cp^1} \rho.$$
\end{definition}

\begin{proposition}  The element $D$ satisfies the structure equation
$$\delta D = D \circ D.$$
\end{proposition}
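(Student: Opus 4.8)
The plan is to show directly that both sides of the structure equation equal $S := \sum_{\rho \in \cp^2} \rho$, the sum of all length-2 paths in $\ch$, each with coefficient $1$. This is the natural generalization of the remark in the introduction that, in the digraph case, $\delta D$ and $D \circ D$ both count length-two paths.

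For the right-hand side, bilinearity of the product and the fact that multiplication on paths is concatenation give $D \circ D = \sum_{\ve_1, \ve_2 \in \ce} c(\ve_1, \ve_2)$, where by convention $c(\ve_1, \ve_2) = 0$ when the concatenation is undefined. I would then observe that $(\ve_1, \ve_2) \mapsto c(\ve_1, \ve_2)$ is a bijection from the set of concatenable ordered pairs of edges onto $\cp^2$: a length-2 path $\rho$ has underlying tree with two nodes $v_1, v_2$ and a single arrow, say $(v_1, v_2)$, and condition (1) of Definition \ref{def:path} forces $t(\rho(v_1)) \cap s(\rho(v_2)) \neq \emptyset$, so $\rho = c(\rho(v_1), \rho(v_2))$ (surjectivity), while $\rho$ plainly determines $\rho(v_1)$ and $\rho(v_2)$ (injectivity). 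Hence $D \circ D = S$.

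For the left-hand side, $\delta D = \sum_{\ve \in \ce} \delta \ve$, and unwinding the definition of $\delta$ on an edge (equivalently, specializing \eqref{eqn:delta} to the single-node path $\ve$) shows that $\delta \ve$ is the sum, each term with coefficient $1$, of all length-2 paths $\rho$ with $s(\rho) = s(\ve)$ and $t(\rho) = t(\ve)$. It therefore suffices to show that every $\rho \in \cp^2$ satisfies $s(\rho) = s(\ve)$ and $t(\rho) = t(\ve)$ for a \emph{unique} edge $\ve \in \ce$; reversing the order of summation then yields $\delta D = S$. Uniqueness is immediate, since $\ce \subset 2^\cv \times 2^\cv$, so an edge is determined by its source and target. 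For existence, write $\rho = c(\ve_1, \ve_2)$ with gluing vertex $K$; the transitivity axiom supplies the edge $\ve := \big( s(\ve_1) \cup (s(\ve_2) - \{K\}),\ (t(\ve_1) - \{K\}) \cup t(\ve_2) \big) \in \ce$ of \eqref{glued}, and comparing with the definitions of $s(\rho)$ and $t(\rho)$ — from which $K$ is deleted at exactly the two nodes prescribed — gives $s(\ve) = s(\rho)$ and $t(\ve) = t(\rho)$. Equivalently, $\ve = q(\rho, e)$ for the unique break $e$ of $\rho$.

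The argument is short; the only place that needs care is the bookkeeping with sources and targets. Concretely, one must confirm via \eqref{glued} that the glued edge has precisely the source and target of the length-2 path, and one must note that distinct edges carry distinct source--target pairs, so that — working over $\mathbb{F}$, where a doubled term would vanish — neither $\delta D$ nor $D \circ D$ picks up a path with multiplicity exceeding one. Unlike Theorem \ref{thm:bistructure}, this proposition involves no delicate cancellation, so I do not anticipate a genuine obstacle; the substance is entirely in the matching of the concatenation formula against the transitivity identity \eqref{glued}.
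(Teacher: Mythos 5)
Your proof is correct and follows essentially the same route as the paper: both sides are identified with $\sum_{\rho \in \cp^2} \rho$, using that each length-2 path is the refinement of the unique edge $(s(\rho),t(\rho))$ supplied by transitivity, and the product of a unique ordered pair of edges. Your version merely spells out the source/target bookkeeping that the paper leaves implicit.
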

\begin{proof}
We claim $$\delta D = \sum_{\rho \in \cp^2} \rho = D \circ D.$$  The left-hand equality holds because each path $\rho$ of length 2 arises as the refinement of a unique edge $\ve = (s(\rho), t(\rho))$.  The right-hand equality holds because each path of length 2 arises as the product of two (not necessarily distinct) edges in a unique way.
\end{proof}

\section{Abstract paths}
\label{sec:abstractpaths}

In Section \ref{sec:bigraphs}, we will define the notion of the bigraph $\tch$ associated to a higraph $\ch$.  We will then systematically extend the arguments and results of Section \ref{sec:higraphs} to bigraphs.  In this section, we assemble the necessary lemmas, which concern the abstract form of a path in a bigraph.  We call such a form an abstract path, as it is defined without reference to a particular bigraph.  We now make this notion precise.

\begin{definition}
\label{def:abstractedge}
An {\em abstract vertex} is an element of the set $\cv_{ab} = \{o,u,s\}$.  The symbols $o$, $u$, and $s$ stand for {\em interior}, {\em unstable}, and {\em stable}, respectively.  Both unstable and stable abstract vertices are considered {\em boundary}.

An {\em abstract edge} is an element $$\ve = (\mu, (o_1,u_1,s_1), (o_2,u_2,s_2))$$ of the subset $$\mathcal{E}_{ab} \subset \mathbb{F} \times (\mathbb{N} \times \mathbb{N} \times \mathbb{N}) \times (\mathbb{N} \times \mathbb{N} \times \mathbb{N}).$$  This subset is cut out by the following two conditions: \\

\indent Condition 0: If $\mu = 0$ then $s_1 = u_2 = 0$.\\
\indent Condition 1: If $\mu = 1$ then $o_1 = o_2 = 0$. \\

\noindent An abstract edge $\ve$ is {\em interior} if $\mu = 0$ and {\em boundary} if $\mu = 1$.

\end{definition}

We think of the abstract edge $\ve$ as having $o_1 + u_1 + s_1$ incoming ends (or source vertices), of which $o_1$ are interior, $u_1$ are unstable, and $s_1$ are stable.  Similarly, $\ve$ has $o_2 + u_2 + s_2$ outgoing ends, of which $o_2$ are interior, $u_2$ are unstable, and $s_2$ are stable.  Condition $0$ says that the incoming ends of an interior abstract edge are interior or unstable, while the outgoing ends are interior or stable.  Condition $1$ says that the ends of a boundary abstract edge are boundary abstract vertices.

We can assemble abstract edges into an abstract path, so long as there are sufficiently many ends for concatenation. 

\begin{definition}
An {\em abstract path} is a triple $\tau = (T, \tau, \sigma)$ consisting of a non-empty directed tree $T$ and maps
\begin{align*}
\tau &: V \to \mathcal{E}_{ab} \\
\sigma &: E \to \cv_{ab}
\end{align*}
such that the following inequalities hold for each $v \in V$, with $\tau(v) = (\mu, (o_1,u_1,s_1), (o_2,u_2,s_2))$:
\begin{align*}
o_1 &\geq |\{e \in E \, | \, t(e) = v, \, \sigma(e) = o\}|, \\
u_1 &\geq |\{e \in E \, | \, t(e) = v, \, \sigma(e) = u\}|, \\
s_1 &\geq |\{e \in E \, | \, t(e) = v, \, \sigma(e) = s\}|, \\
o_2 &\geq |\{e \in E \, | \, s(e) = v, \, \sigma(e) = o\}|, \\
u_2 &\geq |\{e \in E \, | \, s(e) = v, \, \sigma(e) = u\}|, \\
s_2 &\geq |\{e \in E \, | \, s(e) = v, \, \sigma(e) = s\}|.
\end{align*}
An abstract path $\tau$ is {\em boundary} if all of its abstract edges are boundary.
\end{definition}

We interpret an arrow $e$ from node $v_1$ to node $v_2$ to signify that the abstract edges $\tau(v_1)$ and $\tau(v_2)$ are concatenated at an abstract vertex of type $\sigma(e)$.  In this sense, an abstract path has a well-defined number of source and target abstract vertices of each type.  For example, the number of incoming stable ends is given by
$$s_1(\tau) = \left(\sum_{v \in V} s_1(v)\right) - |\sigma^{-1}(s)|,$$
while the number of outgoing unstable ends is given by
$$u_2(\tau) = \left(\sum_{v \in V} u_2(v)\right) - |\sigma^{-1}(u)|.$$

\begin{definition}
An abstract path $\tau$ is {\em legal} if $s_1(\tau) = u_2(\tau) = 0$; equivalently, $\tau$ is legal if its ends are compatible with an interior abstract edge (see Condition $0$ of Definition \ref{def:abstractedge}).
\end{definition}

It is convenient to pull back the terminology in Definition \ref{def:abstractedge}, so that an arrow $e \in E$ is interior, unstable, or stable according to the value of $\sigma(e)$.  Similarly, a node $v \in V$ is interior or boundary according to the value of $\mu(v)$.  We may decompose the node set $V$ as $V^0 \cup V^1$ where $V^0 = \mu^{-1}(0)$ denotes the set of interior nodes and $V^1 = \mu^{-1}(1)$ denotes the set of boundary nodes.

\begin{definition}  The {\em interior neighborhood} of a node $v \in V$ is the set of adjacent interior nodes:
$$\text{Nbhd}^o(v) = V^o \cap \{v' \in V \, | \, (v, v') \in E \text{ or } (v', v) \in E \}.$$
The {\em interior star} of $v$ is the subtree $\text{Star}^o(v)$ of $T$ induced by $\{v\} \cup \text{Nbhd}^o(v)$.
\end{definition}

\subsection{Gluing moves on abstract paths}
We have introduced all the necessary terminology to define the gluing moves on abstract paths.  We illustrate these moves in Figure \ref{fig:gluerule}, while precise definitions are given below.

\begin{figure}[htp]
\centering
\includegraphics[width=150mm]{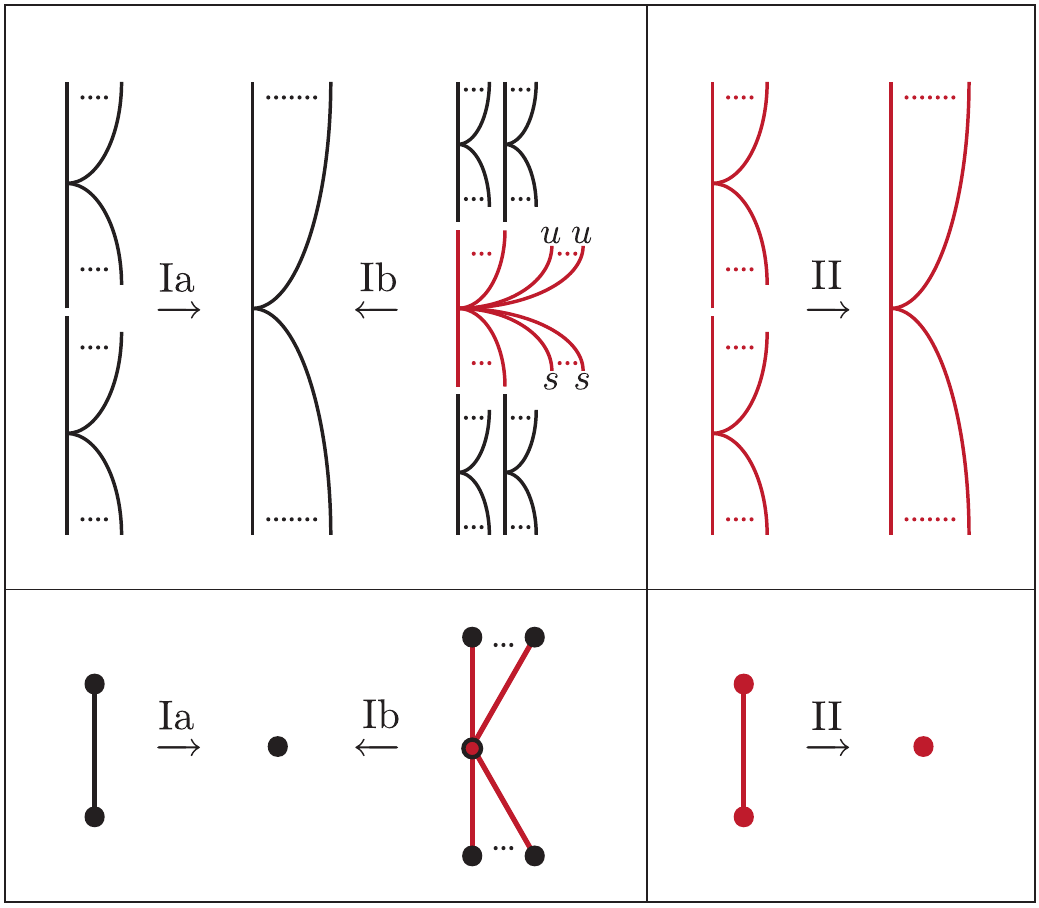}
\caption{The three gluing moves are Ia (interior), Ib (boundary-obstructed), and II (boundary).  Move Ia glues two adjacent interior edges to obtain a single interior edge.  Move Ib is applicable in the presence of a boundary edge such that the subpath obtained by appending adjacent interior edges is legal.  Move Ib replaces this subpath with a single interior edge.  Move II glues two adjacent boundary edges to obtain a single boundary edge.  The effect of each gluing move is to contract the underlying tree, as shown in the second row.  Interior (resp., boundary) breaks and abstract edges above correspond to black (resp., red) arrows and nodes below.  A red node with a black border is valid for Move Ib (a red node without a black border may or may not be valid).}
\label{fig:gluerule}
\end{figure}

\begin{definition}
The three gluing moves on an abstract path $\tau = (T, \tau, \sigma)$ with $T = (V, E)$ are: \\

\noindent Ia. \textsc{Interior:} Given an arrow $e \in E$ between interior nodes,
contract $e$ and set $$\tau(v^*) = \tau(s(e)) + \tau(t(e)) - (0, (1,0,0), (1,0,0)).$$

\noindent Ib. \textsc{Boundary-obstructed:} Given a boundary node $v \in V$ with
\begin{align*}
o_1(v) + s_1(v) &=  |\{e \in E \, | \, t(e) = v\}|, \\
o_2(v) + u_2(v) &=  |\{e \in E \, | \, s(e) = v\}|,
\end{align*}
\hspace{5.1mm} contract $\text{Star}^o(v)$ and set $$\tau(v^*) = \sum_{v' \in \text{Nbhd}^o(v)} \tau(v') - \tau(v) + (1,(0,0,0),(0,0,0)).$$

\noindent II. \textsc{Boundary:} Given an arrow $e \in E$ between boundary nodes, contract $e$ and set
$$\tau(v^*) = \left\{
\begin{array}{cl}
\tau(s(e)) + \tau(t(e)) - (1, (0,1,0), (0,1,0)) &\text{if } \sigma(e) = u, \\
\tau(s(e)) + \tau(t(e)) - (1, (0,0,1), (0,0,1)) &\text{if } \sigma(e) = s.
\end{array}\right.$$
We denote the resulting abstract paths by $q^{Ia}(\tau, e)$, $q^{Ib}(\tau, v)$, and $q^{II}(\tau, e)$, respectively.
\end{definition}

\begin{definition}
\label{def:marker}
A {\em marker} on $\tau$ is a node or arrow of the underlying tree $T$.  We will use $X$ as shorthand for the marker set $V \cup E$.  Note that at most one of $q^{Ia}(\tau, x)$, $q^{Ib}(\tau, x)$, and $q^{II}(\tau, x)$ is defined for each $x \in X$.  If one is indeed defined, we denote the resulting glued abstract path by $\tilde q(\tau, x)$.
\end{definition}

\subsection{Abstract paths of low weight}
We now define a notion of weight for abstract paths.  Since all vertices, edges, and paths in this section are abstract, we will drop the descriptor.

\begin{definition}
\label{def:weight}
The {\em weight} of an edge $\ve = (\mu, (o_1,u_1,s_1), (o_2,u_2,s_2))$ is given by:
$$w(\ve) = \left\{
\begin{array}{cl}
1 &\text{if } \mu = 0, \\
2 - s_1 - u_2 &\text{if } \mu = 1.
\end{array}\right.$$
The {\em weight} of a path is the sum of the weights of its edges:
$$w(\tau) = \sum_{v \in V} w(\tau(v)).$$
\end{definition}

\begin{lemma}
\label{lem:glueweight}
Each gluing move reduces weight by 1.
\end{lemma}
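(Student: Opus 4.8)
The plan is to check each of the three moves separately. In every case the new node $v^*$ replaces a set $S$ of nodes — two nodes for Moves Ia and II, the set $\{v\}\cup\text{Nbhd}^o(v)$ for Move Ib — so the change in weight is $\bigl(\sum_{v'\in S}w(\tau(v'))\bigr)-w(\tau(v^*))$, and the task is to show this equals $1$. The structural facts I will use are Conditions $0$ and $1$ of Definition~\ref{def:abstractedge} (an interior edge has $s_1=u_2=0$, hence is never the target of a stable arrow nor the source of an unstable arrow; a boundary edge has $o_1=o_2=0$, hence no interior arrow is incident to a boundary node) together with the fact that the underlying tree carries no multi-edges. Move Ia is immediate: the contracted arrow joins two interior nodes and is therefore of type $o$, the new node is again interior (its $\mu$-entry is $0+0-0$), and as every interior edge has weight $1$ the weight drops from $1+1$ to $1$. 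In each move one should also confirm that the displayed formula for $\tau(v^*)$ produces an element of $\mathcal{E}_{ab}$ with non-negative entries; this is routine.

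For Move II, put $v_1=s(e)$ and $v_2=t(e)$. Both are boundary, so $\sigma(e)\in\{u,s\}$ and the new node is again boundary (its $\mu$-entry is $1+1-1=1$ in $\mathbb{F}$). In each of the two displayed cases the vector subtracted in forming $\tau(v^*)$ lowers exactly one of the $s_1$- or $u_2$-entries of $\tau(v_1)+\tau(v_2)$ by $1$, so with $S:=s_1(v_1)+s_1(v_2)+u_2(v_1)+u_2(v_2)$ one gets $s_1(v^*)+u_2(v^*)=S-1$ and hence $w(\tau(v^*))=2-(S-1)=3-S$. The weight before is $\bigl(2-s_1(v_1)-u_2(v_1)\bigr)+\bigl(2-s_1(v_2)-u_2(v_2)\bigr)=4-S$, so the difference is $1$.

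Move Ib is the substantive case. Here $v$ is boundary, the new node $v^*$ is interior of weight $1$, and with $k:=|\text{Nbhd}^o(v)|$ the weight before equals $k+\bigl(2-s_1(v)-u_2(v)\bigr)$; so the claim reduces to the identity $k=s_1(v)+u_2(v)$. First, $\text{Star}^o(v)$ is a tree on $k+1$ nodes — two interior neighbours of $v$ cannot be joined by an arrow, since together with $v$ that arrow would close a triangle in $T$ — so it has exactly $k$ arrows, each incident to $v$. Since $v$ is boundary none of these arrows is interior; each is therefore a stable arrow pointing \emph{into} $v$ or an unstable arrow pointing \emph{out of} $v$ (the reversed orientation would force an interior edge to be the target of a stable arrow, or the source of an unstable arrow). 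Write $r$ for the number of the former and $p$ for the number of the latter, so $k=p+r$. Now I invoke the applicability hypothesis of Move Ib, namely that $\text{Star}^o(v)$ is a legal abstract path; computing its source and target data, the number of unfilled incoming stable ends is $s_1(v)-r$ and the number of unfilled outgoing unstable ends is $u_2(v)-p$, and legality forces both to vanish. Hence $r=s_1(v)$, $p=u_2(v)$, and $k=s_1(v)+u_2(v)$, completing the proof. The step I expect to need the most care is exactly this last one: correctly identifying the count of stable and unstable arrows incident to $v$ within $\text{Star}^o(v)$ with the entries $s_1(v)$ and $u_2(v)$, i.e.\ verifying that the Move Ib hypothesis unwinds to precisely these two equalities. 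The rest is bookkeeping with the weight formula of Definition~\ref{def:weight}.
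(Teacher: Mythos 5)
Your proof is correct, and in the one substantive case (Move Ib) it takes a genuinely different route from the paper. The paper reduces Move Ib to the claim that the subpath carried by $\text{Star}^o(v)$ has weight $2$ and proves this by induction on its length: it prunes one adjacent interior edge at a time, re-legalizes the truncated subpath by converting the newly exposed incoming stable end to an unstable one (restoring the lost unit of weight), and invokes the inductive hypothesis. You instead compute directly: $\text{Star}^o(v)$ is a star with $k=|\text{Nbhd}^o(v)|$ arrows all incident to $v$; Conditions $0$ and $1$ force each such arrow to be either a stable arrow into $v$ or an unstable arrow out of $v$; and legality pins their counts to exactly $s_1(v)$ and $u_2(v)$, so the star has weight $k+\bigl(2-s_1(v)-u_2(v)\bigr)=2$. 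This isolates the single identity $k=s_1(v)+u_2(v)$ that makes the bookkeeping close, and is arguably more transparent than the paper's induction (whose prune-and-relegalize scheme the paper then reuses as the engine for Proposition \ref{prop:legalweight}). Your treatments of Moves Ia and II match the paper's. One remark: like the paper's own proof, you read the applicability hypothesis of Move Ib as ``$\text{Star}^o(v)$ is legal'' rather than as the literal displayed equalities in the definition of the move, which count \emph{all} arrows of $T$ incident to $v$ rather than only those in the star; you are explicit about which hypothesis you invoke, which is a virtue, and it is the reading the paper itself uses throughout.
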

\begin{proof}
For Move Ia, the result is immediate.

For Move Ib, it suffices to show that the subpath $\tau'$ corresponding to $\text{Star}^o(v)$ has weight 2, since this subpath is being replaced by an edge of weight $1$.  We induct on the length $l(\tau')$.  If $l(\tau') = 1$, then $\tau'$ consists of a single legal boundary edge $\ve = \tau(v)$, which has weight 2 by definition.  If $l(\tau') > 1$, then $\tau'$ includes an interior edge $\ve'$.  We suppose that $\ve'$ precedes $\ve = \tau(v)$.  Then the break $e$ between $\ve'$ and $\ve$ is stable.  Let $\tau''$ be the subpath of $\tau'$ that results from pruning $\ve'$ from $\tau'$.  Then $$w(\tau') = w(\tau'') + 1.$$  The subpath $\tau''$ is illegal due to a single incoming stable end.  Converting this stable end to an unstable end yields a legal path $\tau'''$ with $$w(\tau''') = w(\tau'') + 1.$$
Furthermore, $\tau'''$ is a candidate for Move II of length $l(\tau')-1$, so by induction we have $$w(\tau''') = 2.$$ Together these equations imply $w(\tau') = 2$.  The proof is similar if $\ve'$ follows $\ve$.

For Move II, let $\ve_1 = \tau(s(e))$ and $\ve_2 = \tau(t(e))$ be the consecutive boundary edges and let $\ve$ be the resulting glued boundary edge.  If $e$ is stable, then
\begin{align*}
w(\ve_1) + w(\ve_2) &= (2 - s_1(\ve_1) - u_2(\ve_1)) + (2 - s_1(\ve_2) - u_2(\ve_2)) \\
&= 1+ 2 - (s_1(\ve_1) + s_1(\ve_2) - 1) - (u_2(\ve_1) + u_2(\ve_2)) \\
&= 1 + 2 + s_1(\ve) - u_2(\ve) \\
&= 1 + w(\ve).
\end{align*}
The proof is similar if $e$ is unstable.
\end{proof}

In order to classify paths of low weight, we introduce several counting functions.
\begin{definition}
Given a path $\tau$, let $O(\tau)$, $U(\tau)$, and $S(\tau)$ denote the number of breaks of each type:
\begin{align*}
O(\tau) &= |\sigma^{-1}(o)| \\
U(\tau) &= |\sigma^{-1}(u)| \\
S(\tau) &= |\sigma^{-1}(s)|
\end{align*}
Let $B(\tau)$ denote the number of boundary edges:
$$B(\tau) = |\mu^{-1}(1)|$$
\end{definition}

\begin{proposition}
\label{prop:legalweight}
For any legal path $\tau$, we have
\begin{align}
\label{legalweight}
w(\tau) = 1 + O(\tau) + B(\tau).
\end{align}
In particular, a legal path has weight $1$ if and only if it consists of a single interior edge.
\end{proposition}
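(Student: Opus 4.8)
The plan is to establish \eqref{legalweight} by direct bookkeeping, matching each ingredient of $w(\tau)$ against the break counts. Write $T = (V,E)$ and split the node set as $V = V^0 \sqcup V^1$ into interior and boundary nodes, so that $|V^1| = B(\tau)$; since $T$ is a tree, $|E| = |V| - 1$, that is, $O(\tau) + U(\tau) + S(\tau) = |V| - 1$. By the definition of weight, $w(\tau) = |V^0| + \sum_{v \in V^1}\bigl(2 - s_1(v) - u_2(v)\bigr)$. Condition $0$ of Definition \ref{def:abstractedge} forces $s_1(v) = u_2(v) = 0$ at every interior node, so I may extend these sums over all of $V$ and obtain $w(\tau) = |V^0| + 2|V^1| - \sum_{v \in V} s_1(v) - \sum_{v \in V} u_2(v)$.

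Next I would use legality to evaluate those two sums. The formula recorded earlier reads $s_1(\tau) = \bigl(\sum_{v \in V} s_1(v)\bigr) - S(\tau)$ and, symmetrically, $u_2(\tau) = \bigl(\sum_{v \in V} u_2(v)\bigr) - U(\tau)$. Since $\tau$ is legal, $s_1(\tau) = u_2(\tau) = 0$, hence $\sum_{v \in V} s_1(v) = S(\tau)$ and $\sum_{v \in V} u_2(v) = U(\tau)$. Substituting gives $w(\tau) = |V^0| + 2|V^1| - S(\tau) - U(\tau)$, and then plugging in $S(\tau) + U(\tau) = |V| - 1 - O(\tau)$ and $|V^0| + |V^1| = |V|$ collapses this to $w(\tau) = |V^1| + 1 + O(\tau) = 1 + O(\tau) + B(\tau)$, which is \eqref{legalweight}.

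For the final sentence, I would first record an auxiliary observation: the target of a stable break and the source of an unstable break are boundary nodes. Indeed, if $\sigma(e) = s$ then the defining inequality of an abstract path gives $s_1 \ge 1$ for the edge at $t(e)$, which Condition $0$ rules out when that edge is interior; the unstable case is symmetric. Therefore $B(\tau) = 0$ implies $S(\tau) = U(\tau) = 0$, so \eqref{legalweight} specializes to $w(\tau) = 1 + O(\tau)$. A single interior edge clearly has $O = B = 0$ and weight $1$. Conversely, $w(\tau) = 1$ forces $O(\tau) = B(\tau) = 0$ (both are nonnegative); then $B(\tau) = 0$ gives $S(\tau) = U(\tau) = 0$ by the observation, whence $O(\tau) = |E| = |V| - 1 = 0$, so $T$ has a single node, necessarily interior since $B(\tau) = 0$.

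I do not anticipate a genuine obstacle: the argument is elementary counting on a tree. The three points that need care are using Condition $0$ to replace sums over $V^1$ by sums over $V$, invoking legality in exactly the right place to annihilate $s_1(\tau)$ and $u_2(\tau)$, and the small structural observation about breaks adjacent to boundary nodes, which is what the ``in particular'' clause really rests on.
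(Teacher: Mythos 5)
Your proof is correct, and it takes a genuinely different route from the paper's. The paper establishes \eqref{legalweight} by induction on $l(\tau)$: it first contracts away legal boundary edges via Move Ib, then shows that any remaining legal path of length greater than one admits one of the three gluing moves, each of which drops both sides of \eqref{legalweight} by one (the left side by Lemma \ref{lem:glueweight}), and closes with the same observation you make for the ``in particular'' clause, namely that with no boundary edges every break is interior. Your argument instead is a closed-form double count: expand $w(\tau)$ edge by edge, use Condition $0$ to extend the boundary sum over all of $V$, convert $\sum_v s_1(v)$ and $\sum_v u_2(v)$ into $S(\tau)$ and $U(\tau)$ via legality, and finish with $|E| = |V| - 1$. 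This buys two things: it avoids any dependence on Lemma \ref{lem:glueweight} and the case analysis of which move applies, and --- if you simply refrain from setting $s_1(\tau) = u_2(\tau) = 0$ --- the identical computation yields the general formula \eqref{genweight} of Corollary \ref{cor:genweight} directly, whereas the paper has to derive that corollary afterwards by padding $\tau$ with weight-$2$ boundary edges. What the paper's inductive route buys in exchange is that it exercises the interaction between weight and the gluing moves, which is the mechanism reused in the classification results that follow. One shared tacit hypothesis: both arguments use that the tree underlying a path is connected (you via $|E| = |V|-1$, the paper via the trichotomy of available moves), which is the intended reading of ``directed tree.''
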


\begin{proof}
We induct on the length $l(\tau)$.  The base case $l(\tau) = 1$ follows from the fact that a legal boundary edge has weight 2.

We now suppose that $l(\tau) > 1$.  Note that Move Ib, applied to a legal boundary edge, reduces both $w$ and $B$ by 1 while preserving $O$ (and $l$), and therefore preserves \eqref{legalweight}.  So we may assume without loss of generality that $\tau$ has no legal boundary edges.  Now $\tau$ falls into at least one of the following cases:

{\em Case 1:} $\tau$ has two adjacent interior edges.

{\em Case 2:} $\tau$ has a boundary edge without adjacent boundary edges.

{\em Case 3:} $\tau$ has two adjacent boundary edges.

In Case 1, we can apply Move Ia.  This reduces $l$ and $O$ by 1 and preserves $B$.  In Case 2, since $\tau$ is legal, we can apply Move Ib to the interior star of this (illegal) boundary edge to reduce $l$ by at least 1.  This preserves $O$ and reduces $B$ by 1.  In Case 3, we can apply Move II.  This reduces $l$ and $B$ by 1 and preserves $O$.  So in all three cases, the right-hand side of \eqref{legalweight} is reduced by 1.  By Lemma \ref{lem:glueweight}, the left-hand side of \eqref{legalweight} is reduced by 1 as well.  Thus \eqref{legalweight} is preserved.  Since length is reduced in all cases, we are done by induction.

We now turn to the final claim.  If $\tau$ is legal and $w(\tau) = 1$, then \eqref{legalweight} implies that $\tau$ has no boundary edges and no interior breaks.  Since breaks between adjacent interior edges are necessarily interior, we conclude that $\tau$ consists of a single interior edge.
\end{proof}

\begin{corollary} [Classification of legal paths of low weight]
\label{cor:legalweight2}
\ \\
(i) A legal path has weight 2 if and only if it has the form Ia or Ib. \\
(ii) A boundary path has length 2 if and only if it has the form II.
\end{corollary}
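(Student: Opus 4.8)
The plan is to derive the classification from Proposition~\ref{prop:legalweight}, which for a legal path $\tau$ identifies the condition $w(\tau) = 2$ with $O(\tau) + B(\tau) = 1$; since $O$ and $B$ are nonnegative integers, exactly one of the cases ``$O(\tau) = 1,\ B(\tau) = 0$'' and ``$O(\tau) = 0,\ B(\tau) = 1$'' occurs. The ``if'' direction of (i) is then just a weight count. A path of the form Ia consists of two interior edges joined at a single break, so $B = 0$; an interior edge has no incoming stable or outgoing unstable end, so the source and target of $\tau$ are those of the two edges unchanged and $\tau$ is legal, whence $w = 1 + 1 + 0 = 2$. A path of the form Ib is legal by construction, has exactly one (boundary) edge alongside a collection of interior edges so $B = 1$, and all of its breaks are incident to the boundary node (the underlying tree is a star, being the interior star of that node), hence stable or unstable so $O = 0$; thus $w = 1 + 0 + 1 = 2$. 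So the content is the ``only if'' direction.

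First I would record two consequences of Conditions~$0$ and~$1$ of Definition~\ref{def:abstractedge}: a break between two interior nodes is forced to be interior (an interior edge has no incoming stable or outgoing unstable end), and a break incident to one interior node and one boundary node is forced to be stable when the interior node is its source and unstable when the interior node is its target (a boundary edge has no interior end). Now let $\tau$ be legal with $w(\tau) = 2$. In the case $O(\tau) = 1$, $B(\tau) = 0$, every edge of $\tau$ is interior, so by the first observation every break is interior; hence $O(\tau)$ equals the total number of breaks $l(\tau) - 1$, giving $l(\tau) = 2$, so $\tau$ has the form Ia. In the case $O(\tau) = 0$, $B(\tau) = 1$, there is a unique boundary node $v$; an arrow avoiding $v$ would join two interior nodes and hence be interior, contradicting $O(\tau) = 0$, so every arrow meets $v$ and the underlying tree is $\text{Star}^o(v)$. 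It remains to verify the Move Ib preconditions at $v$. Since $v$ is a boundary node, $o_1(v) = o_2(v) = 0$, so these preconditions assert that the number of arrows into $v$ equals $s_1(v)$ and the number of arrows out of $v$ equals $u_2(v)$. By the second observation all arrows into $v$ are stable and all arrows out of $v$ are unstable, and there are no other breaks, so $S(\tau)$ and $U(\tau)$ count, respectively, the arrows into and out of $v$. Finally, since the interior edges of $\tau$ have no incoming stable or outgoing unstable ends, legality of $\tau$ gives $s_1(v) - S(\tau) = s_1(\tau) = 0$ and $u_2(v) - U(\tau) = u_2(\tau) = 0$, which are precisely the two preconditions. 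Hence $\tau$ has the form Ib.

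Part (ii) is essentially formal. If $\tau$ has the form II it consists of two boundary edges joined at one break, so $l(\tau) = 2$. Conversely, let $\tau$ be a boundary path with $l(\tau) = 2$, say two boundary edges joined at the break $e$. Since a boundary edge has no interior end (Condition~$1$), $\sigma(e) \in \{u, s\}$, so Move II applies at $e$; one checks directly that the prescribed glued edge lies in $\mathcal{E}_{ab}$, its first coordinate being $1$, its interior coordinates $0$, and its unstable (or stable) coordinates remaining nonnegative because each is already bounded below by $1$ at the appropriate endpoint of $e$. Thus $\tau$ has the form II.

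I expect the one place requiring genuine care to be the case $O(\tau) = 0$, $B(\tau) = 1$ of part (i): one must see both that the underlying tree is forced to be the interior star of the unique boundary edge and that the two Move Ib preconditions are not extra hypotheses but are implied automatically by legality, via the break-type analysis above. Everything else is a direct application of Proposition~\ref{prop:legalweight} together with the edge conditions of Definition~\ref{def:abstractedge} (Lemma~\ref{lem:glueweight} could replace the weight count in the ``if'' direction, but is not essential).
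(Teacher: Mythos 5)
Your proof is correct and follows the paper's argument: both invoke Proposition~\ref{prop:legalweight} to split the weight-2 case into $O(\tau)=1,\ B(\tau)=0$ versus $O(\tau)=0,\ B(\tau)=1$, identify these with forms Ia and Ib respectively, and treat (ii) as essentially tautological. You simply fill in details the paper leaves implicit, namely the ``if'' direction and the verification (via the forced break types and legality) that the Move~Ib preconditions hold at the unique boundary node.
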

\begin{proof}
For (i), suppose $\tau$ is legal and $w(\tau) = 2$.  By \eqref{legalweight}, there are two possibilities:

{\em Case 1:} $O(\tau) = 1$ and $B(\tau) = 0$.

{\em Case 2:} $O(\tau) = 0$ and $B(\tau) = 1$.

In Case 1, $\tau$ consists of two interior edges, giving the form Ia.

In Case 2, no two interior edges may be adjacent, so $\tau$ consists of a single boundary edge $\tau(v)$ and some number of adjacent interior edges.  Thus $\tau$ coincides with Star$^o(v)$ .  Since $\tau$ is legal, it has the form Ib.

Claim (ii) is tautological.
\end{proof}

\begin{corollary}
\label{cor:genweight}
For any path $\tau$, we have
\begin{align}
\label{genweight}
w(\tau) = 1 + O(\tau) + B(\tau) - s_1(\tau) - u_2(\tau).
\end{align}
\end{corollary}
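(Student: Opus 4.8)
\emph{Proof plan.}
The plan is to reduce \eqref{genweight} to the legal case, which is Proposition \ref{prop:legalweight}. Indeed, if $\tau$ is legal then $s_1(\tau) = u_2(\tau) = 0$ by definition, so \eqref{genweight} becomes exactly \eqref{legalweight}. For an arbitrary path $\tau$, I would modify it into a legal path $\tau^*$ without altering its underlying tree, its breaks, or which of its nodes are boundary, while carefully tracking the effect on the weight; then I would solve \eqref{legalweight} for $\tau^*$ back into a formula for $w(\tau)$.

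The modification I have in mind is exactly the ``end conversion'' already used in the Move Ib portion of the proof of Lemma \ref{lem:glueweight}. Suppose $s_1(\tau) > 0$. Since $s_1(\tau) = \bigl(\sum_{v \in V} s_1(v)\bigr) - |\sigma^{-1}(s)|$ and the defining inequality of an abstract path gives $s_1(v) \geq |\{e \in E : t(e) = v,\ \sigma(e) = s\}|$ at every node, some node $v$ must satisfy this inequality strictly, i.e.\ it carries an incoming stable end not consumed by a break. Because $s_1(v) > 0$, Condition $0$ of Definition \ref{def:abstractedge} forces $v$ to be a boundary node, so $\tau(v)$ has the form $(1,(o_1,u_1,s_1),(o_2,u_2,s_2))$ with weight $2 - s_1 - u_2$. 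I would replace $\tau(v)$ by $(1,(o_1,u_1+1,s_1-1),(o_2,u_2,s_2))$: this is still an abstract edge (Conditions $0$ and $1$ are untouched), the abstract-path inequalities at $v$ still hold (we only lowered $s_1(v)$ below a value that strictly exceeded the stable in-degree, and we raised $u_1(v)$), and the result has the same tree, the same breaks, and the same boundary nodes, with $s_1(\tau)$ decreased by $1$, with $O(\tau)$ and $B(\tau)$ unchanged, and with $w(\tau)$ increased by $1$. Dually, whenever $u_2(\tau) > 0$ I would convert an outgoing unstable end at a boundary node into an outgoing stable end, again raising $w$ by $1$ and lowering $u_2(\tau)$ by $1$ while fixing $O$, $B$, and the tree.

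Performing these moves $s_1(\tau) + u_2(\tau)$ times yields a path $\tau^*$ with $s_1(\tau^*) = u_2(\tau^*) = 0$, hence legal, and satisfying $w(\tau^*) = w(\tau) + s_1(\tau) + u_2(\tau)$, $O(\tau^*) = O(\tau)$, and $B(\tau^*) = B(\tau)$. Proposition \ref{prop:legalweight} then gives $w(\tau^*) = 1 + O(\tau^*) + B(\tau^*) = 1 + O(\tau) + B(\tau)$, and rearranging produces \eqref{genweight}. Essentially all the real content sits in Proposition \ref{prop:legalweight}; the only things to verify here are that each end conversion produces a bona fide abstract path and changes exactly the bookkeeping quantities claimed, so I do not anticipate a genuine obstacle. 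The one mild point to get right is that $s_1(\tau) > 0$ (resp.\ $u_2(\tau) > 0$) really does guarantee a node with a strict excess of incoming stable (resp.\ outgoing unstable) ends over the corresponding breaks, which is immediate from summing the abstract-path inequalities.
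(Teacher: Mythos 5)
Your proof is correct, and it rests on the same reduction as the paper's: legalize $\tau$ and then invoke Proposition \ref{prop:legalweight}. The mechanism differs, though. The paper legalizes by \emph{concatenating} a weight-$2$ cap edge $(1,(0,1,0),(0,0,1))$ onto each unused incoming stable end and each unused outgoing unstable end; this raises $w$ by $2(s_1(\tau)+u_2(\tau))$ and raises $B$ by $s_1(\tau)+u_2(\tau)$ while fixing $O$, and the two contributions net out to \eqref{genweight}. You instead legalize by \emph{relabeling} ends in place, converting an excess incoming stable end to unstable (and dually an excess outgoing unstable end to stable) at a boundary node; this raises $w$ by exactly $s_1(\tau)+u_2(\tau)$ and fixes both $O$ and $B$, so the arithmetic is a one-line rearrangement. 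Your route buys slightly cleaner bookkeeping (only one quantity changes), at the cost of two small verifications the paper's route avoids: that a node with a strict excess of stable-in (resp.\ unstable-out) ends actually exists when $s_1(\tau)>0$ (resp.\ $u_2(\tau)>0$), which you correctly extract by summing the abstract-path inequalities, and that the relabeled edge still satisfies Conditions $0$ and $1$ of Definition \ref{def:abstractedge} and the node inequalities, which you also check. Both arguments are complete; neither has a gap.
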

\begin{proof}
We build a legal path $\tau'$ from $\tau$ by concatenating the abstract edge $(1, (0,1,0), (0,0,1))$ before each incoming stable end of $\tau$ and after each outgoing unstable end of $\tau$.  This adds $s_1(\tau) + u_2(\tau)$ boundary edges of weight $2$, so
\begin{align*}
w(\tau) &= w(\tau') - 2(s_1(\tau) + u_2(\tau)) \\
&= (1 + O(\tau') + B(\tau')) - 2(s_1(\tau) + u_2(\tau)) \\
&= 1 + O(\tau) + (B(\tau) + s_1(\tau) + u_2(\tau)) - 2(s_1(\tau) + u_2(\tau)) \\
&= 1 + O(\tau) + B(\tau) - s_1(\tau) - u_2(\tau)
\end{align*}
where the second equality uses Proposition \ref{prop:legalweight}.
\end{proof}

\subsubsection{Good tree-like paths}

\begin{definition}
A path is {\em tree-like} if every edge has exactly one outgoing end.  For such a path $\tau$, let $B_s(\tau)$ denote the number of boundary edges with stable outgoing end:
$$B_s(\tau) = |\{v \in V \, | \, \mu(v) = 1 \text{ and } s_2(v) = 1\}|.$$
\end{definition}

\begin{definition}
A path is {\em good} if each end is interior or unstable.
A break is {\em good} if it is interior or unstable.  Let $G(\tau)$ denote the number of good breaks in $\tau$:
$$G(\tau) = O(\tau) + U(\tau).$$
\end{definition}

We will use the following corollary of Corollary \ref{cor:genweight} to characterize good tree-like paths of low weight. 

\begin{corollary}
If $\tau$ is a legal tree-like path or a good tree-like path, then
\begin{align}
\label{goodweight}
w(\tau) = 1 + G(\tau) + B_s(\tau).
\end{align}
\end{corollary}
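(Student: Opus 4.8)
The plan is to derive the formula directly from Corollary \ref{cor:genweight}, which gives, for \emph{every} path,
\[
w(\tau) = 1 + O(\tau) + B(\tau) - s_1(\tau) - u_2(\tau).
\]
Since $G(\tau) = O(\tau) + U(\tau)$, matching this against the desired identity $w(\tau) = 1 + G(\tau) + B_s(\tau)$ and cancelling the common term $1 + O(\tau)$ reduces the problem to proving
\[
B(\tau) - s_1(\tau) - u_2(\tau) = U(\tau) + B_s(\tau)
\]
whenever $\tau$ is legal and tree-like, or good and tree-like. The first observation I would make is that $s_1(\tau) = 0$ under either hypothesis: for a legal path this is part of the definition of legal, and for a good path every incoming end is interior or unstable, so there are no incoming stable ends. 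Thus it remains to establish $B(\tau) - u_2(\tau) = U(\tau) + B_s(\tau)$.

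Next I would use the tree-like hypothesis to count nodes by the type of their unique outgoing end. For a tree-like path $o_2(v) + u_2(v) + s_2(v) = 1$ for every node $v$; combined with the inequalities defining an abstract path, this forces every node to have out-degree at most $1$, so the single outgoing end of $\tau(v)$ is either consumed by the unique outgoing arrow at $v$ (when one exists) or is a free outgoing end of $\tau$. By Conditions 0 and 1 of Definition \ref{def:abstractedge}, interior edges have $u_2 = 0$ and boundary edges have $o_2 = 0$; hence any node $v$ with $u_2(v) = 1$ is a boundary node, and writing $B_u(\tau) = |\{v \in V \,:\, \mu(v) = 1,\ u_2(v) = 1\}|$ we obtain $B(\tau) = B_u(\tau) + B_s(\tau)$. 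For a node $v$ with $u_2(v) = 1$, any outgoing arrow $e$ at $v$ must satisfy $\sigma(e) = u$ (since $u_2(v) = 1$ and $o_2(v) = s_2(v) = 0$), so $e$ is an unstable break; if $v$ has no outgoing arrow, its outgoing end is a free outgoing unstable end of $\tau$. Conversely, the source of any unstable break, and the node carrying any free outgoing unstable end of $\tau$, is a node with $u_2 = 1$. This bijective correspondence gives $B_u(\tau) = U(\tau) + u_2(\tau)$, and therefore $B(\tau) - u_2(\tau) = B_u(\tau) + B_s(\tau) - u_2(\tau) = U(\tau) + B_s(\tau)$, which completes the reduction.

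I expect the only delicate point to be the bookkeeping in the last step: checking that the tree-like condition genuinely makes the assignment sending the outgoing end of a node with $u_2 = 1$ to either an unstable break or a free outgoing unstable end of $\tau$ a bijection, with no node double-counted or omitted. Everything else is a substitution into the formula of Corollary \ref{cor:genweight} together with the vanishing of $s_1(\tau)$ under either hypothesis.
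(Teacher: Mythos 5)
Your proof is correct and follows essentially the same route as the paper: both reduce to Corollary \ref{cor:genweight} together with the observation that, for a tree-like path, each boundary edge has a single outgoing end which is either stable (counted by $B_s$) or unstable (accounted for by an unstable break or a free outgoing end). The only difference is organizational — the paper splits into cases according to whether the path ends in $o$, $s$, or $u$, whereas you keep $u_2(\tau)$ as a parameter and prove the identity $B(\tau) = U(\tau) + u_2(\tau) + B_s(\tau)$ uniformly, which is a slightly cleaner bookkeeping of the same count.
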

\begin{proof}
If $\tau$ ends in $o$ or $s$, then
\begin{align*}
w(\tau) &= 1 + O(\tau) + B(\tau) \\
&= 1 + O(\tau) + (U(\tau) + B_s(\tau)) \\
&= 1 + (O(\tau) + U(\tau)) + B_s(\tau) \\
&= 1 + G(\tau) + B_s(\tau).
\end{align*}
If $\tau$ ends in $u$, then
\begin{align*}
w(\tau) &= 1 + O(\tau) + B(\tau) - 1 \\
&= 1 + O(\tau) + (U(\tau) + 1 + B_s(\tau)) - 1 \\
&= 1 + (O(\tau) + U(\tau)) + B_s(\tau) \\
&= 1 + G(\tau) + B_s(\tau).
\end{align*}
\end{proof}

\begin{proposition}[Classification of good tree-like paths of low weight]
\label{prop:goodcases}
\ \\
(i) A good tree-like path has weight 1 if and only if it has a form in $(a)$ of Figure \ref{fig:treebreak}. \\
(ii) A good tree-like path has weight $2$ if and only if it has a form in $(c)$ or $(d)$ of Figure \ref{fig:treebreak}.
\end{proposition}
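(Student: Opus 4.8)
The plan is to reduce the proposition to arithmetic via the identity $w(\tau) = 1 + G(\tau) + B_s(\tau)$ proved in the corollary just above, which is valid for any good tree-like $\tau$. Since a good path cannot end at a stable vertex, a good tree-like path $\tau$ ends either at an interior vertex — in which case $s_1(\tau) = u_2(\tau) = 0$, so $\tau$ is legal — or at an unstable vertex, in which case $u_2(\tau) = 1$ and $\tau$ is not legal. When $w(\tau) = 1$ the identity forces $G(\tau) = B_s(\tau) = 0$, and when $w(\tau) = 2$ it forces $(G(\tau), B_s(\tau)) \in \{(1,0),(0,1)\}$. In each case I would enumerate the underlying trees and edge-types compatible with these numerical constraints and match them against Figure \ref{fig:treebreak}; the converse direction in each part is a direct weight count on the displayed forms.

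Before the casework I would record a few structural facts, all immediate from Conditions 0 and 1 of Definition \ref{def:abstractedge} together with tree-likeness. Because each edge of a tree-like path has a single outgoing end, every node has at most one outgoing break, so the underlying tree is an in-tree with a unique root $v_0$, and the target vertex of $\tau$ is the outgoing end of $v_0$. An interior break joins two interior nodes; the target of a stable break is a boundary node; the source of an unstable break is a boundary node; and the boundary nodes with stable outgoing end are precisely those counted by $B_s(\tau)$. Finally, cutting $\tau$ at an interior or unstable break produces two good tree-like paths whose weights sum to $w(\tau)$, with the remaining good breaks distributed between them, whereas cutting at a stable break would create a stable end and spoil goodness.

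For part (i), $G(\tau)=0$ makes every break stable, and $B_s(\tau)=0$ then leaves no boundary node with stable output; moreover a boundary node can carry no unstable outgoing break either, since that break would be good. Hence every stable break has interior source and boundary target, and every boundary node must be the root. If $\tau$ ends at interior it is legal of weight $1$, hence a single interior edge by the last claim of Proposition \ref{prop:legalweight}; if $\tau$ ends at unstable then $v_0$ is the unique boundary edge, with unstable output and only interior leaves attached along stable breaks. These are the two forms in $(a)$. For part (ii), if $\tau$ ends at interior it is legal of weight $2$, so by Corollary \ref{cor:legalweight2}(i) it has the form Ia or Ib; intersecting with ``tree-like and good'' shows Ia collapses to a pair of interior edges joined at a single interior break, and Ib to a boundary node with one interior root-child joined by an unstable break and interior leaf-parents joined by stable breaks — these give the forms in $(c)$. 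If instead $\tau$ ends at unstable, it is not legal; when $(G,B_s)=(1,0)$ I would cut at the unique good break and classify the two resulting weight-$1$ good tree-like pieces by part (i), reassembling along that break, and when $(G,B_s)=(0,1)$ the structural facts force the single stable-output boundary edge to feed its stable end into $v_0$, with interior leaves permitted below either boundary edge; running through these possibilities produces the forms in $(d)$.

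The difficulty is not any single deduction but making the enumeration in part (ii) genuinely exhaustive and verifying that it reproduces the figure exactly — in particular, keeping straight which boundary edge hosts which interior leaves and how the distinguished good break (when it exists) interacts with the boundary structure. The clean organizing dichotomy ``ends at interior (hence legal, so Corollary \ref{cor:legalweight2} applies) versus ends at unstable'' is what makes this tractable, and I expect the bulk of the remaining work to lie in the ``ends at unstable'' analysis for weight $2$.
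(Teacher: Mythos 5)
Your proposal is correct, and it rests on the same numerical backbone as the paper's proof, namely the identity $w(\tau) = 1 + G(\tau) + B_s(\tau)$ of \eqref{goodweight}; your part (i) is essentially the paper's argument. For part (ii), however, your organization of the casework is genuinely different. The paper isolates the unique ``rogue'' break (the one break that is not both stable and following an interior edge), cases on its type crossed with the type of the final edge (six cases), and in each case applies the appropriate gluing move, invokes part (i) on the resulting weight-$1$ path, and reverses the move. You instead split first on ``legal (ends at $o$)'' versus ``ends at $u$'': the legal half is dispatched wholesale by Corollary \ref{cor:legalweight2}(i), and the ends-at-$u$ half splits on $(G,B_s)=(1,0)$ versus $(0,1)$, with the former handled by cutting at the unique good break, applying (i) to both weight-$1$ pieces (weights add under cutting, and each piece is good since the severed ends are interior or unstable), and reassembling, and the latter by direct structural analysis forcing two boundary edges chained by a stable break. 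Both routes are exhaustive: your three branches recover exactly the paper's six cases (legal $\leftrightarrow$ Cases 1 and 3; $(1,0)$ $\leftrightarrow$ Cases 2, 4, 5; $(0,1)$ $\leftrightarrow$ Case 6). The trade-off is that the paper's uniform glue-then-unglue step simultaneously produces the gluing data recorded in rows $(e)$ and $(f)$ of Figure \ref{fig:treebreak}, which Corollary \ref{cor:onetwoglue} consumes directly, whereas your cut-at-the-good-break step more transparently exhibits the one-good-break forms as concatenations of two weight-$1$ forms (the fact the structure equation actually needs on the $\tilde D \circ \tilde D$ side), at the cost of treating the no-good-break form $(d)$ by a separate ad hoc argument and leaving the enumeration of gluings of each form to be done afterwards.
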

\begin{proof}
Let $\tau$ be a good tree-like path, with final edge $\ve$.

For (i), if $\tau$ has weight $1$ then by \eqref{goodweight} all breaks of $\tau$ must be stable and follow interior edges.  So if $\ve$ is interior, then no edges may precede $\ve$.  And if $\ve$ is boundary, then each incoming end of $\ve$ is preceded by at most one interior edge.  These two cases yield the two forms in $(a)$.

For (ii), If $\tau$ has weight $2$ then by \eqref{goodweight} all breaks but one are both stable and follow interior edges.  Let $e$ be this rogue break.  Note that if $\ve$ is interior than it is necessarily preceded by an interior or unstable break.  We are therefore left with six mutually exclusive cases:

{\em Case 1}: $e$ is interior and $\ve$ is interior.

{\em Case 2}: $e$ is interior and $\ve$ is boundary.

{\em Case 3}: $e$ is unstable and $\ve$ is interior.

{\em Case 4}: $e$ is unstable, $\ve$ is boundary, and $t(e)$ is interior.

{\em Case 5}: $e$ is unstable, $\ve$ is boundary, and $t(e)$ is boundary.

{\em Case 6}: $e$ is stable and $\ve$ is boundary. \\
Note that in Cases $3$ and $4$, the edge $\ve'$ preceding the unstable break $e$ is boundary and all edges adjacent to $\ve'$ are interior.  Since $\tau$ is good, the interior star of $\ve'$ is therefore legal.

In each case, we claim that $\tau$ must have a particular form.  The six forms are shown sequentially from left to right in the second row of Figure \ref{fig:treebreak}.  In each case, the line of argument is to perform a gluing move on $\tau$, apply (i) to force the form of the resulting good tree-like path of weight 1, and then reverse the gluing move to recover the form of $\tau$.  In Cases 1 and 2, we apply Move Ia at $e$.  In Cases 3 and 4, we apply Move Ib at the edge $\ve'$ preceding $e$.  In Cases 5 and 6, we apply Move II at $e$.  The six resulting paths are shown sequentially in the third row of Figure \ref{fig:treebreak} (the seventh path does not play a role in this argument).  Reversing the gluing move, we obtain the six forms in the second row.
\end{proof}

\begin{figure}[htp]
\centering
\includegraphics[width=130mm]{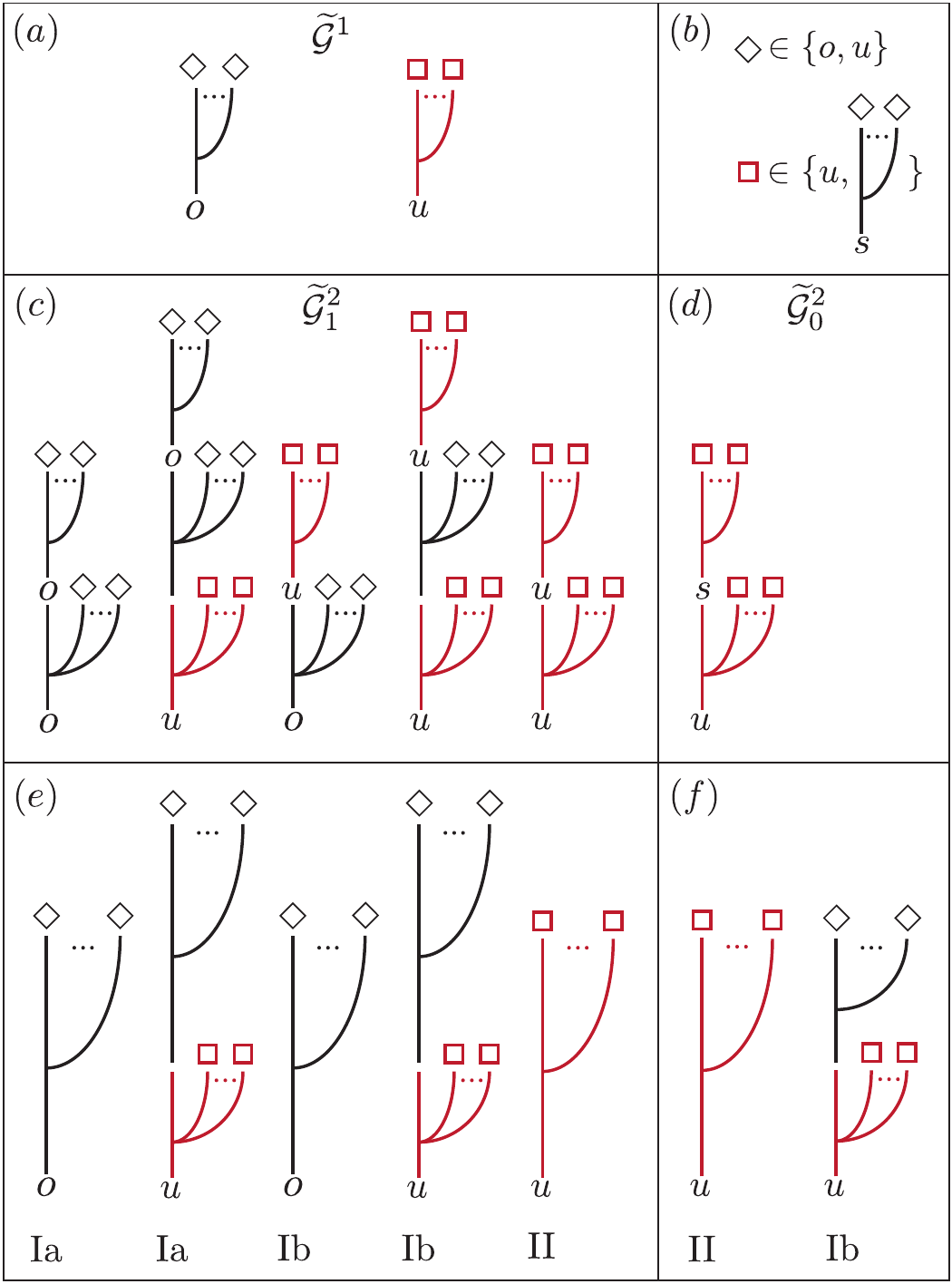}
\caption{The forms of a good tree-like weight 1 path are shown in $(a)$, where the diamond and square symbols are members of the sets in $(b)$ and ellipses indicate any finite number of incoming ends (including zero).  The forms of a good, tree-like weight 2 path are shown in $(c)$ and $(d)$.  Those with one good break are in $(c)$, each admitting a unique gluing shown directly below in $(e)$.  Those with no good break are shown in $(d)$, each admitting the two gluings shown in $(f)$.  The notation $\mathcal{\widetilde G}^k_m$ is introduced in Section \ref{sec:treelike}.}
\label{fig:treebreak}
\end{figure}

The following corollary of Proposition \ref{prop:goodcases} is crucial for generalizing the structure equation to bigraphs.

\begin{corollary}[The key to good breaks]
\label{cor:onetwoglue}
Let $\tau$ be a good tree-like path of weight $2$.  Then $\tau$ has at most one good break.  Furthermore: \\
(i) If $\tau$ has one good break, then $\tau$ can be glued in precisely one way. \\
(ii) If $\tau$ has no good breaks, then $\tau$ can be glued in precisely two ways.
\end{corollary}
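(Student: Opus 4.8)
The plan is to read both parts of the corollary off the classification in Proposition~\ref{prop:goodcases}, with the weight formula \eqref{goodweight} supplying the bound on good breaks. Since $\tau$ is a good tree-like path, \eqref{goodweight} gives $2 = w(\tau) = 1 + G(\tau) + B_s(\tau)$, hence $G(\tau) + B_s(\tau) = 1$. Because $G(\tau)$ is precisely the number of good breaks of $\tau$, this already shows that $\tau$ has at most one good break, and moreover that either $G(\tau) = 1$ and $B_s(\tau) = 0$, or $G(\tau) = 0$ and $B_s(\tau) = 1$. By Proposition~\ref{prop:goodcases}(ii) the first alternative places $\tau$ into one of the six forms of part $(c)$ of Figure~\ref{fig:treebreak}, and the second into one of the forms of part $(d)$. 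The proof of Proposition~\ref{prop:goodcases}(ii) already exhibits, in each case, a valid gluing move applied to $\tau$, so the only thing left to prove is that the number of \emph{distinct} valid gluings is exactly one for a form in $(c)$ and exactly two for a form in $(d)$ --- namely the moves drawn in parts $(e)$ and $(f)$.

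For this count I would go through the forms of parts $(c)$ and $(d)$ one at a time, and for each form list the markers $x \in X = V \cup E$ for which some $\tilde q(\tau, x)$ is defined. The recurring observation that keeps this finite and uniform is: in a good tree-like path of weight $2$, every break other than the (unique, if present) good break is stable and immediately follows an interior edge. Such a break $e$ then has interior source node $s(e)$, while by Condition~$0$ of Definition~\ref{def:abstractedge} its target edge $\tau(t(e))$ is boundary; in particular $e$ is neither between two interior nodes nor between two boundary nodes, so Moves~Ia and II are unavailable at $e$, and inspecting the numerical conditions for Move~Ib rules that move out at the boundary node $t(e)$ as well. Hence every valid gluing is supported within a bounded neighborhood of the good break (for a form in $(c)$), or of the unique boundary edge with stable outgoing end (for a form in $(d)$), which leaves only finitely many local pictures to inspect.

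With this reduction, part (i) follows by walking through the six forms of part $(c)$ --- equivalently, the six cases in the proof of Proposition~\ref{prop:goodcases}(ii) --- and checking that in each one exactly one move survives: Move~Ia at the good break in Cases~1 and 2, Move~Ib at the boundary edge preceding the good break in Cases~3 and 4, and Move~II at the good break in Cases~5 and 6, as shown in part $(e)$. Part (ii) follows likewise from the forms of part $(d)$, where all breaks are stable and there is a single boundary edge with stable outgoing end near which precisely two moves are defined --- a Move~II along the stable break at its outgoing end and a Move~Ib contracting the relevant interior star --- as shown in part $(f)$. I expect the localization step of the second paragraph to be the only real obstacle, and within it the careful verification that a stable break following an interior edge never supports Move~Ib; once that is settled, the case-by-case count against Figure~\ref{fig:treebreak} is routine.
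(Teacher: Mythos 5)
Your overall route is the same as the paper's: bound the number of good breaks by the weight formula \eqref{goodweight}, invoke the classification of Proposition \ref{prop:goodcases} to reduce to the forms in $(c)$ and $(d)$ of Figure \ref{fig:treebreak}, and then count the valid gluings form by form. The paper's own proof simply reads the counts off parts $(e)$ and $(f)$ of the figure, so the only place you go beyond it is the ``recurring observation'' used to justify those counts --- and that observation is false as stated. You claim that for a stable break $e$ following an interior edge, ``inspecting the numerical conditions for Move Ib rules that move out at the boundary node $t(e)$.'' But such breaks are precisely the incoming arrows of the interior star that Move Ib contracts: in Cases 3 and 4 the unique valid gluing is Move Ib at the boundary node carrying $\ve'$, the edge preceding the good break, and that node may perfectly well be the target of several stable breaks coming from interior edges (the ellipses in Figure \ref{fig:treebreak} allow any number of them); likewise in form $(d)$ one of the two gluings is Move Ib at the boundary edge with stable outgoing end, which receives its stable incoming ends through exactly such breaks. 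Taken literally, your observation would eliminate these gluings and leave Cases 3--4 with no valid move at all, contradicting part (i).

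The localization you want is true, but for a different reason: it comes from the outgoing rather than the incoming ends. In a good tree-like path each edge has a single outgoing end; for Move Ib at a boundary node $v$ the subpath $\mathrm{Star}^o(v)$ must be legal, which forces the outgoing end of $\tau(v)$ either to be unstable and consumed by a break into an interior node --- an unstable, hence good, break, so $v$ is the source of the rogue break --- or to be stable and consumed, so that $\tau(v)$ is the unique boundary edge counted by $B_s(\tau)$ in form $(d)$. Together with your (correct) remark that Moves Ia and II are only available at the rogue break itself, this pins every valid gluing to the rogue break and yields the counts of $(e)$ and $(f)$. Two smaller slips: part $(c)$ contains five forms and part $(d)$ one (six in total), and Case 6 of Proposition \ref{prop:goodcases} is the form in $(d)$ with no good break, so it belongs to part (ii) of the corollary rather than to your list for part (i).
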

\begin{proof}
The path $\tau$ has at most one good break by \eqref{goodweight}.  By Proposition \ref{prop:goodcases}, $\tau$ has one of the six forms shown in $(c)$ and $(d)$ of Figure \ref{fig:treebreak}.  Each form in $(c)$ has a good break and may be glued in precisely the one way shown directly below in $(e)$.  The form in $(d)$ has no good break and may be glued in precisely the two ways shown in $(f)$.
\end{proof}


\section{Bigraphs}
\label{sec:bigraphs}
We now introduce the bigraph $\tch$ associated to a higraph $\ch$.  The ``b'' stands for blow-up or boundary and ``bi'' signifies directedness.  The bigraph $\tch = (\tcv, \tce)$ covers the higraph $\ch = (\cv, \ce)$ in the sense that there is a $3$-to-$1$ map $\eta: \tcv \to \cv$ which induces a map on edges that preserves the size of source and target.  The three lifts of a vertex in $\cv$ are labeled by the three elements of the abstract vertex set $\cv_{ab}$, and are called interior, unstable, and stable.  The $2^{|I| + |J|+1}$ lifts of an edge $(I,J) \in \ce$ are precisely the compatible abstract edges, of which $2^{|I| + |J|}$ are interior and $2^{|I| + |J|}$ are boundary.  We make this precise in the following definitions, which are followed by examples in Figures \ref{fig:bigraph} and \ref{fig:pantsgraph}.

\begin{definition}
The {\em bigraph} $\tch = (\tcv, \tce)$ associated to the higraph $\ch$ consists of a vertex set
$$\tcv = \cv \times \{o,u,s\}$$
and an edge set
$$\tce \subset \mathbb{F} \times 2^{\tcv} \times 2^{\tcv}$$
with $(\mu, \tilde{I}, \tilde{J}) \in \tce$ if and only if $|\eta(\tilde{I})| = |\tilde{I}|$, $|\eta(\tilde{J})| = |\tilde{J}|$, $(\eta(\tilde{I}), \eta(\tilde{J})) \in \ce$ and $g(\tilde{I}, \tilde{J}, \mu) \in \ce_{ab}$.

Here $\eta : 2^{\tcv} \to 2^{\cv}$ is induced by the projection $\eta: \tcv \to \cv$ and
$$g: \mathbb{F} \times 2^{\tcv} \times 2^{\tcv} \to \mathbb{F} \times (\mathbb{N} \times \mathbb{N} \times \mathbb{N}) \times (\mathbb{N} \times \mathbb{N} \times \mathbb{N})$$
is defined by
$$f(\mu, \tilde{I}, \tilde{J}) = (\mu, (\tilde o(\tilde{I}),\tilde u(\tilde{I}),\tilde s(\tilde{I})),(\tilde o(\tilde{J}),\tilde u(\tilde{J}),\tilde s(\tilde{J})))$$
where $\tilde o(\tilde{I}) = |\{(K, o) \in \tilde{I}\}|$, $\tilde u(\tilde{I}) = |\{(K, u) \in \tilde{I}\}|$, and $\tilde s(\tilde{I}) = |\{(K, s) \in \tilde{I}\}|$.
\end{definition}

\begin{definition}
The  {\em boundary bigraph}  $\bch = (\bcv, \bce)$ associated to the higraph $\ch$ is the directed hypergraph given by the boundary vertices and boundary edges of $\tch$.  Equivalently, 
$\bch$ has the vertex set
$$\bcv = \cv \times \{u,s\}$$
and the edge set
$$\bce \subset 2^{\bcv} \times 2^{\bcv}$$
with $(\bar{I}, \bar{J}) \in \bce$ if and only if $ |\eta(\bar{I})| = |\bar{I}|$, $|\eta(\bar{J})| = |\bar{J}|$, and $(\eta(\bar{I}), \eta(\bar{J})) \in \ce$.
\end{definition}

\begin{figure}[htp]
\centering
\includegraphics[width=135mm]{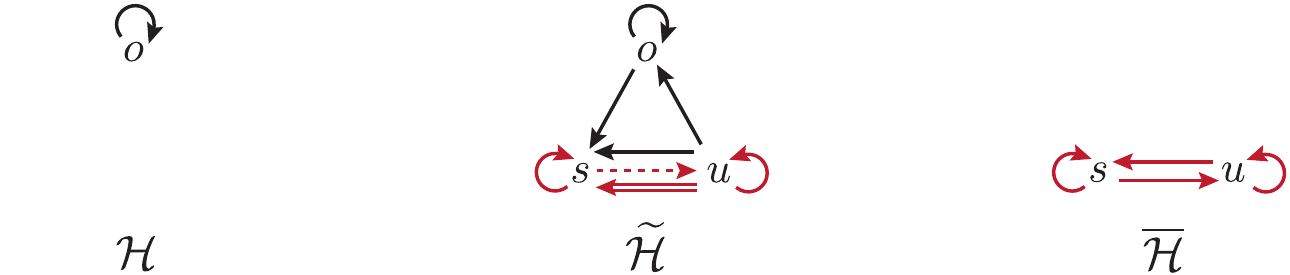}
\caption{The higraph $\ch$ above consists of a single vertex and a single loop.  The loop lifts to four interior edges and four boundary edges in the bigraph $\tch$.  The dotted edge has weight 0,  the doubled edge has weight 2, and all other edges have weight 1.  Only the boundary vertices and edges are present in the boundary bigraph $\bch$.}
\label{fig:bigraph}
\end{figure}

\begin{figure}[htp]
\centering
\includegraphics[width=155mm]{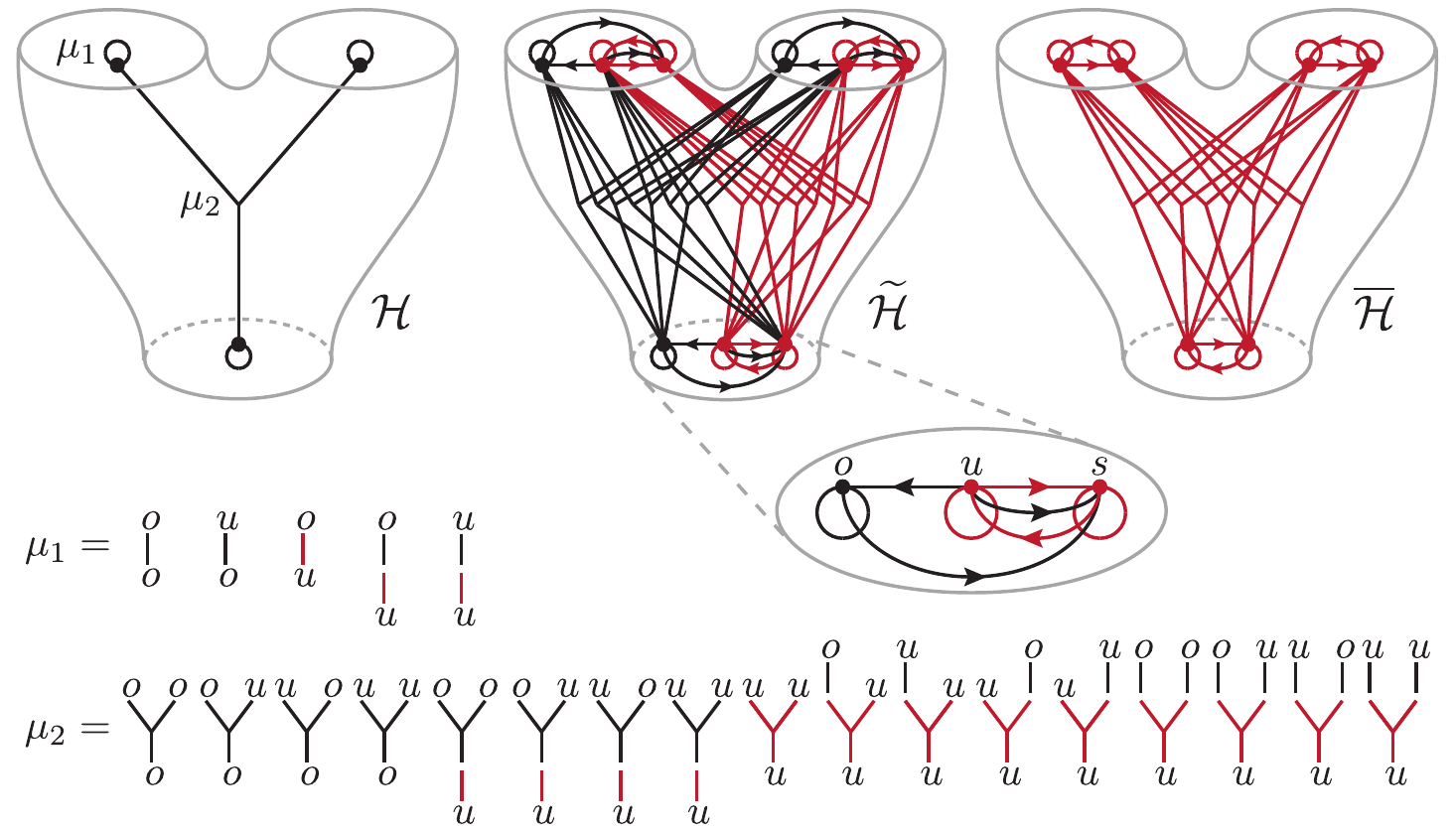}
\caption{The higraph $\ch$ above consists of three vertices and four edges.  Three edges are loops and the remaining edge $\ve$ has two vertices as source and one vertex as target.  The edge $\ve$ lifts to eight interior edges and eight boundary edges in the bigraph $\tch$.  Only the boundary vertices and edges are present in the boundary bigraph $\bch$.  Edge weights are not indicated.}
\label{fig:pantsgraph}
\end{figure}

\subsection{Paths}
Note that $\tch$ and $\bch$ are not higraphs because there are non-loop cycles in $\tch$ and $\bch$, though the edges of a cycle project to a common loop in $\ch$.  Furthermore, there are two lifts of each edge $(I,J) \in \ce$ to $\tce$ with the same source and target, namely $(I \times \{u\}, J \times \{s\},0)$ and $(I \times \{u\}, J \times \{s\},1)$.  Nevertheless, we may define paths just as before.

\begin{definition}
A {\em path} $\tr = (T, \tr)$ in a bigraph $\tch$ consists of a non-empty, directed tree $T = (V, E)$ together with a map $\tr: V \to \tce$ such that the following properties hold.
\begin{enumerate}
\item The map $\tr$ induces a well-defined map $\tr: E \to \tcv$ such that $$\{\tr(e)\} = t(\tr(s(e))) \cap s(\tr(t(e))).$$
\item If distinct arrows $e$ and $e'$ have the same source or the same target, then $\tr(e) \neq \tr(e')$.
\end{enumerate}
Paths $\rho$ and $\rho'$ are {\em isomorphic} if there is a digraph isomorphism $f: T \to T'$ of the associated directed trees such that $\tr = \tr' f$.
\end{definition}

As with higraphs, paths in a bigraph are unique up to unique isomorphism and we regard isomorphic paths as the same.  We may similarly define the notion of a path $\br$ in $\bch$.

Let $\tcp$ denote the set of paths in $\tch$, and let $\bcp$ denote the set of paths in $\bch$.  The operations of concatenation, gluing, and refinement carry over without change.  Since each path $\tilde{\rho}$ determines an abstract path $g(\tilde{\rho})$, we may pull back the three gluing moves from abstract paths to bigraph paths.

\begin{definition}
A {\em $k$-step gluing sequence} on a path $\tr_1$ is a $k$-tuple $$(x_1, x_2, \dots, x_k)$$ with $x_i$ a valid marker on $\tr_i$ and $\tr_{i+1} = \tilde q(\tr_i, x_i)$.  We may use the notation
$$\tr_1 \xrightarrow{x_1} \tr_2 \xrightarrow{x_2} \cdots \xrightarrow{x_k} \tr_{k+1}$$ to emphasize the paths along the way.
\end{definition}

As with the higraph case, gluing sequences in a bigraph are unique up to unique isomorphism.  Unlike the higraph case, the number of $k$-step gluing sequences between two fixed paths in a bigraph may not be a multiple of $k!$.  Indeed, Figure \ref{fig:eightPaths} gives an example with exactly eight $3$-step gluing sequences for the bigraph in Figure \ref{fig:bigraph}.  However, the analogue of Lemma \ref{lem:factorial1} does hold when $k = 2$.

\begin{lemma}
\label{lem:even}
The number of $2$-step gluing sequences between any two fixed paths in a bigraph is even.
\end{lemma}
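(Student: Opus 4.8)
The plan is to exhibit a fixed-point-free involution on the finite set $\mathcal{S}$ of $2$-step gluing sequences $\tr_1 \xrightarrow{x_1} \tr_2 \xrightarrow{x_2} \tr_3$ between the two fixed paths. Write $R_1 \subseteq T_1$ for the subtree contracted by the first move (a single arrow for Moves Ia and II, an interior star for Move Ib) and $v^*_1 \in V_2$ for the node it is contracted to. Call the sequence \emph{separated} if $v^*_1$ does not lie in the subtree $R_2 \subseteq T_2$ contracted by the second move, and \emph{entangled} otherwise. In the separated case the preimage $\tilde R_2 \subseteq T_1$ of $R_2$ is disjoint from $R_1$, and the key observation is that the validity and the effect of each of the two moves depend only on the labels and adjacencies of the ambient tree within one step of the contracted subtree; since $T_1$ is a tree and $R_1, \tilde R_2$ are disjoint, performing one move does not disturb the data governing the other (here one uses that Move Ia and Move Ib produce interior nodes and Move II a boundary node, together with acyclicity, to rule out the only dangerous configurations). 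Hence the two moves commute: reversing their order produces another valid $2$-step sequence from $\tr_1$ to $\tr_3$, whose intermediate path ($\tr_1$ with $\tilde R_2$ contracted) is different because $R_1 \neq \tilde R_2$. Reversal is an involution on the separated sequences with no fixed points. This is the bigraph analogue of the reordering that yields the factor of $k!$ in Lemma \ref{lem:factorial1}.

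It remains to pair up the entangled sequences. If $S$ is entangled then $R := \tilde R_2$ is a connected subtree of $T_1$ containing $R_1$, and the composite of the two moves contracts $R$ to a single node whose label --- being a node of the fixed path $\tr_3$ --- is determined; localizing to the sub-path $\tr_1|_R$, the sequence $S$ becomes a $2$-step gluing of $\tr_1|_R$ down to a single edge $\ve^*$, and it suffices to show the number of such is even for each $R$. By Lemma \ref{lem:glueweight} we have $w(\tr_1|_R) = w(\ve^*) + 2 \le 4$, so Corollary \ref{cor:genweight} bounds the combinatorial complexity of $\tr_1|_R$ and reduces the matter to a finite case analysis over the possible shapes. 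In most shapes the reversal of the previous paragraph still applies (for instance when both moves are of type Ia, exactly as in the higraph setting), so that entangled sequences pair off among themselves in the same way. The real work is confined to the shapes in which, say, the first move is a Move II completed by a Move Ib at the freshly merged node (or the analogous Move-Ia-then-Move-Ib configuration), where the naive reversal is not even defined; there one must produce a second, distinct $2$-step gluing of the same shape --- typically a pair of Move Ib's performed in either order --- and verify that it is valid.

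The main obstacle is precisely this last step. Move Ib is non-local: it contracts an entire interior star whose membership changes when a neighbouring move is performed first, which is why neither the separated/entangled dichotomy nor the pairing of entangled sequences is automatic (and, indeed, why the $k!$-divisibility of Lemma \ref{lem:factorial1} fails for $k \ge 3$, as Figure \ref{fig:eightPaths} shows). The heart of the proof is therefore a bookkeeping argument showing that whenever a Move-II-then-Move-Ib gluing of a given shape exists, the boundary-obstruction conditions defining Move Ib force the boundary endpoints of the contracted piece each to admit a Move Ib, producing a companion $2$-step gluing of the same shape; combined with the reversal argument in the remaining shapes, this shows the entangled sequences come in pairs, completing the involution.
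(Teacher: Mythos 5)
Your overall strategy --- a fixed-point-free involution on the set of $2$-step gluing sequences given by reordering the two moves, split according to whether the contracted regions interact --- is the same as the paper's (Lemma \ref{lem:even} is the $k=2$ case of Proposition \ref{prop:transpose}, proved via the marker calculus of Section \ref{sec:glueseqbi}). Your separated case and the entangled cases where ``reversal still applies'' correspond to the non-interacting case and to columns $(a)$--$(d)$ of Figure \ref{fig:glueTwiceB}, and are essentially right. But the one case you yourself single out as ``the heart of the proof'' --- pairing a Move-II-then-Move-Ib sequence with a companion --- is exactly where the paper does all of its work (the definition of $\tilde\pi^{-1}$, cases (2d)/(2e) of Lemma \ref{lem:valid}, and Figures \ref{fig:glueTwice} and \ref{fig:glueTwiceNope}), and you leave it as a described-but-unexecuted ``bookkeeping argument.'' That is a genuine gap, not a detail: Figure \ref{fig:glueTwiceNope} shows that flipping a single vertex from unstable to stable can invalidate both sequences simultaneously, so the required ``the sequence exists if and only if its companion exists'' is delicate and must actually be checked.

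Moreover, the sketch you give of that bookkeeping is incorrect as stated. You claim the boundary-obstruction conditions force \emph{each} boundary endpoint of the Move II arrow to admit a Move Ib, so that the companion is ``a pair of Move Ib's performed in either order.'' In fact exactly one endpoint admits Move Ib on $\tr_1$: if the break is unstable, its source has an outgoing unstable end filled by an arrow to a \emph{boundary} node, so its interior star is illegal and only the target is a valid Ib marker (dually for a stable break); the other endpoint becomes Ib-able only \emph{after} the first Ib has been performed and has created a new interior neighbor. This is not a technicality --- if both endpoints admitted Move Ib on $\tr_1$, the local count would be three (II-then-Ib plus two orders of Ib-then-Ib), which is odd and would destroy the very parity you are proving. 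The paper's resolution is to define the involution not on sequences directly but on $2$-tuples of markers on the \emph{initial} path: the pullback $\tilde\pi^{-1}$ sends the merged node of a Move II to the one correct endpoint (cases (2d)/(2e)), Proposition \ref{prop:biject2} identifies gluing sequences with completely valid tuples, and transposition of tuples is the fixed-point-free involution. To complete your argument you would need (i) the one-endpoint claim above, (ii) the equivalence ``II-then-Ib is valid $\iff$ Ib-then-Ib is valid,'' and (iii) that the two sequences terminate in the same path; none of these is established in your write-up.
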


This is a special case of Proposition \ref{prop:transpose}, whose proof we defer to Section \ref{sec:glueseqbi}.

\subsection{Path DGA}

\begin{definition}
The {\em path DGA} of the bigraph $\tch$ over the field $\mathbb{F}$ consists of a graded algebra $\tca$ equipped with a differential $\tilde{\delta} : \tca \to \tca$.  The algebra $\tca$ has underlying vector space $\mathbb{F}\langle\tcp\rangle$, with the subspace $\tca^k$ in grading $k$ spanned by the set $\tcp^k$ of paths of weight $k$.  Multiplication is given by concatenation as before.

The value of $\tilde\delta$ on an interior edge $\tve$ is the sum of all refinements of $\tve$ to a path of weight $2$.  The value of $\tilde\delta$ on a boundary edge $\bve$ is the sum of all refinements of $\bve$ to a boundary path of length 2.
\end{definition}
 
The differential $\tilde\delta$ has degree 1, since a boundary refinement of a boundary edge $\bve$ has length 2 if and only if it has weight $w(\bve) + 1$ by \eqref{genweight}.  We may define $\tilde\delta$ directly on a path $\tilde\rho = ((V,E), \tilde\rho)$ by
\begin{align}
\label{eqn:tdelta}
\tilde\delta \tilde{\rho} = \sum_{v \in V^0} \sum_{\tr' \in \tcp^2} r((\tr, v), \tr') + \sum_{v \in V^1} \sum_{\br' \in \bcp^2} r((\tr, v), \br').
\end{align}
We now define the adjoint map $\tilde\partial$ with respect to the symmetric, bilinear pairing $$\langle \, \cdot \,, \, \cdot \, \rangle : \tca \times \tca \to \mathbb{F}$$ with $\langle \tr_1, \tr_2\rangle = 1$ if and only if $\tr_1 = \tr_2$.

\begin{definition}
\label{def:partialb}
The map $\tilde\partial: \tca \to \tca$ is defined on the path $\tilde\rho$ with underlying tree $T=(V,E)$ by
\begin{align}
\label{eqn:tpartial}
\tilde\partial \tr = \sum_{x \in X} \tilde{q}(\tr, x)
\end{align}
where $\tilde{q}(\tau, x)$ is defined to be zero on invalid input.
\end{definition}

\begin{theorem}
\label{thm:deltatildesq}
We have $$\langle \tilde\partial a, b \rangle = \langle a, \tilde\delta b \rangle$$
for all $a, b \in \tca$.  Furthermore, $\tilde\partial^2 = 0$ and $\tilde\delta^2 = 0$.
\end{theorem}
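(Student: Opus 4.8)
The plan is to mirror the structure of the proof in the higraph case (Proposition after Definition \ref{def:partial}) while accounting for the fact that $\tilde\delta$ now treats interior and boundary nodes differently, as in \eqref{eqn:tdelta}. The adjoint identity $\langle \tilde\partial a, b\rangle = \langle a, \tilde\delta b\rangle$ should follow by bilinearity once we check it on a pair of paths $\tr_1, \tr_2$. By \eqref{eqn:tpartial}, $\langle \tilde\partial \tr_1, \tr_2\rangle$ counts markers $x \in X$ on $\tr_1$ with $\tilde q(\tr_1, x) = \tr_2$; equivalently, it counts $1$-step gluing sequences $\tr_1 \to \tr_2$. By \eqref{eqn:tdelta}, $\langle \tr_1, \tilde\delta\tr_2\rangle$ counts the ways to refine an interior node of $\tr_2$ to a weight-$2$ path, or a boundary node of $\tr_2$ to a length-$2$ boundary path, so as to produce $\tr_1$. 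The key point is that the three gluing moves are exactly the inverses of these refinements: Move Ia and Move Ib both land an interior edge (so they reverse a refinement of an interior node to a weight-$2$ path, by Corollary \ref{cor:legalweight2}(i)), while Move II reverses a refinement of a boundary node to a length-$2$ boundary path (Corollary \ref{cor:legalweight2}(ii)). I would invoke the analogue of \eqref{Winverse} for bigraph paths to conclude that gluing and refinement set up a bijection between the two sets being counted, hence the equality.

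Next, for $\tilde\partial^2 = 0$: it suffices to show $\langle \tilde\partial^2 \tr_1, \tr_3\rangle = 0$ for all paths $\tr_1, \tr_3$ in $\tch$. This term counts $2$-step gluing sequences $\tr_1 \xrightarrow{x_1} \tr_2 \xrightarrow{x_2} \tr_3$, and by Lemma \ref{lem:even} the number of such sequences between two fixed paths in a bigraph is even. Hence $\langle \tilde\partial^2 \tr_1, \tr_3\rangle = 0$ in $\mathbb{F}$, so $\tilde\partial^2 = 0$. Finally, $\tilde\delta^2 = 0$ follows formally from the adjoint identity and $\tilde\partial^2 = 0$:
$$\langle \tilde\delta^2 a, b\rangle = \langle \tilde\delta a, \tilde\partial b\rangle = \langle a, \tilde\partial^2 b\rangle = \langle a, 0\rangle = 0$$
for all $a, b \in \tca$, so $\tilde\delta^2 = 0$.

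The main obstacle is making the adjoint identity fully rigorous, specifically the claim that every $1$-step gluing of a bigraph path is the inverse of a weight-$2$ (or length-$2$ boundary) refinement and conversely. This requires knowing that the three gluing moves exhaust the "one-step" contractions of a bigraph path that decrease weight by exactly $1$, and that each is invertible by a unique refinement of the contracted node; the content here is precisely the classification of legal weight-$2$ paths (and length-$2$ boundary paths) in Corollary \ref{cor:legalweight2}, together with the observation from Lemma \ref{lem:glueweight} that each move drops weight by $1$. One must also check the degree bookkeeping — that a boundary refinement of a boundary edge $\bve$ has length $2$ iff it has weight $w(\bve)+1$ — which is exactly the remark following the path DGA definition, using \eqref{genweight}. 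Everything else reduces to the combinatorial input already assembled (Lemma \ref{lem:even}, which is itself deferred to Section \ref{sec:glueseqbi} as a special case of Proposition \ref{prop:transpose}), so the proof is essentially a transcription of the higraph argument with these replacements.
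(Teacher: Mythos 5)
Your proposal matches the paper's proof essentially verbatim: the adjoint identity is verified on pairs of paths by identifying $1$-step gluings with refinements via Corollary \ref{cor:legalweight2}, $\tilde\partial^2=0$ follows from the evenness of $2$-step gluing sequences (Lemma \ref{lem:even}), and $\tilde\delta^2=0$ follows by adjointness. The additional points you flag as needing care (the weight bookkeeping from Lemma \ref{lem:glueweight} and \eqref{genweight}) are exactly the inputs the paper relies on, so the argument is complete as written.
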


\begin{proof}
For the first statement, by linearity it suffices to verify that $$\langle \tilde\partial \tr_1, \tr_2 \rangle = \langle \tr_1, \tilde\delta \tr_2 \rangle$$ for all paths $\tr_1$ and $\tr_2$. The left-hand side counts all 1-step gluing sequences $\tr_1 \xrightarrow{x_1} \tr_2$.  The right-hand side counts all ways to refine an interior edge of $\tr_2$ to a path of weight 2 to obtain $\tr_1$, plus all ways to refine a boundary edge of $\tr_2$ to a boundary path of length 2 to obtain $\tr_1$.  These counts coincide by Corollary \ref{cor:legalweight2}.

To prove $\tilde\partial^2 = 0$, it suffices to show that $\langle \tilde\partial^2\tr_1, \tr_3\rangle$ vanishes for all paths $\tr_1$ and $\tr_3$.  Indeed, the term $\langle \tilde\partial^2\tr_1, \tr_3\rangle$ counts the number of 2-step gluing sequences from $\tr_1$ to $\tr_3$, which is even by Lemma \ref{lem:even}.  Now $\tilde\delta^2 = 0$ by adjointness as before.
\end{proof}

\subsection{Tree-like higraphs and bigraphs}
\label{sec:treelike}

The following notions for abstract paths readily carry over to bigraphs as well.

\begin{definition}
A higraph or bigraph is {\em tree-like} if the target of each edge consists of exactly one vertex.  A path in a bigraph is {\em good} if each end is interior or unstable.
A break is {\em good} if it is interior or unstable.
\end{definition}

Clearly the bigraph $\tch$ is tree-like if and only if the underlying higraph $\ch$ is tree-like.  Let $\tcg^k$ be the set of good paths in $\tch$ of weight $k$.  Let $\tcg^k_m$ be the set of good paths in $\tch$ of weight $k$ with $m$ good breaks.  Note that $\tcg^1 = \tcg^1_0$ and $\tcg^2 = \tcg^2_1 \cup \tcg^2_0$ by \eqref{goodweight}; the forms of these paths are shown in Figure \ref{fig:treebreak}.

\begin{definition}
We define the element $\tilde{D} \in \tca$ by $$\tilde{D} = \sum_{\tilde\rho \in \tcg^1} \tilde\rho.$$
\end{definition}

\begin{theorem}
\label{thm:bistructure}
If $\ch$ is tree-like, then the element $\tilde{D}$ satisfies the structure equation:
$$\tilde\delta \tilde D = \tilde D \circ \tilde D.$$
\end{theorem}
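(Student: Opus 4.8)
The plan is to verify the structure equation coefficient by coefficient in the weight-$2$ part of $\tca$, using the adjoint $\tilde\partial$ from Theorem \ref{thm:deltatildesq} together with the classification of good tree-like paths of weight $2$. Since $\tilde D \in \tca^1$ and $\tilde\delta$ raises weight by $1$, both $\tilde\delta\tilde D$ and $\tilde D\circ\tilde D$ lie in $\tca^2$, so it suffices to check that $\langle\tilde\delta\tilde D,\tr\rangle = \langle\tilde D\circ\tilde D,\tr\rangle$ for every path $\tr$ of weight $2$. As $\ch$ is tree-like, $\tch$ is tree-like and every path in it is tree-like; in particular $\tr$ is tree-like.

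First I would rewrite the left-hand side. By the adjointness in Theorem \ref{thm:deltatildesq} and symmetry of the pairing, $\langle\tilde\delta\tilde D,\tr\rangle = \langle\tilde D,\tilde\partial\tr\rangle = \sum_{x\in X}\langle\tilde D,\tilde q(\tr,x)\rangle$, where $X$ is the marker set of $\tr$. One checks directly from the definitions that each of the three gluing moves replaces a subpath of $\tr$ by a single edge having the same source and target as that subpath, hence preserves the source and target of $\tr$ as a whole; and by Lemma \ref{lem:glueweight} it lowers weight by $1$. Therefore, whenever $\tilde q(\tr,x)$ is defined it is a tree-like path of weight $1$ with the same ends as $\tr$. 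Two consequences: if $\tr$ is not good then no $\tilde q(\tr,x)$ is good, so $\langle\tilde D,\tilde\partial\tr\rangle = 0$; and if $\tr$ is good then $\tilde q(\tr,x)\in\tcg^1$ for every valid marker $x$, so $\langle\tilde D,\tilde\partial\tr\rangle$ equals, modulo $2$, the number of markers at which $\tr$ can be glued.

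Next I would rewrite the right-hand side. A term $c(\tr_1,\tr_2)$ of $\tilde D\circ\tilde D$ is formed from $\tr_1,\tr_2\in\tcg^1$ by joining them at one new break; conversely, an ordered factorization $c(\tr_1,\tr_2) = \tr$ is the same datum as a break $e$ of $\tr$ together with the requirement that the two pieces obtained by cutting $\tr$ at $e$ both lie in $\tcg^1$. Cutting at $e$ adds to each piece a single new end of type $\sigma(e)$ and splits the weight additively; since a good tree-like path has weight at least $1$ by \eqref{goodweight} and the total weight is $2$, both pieces land in $\tcg^1$ exactly when both are good, which happens exactly when $\sigma(e)\in\{o,u\}$, i.e.\ when $e$ is a good break (the remaining ends are inherited from $\tr$, which also forces $\tr$ to be good). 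Hence $\langle\tilde D\circ\tilde D,\tr\rangle$ vanishes unless $\tr$ is good, and otherwise equals the number of good breaks of $\tr$ modulo $2$.

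Finally I would combine the two computations using Corollary \ref{cor:onetwoglue}. Both sides vanish unless $\tr$ is a good tree-like path of weight $2$, so assume this. If $\tr$ has exactly one good break it can be glued in exactly one way, so both sides equal $1$; if $\tr$ has no good break it can be glued in exactly two ways, so the left-hand side is $2\equiv 0$ while the right-hand side is $0$; by the corollary these are the only possibilities. In every case the coefficients agree, establishing $\tilde\delta\tilde D = \tilde D\circ\tilde D$. The part needing the most care is the bookkeeping in the two middle paragraphs: matching valid gluing markers of $\tr$ with the good weight-$1$ paths they produce, and matching concatenation factorizations of $\tr$ with its good breaks, while checking that goodness and tree-likeness are preserved under gluing and cutting. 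The genuinely hard combinatorial input — that a good tree-like weight-$2$ path has at most one good break and admits exactly one or two gluings accordingly — is already supplied by Proposition \ref{prop:goodcases} and Corollary \ref{cor:onetwoglue}.
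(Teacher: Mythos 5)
Your proposal is correct and follows essentially the same route as the paper: both arguments reduce to showing that the coefficient of a path $\tr$ in $\tilde\delta\tilde D$ is the number of valid gluings of $\tr$ (via adjointness with $\tilde\partial$) while its coefficient in $\tilde D\circ\tilde D$ is the number of good breaks, and then both invoke Corollary \ref{cor:onetwoglue} to match these counts, with the weight-$2$ paths having no good break cancelling mod $2$. The paper states this as the single identity $\tilde\delta\tilde D=\sum_{\tr\in\tcg^2_1}\tr+\sum_{\tr\in\tcg^2_0}2\tr=\sum_{\tr\in\tcg^2_1}\tr=\tilde D\circ\tilde D$; you have simply spelled out the bookkeeping (preservation of ends and goodness under gluing and cutting) in more detail.
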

\begin{proof}
We claim
\begin{align}
\label{bistructure2}
\tilde\delta \tilde D = \sum_{\tr\in \tcg^2_1} \tr + \sum_{\tr\in \tcg^2_0} 2\tr = \sum_{\tr\in \tcg^2_1} \tr = \tilde D \circ \tilde D.
\end{align}
The left-hand inequality holds by Corollary \ref{cor:onetwoglue}.  The middle equality holds over $\mathbb{F}$.  The right-hand equality holds because each good path of weight 2 with one good break arises as a product of two (not necessarily distinct) good paths of weight 1 in a unique way (as specified by cutting at the good break).
\end{proof}

\subsection{Balanced higraphs and bigraphs}
The reader may check that Theorem \ref{thm:bistructure} is always false if $\ch$ is not tree-like.  On the other hand, there is a duality at play, which switches the roles of unstable and stable vertices.  Dual to tree-like higraphs are {\em rake-like} higraphs, where the source of each edge consists of a single vertex.  If $\ch$ is rake-like, and we re-define good breaks to be interior or stable ones, then Theorem \ref{thm:bistructure} continues to hold.  More generally, we can independently define the notion of good at each vertex of $\ch$, provided each edge of $\ch$ has a compatible shape.

\begin{definition}
A {\em vertex assignment} on a higraph $\ch = (\cv, \ce)$ is a map $\psi: \cv \to \{1,2\}$ such that
\begin{align}
\label{balanced}
|s(\ve) \cap \psi^{-1}(2)| + |t(\ve) \cap \psi^{-1}(1)| = 1
\end{align}
for every edge $\ve \in \ce$.  In this case, we say that $\ch$ is {\em balanced} by $\psi$.  The set of $\psi$-good vertices of $\tch$ is given by
$$\tcv_\psi = (\psi^{-1}(1) \times \{o,u\}) \cup  (\psi^{-1}(2) \times \{o,s\}) \subset \tcv.$$
A path $\tr$ is $\psi$-good if each end is $\psi$-good.  A break $e$ of $\tr$ is $\psi$-good if $\tr(e)$ is $\psi$-good.  Let $\tcp^k(\psi)$ denote the set of $\psi$-good paths of weight $k$.  Let $\tcp^k_m(\psi)$ denote the set of $\psi$-good paths of weight $k$ with $m$ $\psi$-good breaks.
\end{definition}

Note if $\psi \equiv 1$ then $\tcp^k_m(\psi) = \tcg^k_m$, and $\psi$ balances $\ch$ if and only if $\ch$ is tree-like.  If $\psi \equiv 2$, then $\psi$-good breaks are interior or stable ones, and $\psi$ balances $\ch$ if and only if $\ch$ is rake-like.

\begin{remark}
The proof of the monopole surgery exact triangle is modeled by a higraph with $\psi = 1$ for three vertices and $\psi = 2$ for four vertices, as explained in Figure \ref{fig:exactTriangle}.
\end{remark}

\begin{definition}
We define the element $\tilde{D}_\psi \in \tca$ by $$\tilde{D}_\psi = \sum_{\tilde\rho \in \tcp^1(\psi)} \tilde\rho.$$
\end{definition}

\begin{corollary}
If $\ch$ is balanced by $\psi$, then the element $\tilde{D}$ satisfies the structure equation:
$$\tilde\delta \tilde D_\psi = \tilde D_\psi \circ \tilde D_\psi.$$
\end{corollary}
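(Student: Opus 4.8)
The plan is to reduce the statement to Theorem \ref{thm:bistructure} via the unstable/stable duality. Given $\ch = (\cv,\ce)$ balanced by $\psi$, let $\ch' = (\cv,\ce')$ be obtained by \emph{reversing} every vertex of $\psi^{-1}(2)$: replace each edge $\ve = (I,J)$ by
$$\ve' = \bigl((I \cap \psi^{-1}(1)) \cup (J \cap \psi^{-1}(2)),\ (J \cap \psi^{-1}(1)) \cup (I \cap \psi^{-1}(2))\bigr).$$
By \eqref{balanced} the target of $\ve'$ is a singleton, so $\ch'$ is tree-like; and I would check that $\ch'$ is again a higraph, the point being that a failure of transitivity or acyclicity for $\ce'$ forces, back in $\ch$, either two sources or two targets whose intersection has size at least two — impossible by transitivity of $\ch$ — or a pair of edges composable in both orders, which by acyclicity of $\ch$ is a single common loop (and loops are fixed by reversal).

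Next, I would lift the reversal to the bigraph: let $\Phi$ be the involution of $\tcv = \cv \times \{o,u,s\}$ fixing every interior vertex and every vertex over $\psi^{-1}(1)$, and swapping $(K,u)\leftrightarrow(K,s)$ for each $K\in\psi^{-1}(2)$. Applied edgewise — moving the parts of a source and target lying over $\psi^{-1}(2)$ to the opposite side and relabeling $u\leftrightarrow s$ there — the map $\Phi$ carries $\tce$ bijectively onto the edge set of the bigraph $\tch'$ of $\ch'$. The crux is that $\Phi$ respects Condition $0$ and Condition $1$ of Definition \ref{def:abstractedge} (for instance an interior edge stays interior: an incoming end over $\psi^{-1}(2)$ becomes an outgoing end with $u$ relabeled $s$, exactly what ``no stable incoming, no unstable outgoing'' demands). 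The same edgewise check shows that $\Phi$ preserves edge weight — the quantity $s_1+u_2$ is merely regrouped — hence path weight; preserves the interior/boundary and legal/illegal dichotomies; and sends $\psi$-good ends, breaks, and paths to good ones and back (an $s$-end over $\psi^{-1}(2)$ becomes a $u$-end). Since $\Phi$ is an isomorphism of all this structure, it intertwines refinement, the three gluing moves, $\tilde\delta$, $\tilde\partial$, and concatenation (with the caveat that a concatenation at a joint over $\psi^{-1}(2)$ has its two factors exchanged by $\Phi$, which leaves the multiset of products unchanged). In particular $\Phi$ restricts to a bijection $\tcp^1(\psi)\xrightarrow{\ \sim\ }\tcg^1$ for $\tch'$, so $\Phi(\tilde D_\psi)=\tilde D'$, where $\tilde D'$ denotes the element $\tilde D$ of $\tch'$.

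Combining these, $\tilde\delta\tilde D_\psi = \Phi^{-1}\bigl(\tilde\delta\,\Phi(\tilde D_\psi)\bigr) = \Phi^{-1}(\tilde\delta\tilde D')$, which by Theorem \ref{thm:bistructure} applied to the tree-like higraph $\ch'$ equals $\Phi^{-1}(\tilde D'\circ\tilde D') = \tilde D_\psi\circ\tilde D_\psi$. (One could instead argue directly, proving the $\psi$-versions of \eqref{goodweight}, Proposition \ref{prop:goodcases}, and Corollary \ref{cor:onetwoglue} with ``tree-like'' replaced by \eqref{balanced} and ``good'' by ``$\psi$-good'', and then running \eqref{bistructure2} verbatim; the reduction above is cleaner.) I expect the main obstacle to be the verification that $\ch'$ is a higraph, since that is the only step where the global structure of $\ch$, rather than the local data of a single edge, enters; the rest is forced bookkeeping with the $o/u/s$ labels, driven by $\Phi$ being an involution that preserves every defining condition of Definition \ref{def:abstractedge} and the weight of Definition \ref{def:weight}.
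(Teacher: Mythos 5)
Your proposal is correct and follows essentially the same route as the paper: the paper's proof defines the identical higraph $\ch'$ by flipping each end over $\psi^{-1}(2)$ from source to target, notes that $\ch'$ is tree-like by \eqref{balanced}, and asserts that the structure equation on $\tch$ is equivalent to \eqref{bistructure2} on $\tch'$. Your write-up simply makes explicit the verifications (that $\ch'$ is a higraph, and that the lifted involution $\Phi$ preserves weight, legality, goodness, and the gluing/refinement operations) which the paper leaves implicit.
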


\begin{proof}
Given $\ch = (\cv, \ce)$ balanced by $\psi$, we set $\ch' = (\cv, \ce')$ with $\ce$ in bijection with $\ce'$ via
$$\ve = (I,J) \mapsto \ve' = \left(\left(I \cap \phi^{-1}(1)\right) \cup \left(J \cap \phi^{-1}(2)\right), \left(J \cap \phi^{-1}(1)\right) \cup \left(I \cap \phi^{-1}(2)\right)\right).$$
In other words, we flip each end with $\psi = 2$ from source to target or vice versa.
The equation
$$\tilde\delta \tilde D = \sum_{\tr\in \tcp^2_1(\psi)} \tr + \sum_{\tr\in \tcp^2_0(\psi)} 2\tr = \sum_{\tr\in \tcp^2_1(\psi)} \tr = \tilde D \circ \tilde D$$
on $\tch$ is equivalent to equation \eqref{bistructure2} on $\tch'$.  The latter holds since $\ch'$ is tree-like by \eqref{balanced}.
\end{proof}

\subsection{Boundary bigraphs}
\label{sec:bar}

We briefly return to boundary bigraphs.

\begin{definition}
The {\em path DGA} of the boundary bigraph $\bch$ over the field $\mathbb{F}$ consists of a graded algebra $\bca$ equipped with a differential $\bar{\delta} : \bca \to \bca$.  The algebra $\bca$ has underlying vector space $\mathbb{F}\langle\bcp\rangle$, with the subspace $\bca^k$ in grading $k$ spanned by the set $\bcp^k$ of paths of length $k$.  Multiplication is given by concatenation.

The value of $\bar\delta$ on a boundary edge $\bve$ equals the sum of all refinements of $\bve$ to a boundary path of length 2.  In particular, $\bca$ sits inside $\tca$ as a differential algebra.
\end{definition}

The arguments in Section \ref{sec:higraphs} for higraphs carry over essentially without change to boundary bigraphs.  In particular, $\bar\delta$ is indeed a differential and the element 
$$\bar{D} = \sum_{\br \in \bcp^1} \br$$
of $\bca$ satisfies the structure equation
$$\bar\delta \bar D = \bar D \circ \bar D$$
without restriction on the shape of edges.

\subsection{Gluing sequences}
\label{sec:glueseq}

We now take a closer look at gluing sequences in order to prove Lemma \ref{lem:factorial1} and Lemma\ref{lem:even}.  A path in a higraph with two breaks has a $2$-step gluing sequence.  However, there exist paths in a bigraph, such as path $(d)$ in Figure \ref{fig:treebreak}, with two available gluing moves but no $2$-step gluing sequence.  The subtlety is that the breaks in a gluing sequence live on different paths.  At least for bigraphs, it is not immediately clear how to go about reordering them.  The key will be to pull the breaks or markers in a gluing sequence up to breaks on the initial path.  Though the higraph case is straightforward, we consider it in detail as a model for the bigraph case.
 
\subsubsection{Higraphs}
\label{sec:glueseqhi}
Let $\rho_1$ be a path in a higraph $\ch$.   We will define a bijection between $k$-tuples of distinct breaks on $\rho_1$ and $k$-step gluing sequences on $\rho_1$.  We relate these notions by populating a lower triangular matrix $A$ whose $(i,j)$-entry $e^i_j$ is a break on $\rho_j$, with $\rho_j = q(\rho_{j-1},e^{j-1}_{j-1})$.  Here we use the bijection $\pi$ on arrows as defined following \eqref{pi}.

Let $(e^1_1, e^2_1, \dots e^k_1)$ be a $k$-tuple of distinct breaks on a path $\rho_1$, thought of as the first column of $A$.  We populate $A$ one column at a time from left to right as follows.  For each $j$ from $2$ to $k$, we set
$$\rho_j = q(\rho_{j-1},e^{j-1}_{j-1})$$
and
$$e^i_j = \pi(e^i_{j-1})$$
for $j \leq i \leq k$.  The diagonal of $A$ is a $k$-step gluing sequence $(e^1_1, e^2_2, \dots, e^k_k)$ on $\rho_1$.

Let  $(e^1_1, e^2_2, \dots e^k_k)$ be a $k$-step gluing sequence on $\rho_1$, thought of as the diagonal of $A$.  We populate $A$ one column at a time from right to left as follows.
For each $j$ from $k$ down to $2$, we have
$$\rho_j = q(\rho_{j-1}, e^{j-1}_{j-1})$$
and can thus set
$$e^i_{j-1} = \pi^{-1}(e^i_j)$$
for $j \leq i \leq k$.  The first column of $A$ is a $k$-tuple $(e^1_1, e^2_1, \dots e^k_1)$ of distinct breaks on $\rho_1$.

\begin{proposition}
\label{prop:biject1}
Let $\rho_1$ be a path in a higraph.  The map
$$(e^1_1, e^2_1, \dots e^k_1) \mapsto (e^1_1, e^2_2, \dots e^k_k)$$
gives a bijection between $k$-tuples of distinct breaks on $\rho_1$ and $k$-step gluing sequences on $\rho_1$.

Furthermore, if two $k$-tuples differ by a permutation, then the corresponding $k$-step gluing sequences terminate in the same path $\rho_{k+1}$.
\end{proposition}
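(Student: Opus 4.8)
The two procedures already given for populating the matrix $A$ are precisely the two candidate maps, so the plan is to check that each is well-defined and that they are mutually inverse. For the left-to-right procedure, the only thing to verify is that at every stage $j$ the entry $e^j_j$ is genuinely a break on $\rho_j$, and more generally that $e^i_j = \pi(e^i_{j-1})$ is defined for $i \ge j$. This follows by induction: gluing a path along a break contracts the one-arrow subtree carrying that break, and the projection $\pi$ (the bijection on arrows defined following \eqref{pi}) restricts to a bijection from the surviving arrows of $\rho_{j-1}$ onto the arrows of $\rho_j$; since the breaks $e^1_1, \dots, e^k_1$ are distinct, the arrows $e^i_{j-1}$ for $i \ge j$ remain pairwise distinct and in particular distinct from the contracted arrow $e^{j-1}_{j-1}$, so their $\pi$-images survive. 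Hence the diagonal of $A$ is a bona fide $k$-step gluing sequence. For the right-to-left procedure, a gluing sequence supplies the relations $\rho_j = q(\rho_{j-1}, e^{j-1}_{j-1})$, so $\pi$ is invertible on arrows and $e^i_{j-1} = \pi^{-1}(e^i_j)$ is well-defined; injectivity of $\pi^{-1}$ shows the resulting first column consists of distinct breaks.

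That the two maps invert each other is bookkeeping: in both directions the intermediate paths obey the same recursion $\rho_j = q(\rho_{j-1}, e^{j-1}_{j-1})$ and the off-diagonal entries are related to the diagonal by $\pi$ and $\pi^{-1}$, which are mutual inverses, so composing the two procedures returns the matrix $A$ and hence the original data. This gives the bijection.

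For the final assertion I would first record, by induction on $j$ (using \eqref{pi} together with the gluing compatibility discussed in the next paragraph), that $\rho_j$ is obtained from $\rho_1$ by contracting the components of the subforest $F_{j-1} \subseteq T_1$ on the arrows $e^1_1, \dots, e^{j-1}_1$ — this is a forest because any set of edges of a tree is acyclic — and that under the induced contraction $T_1 \to T_j$ each arrow $e^i_1$ with $i \ge j$ maps to $e^i_j$. In particular the terminal path $\rho_{k+1}$ is what one obtains by contracting $\rho_1$ along the subforest spanned by $\{e^1_1, \dots, e^k_1\}$, a description that depends only on the set, not the order, of the breaks. Granting the legitimacy of this description, two $k$-tuples differing by a permutation span the same subforest and hence yield the same $\rho_{k+1}$.

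The one nontrivial point, which I expect to be the main obstacle, is justifying that contracting a forest edge-by-edge is independent of the order. It suffices to handle an adjacent transposition: for distinct breaks $e, e'$ on a path $\rho$, one checks that $q(q(\rho, e), \pi_e(e'))$ and $q(q(\rho, e'), \pi_{e'}(e))$ agree — writing $\pi_e$ for the arrow bijection \eqref{pi} of the gluing at $e$ — both being equal to the gluing of $\rho$ along the subtree on the arrows $e$ and $e'$. When $e$ and $e'$ are non-adjacent in $T$ this is immediate, the two contractions being supported on disjoint portions of $T$; when they share a node it is exactly the statement that the source and target of a length-three subpath are computed the same way whether one first glues its first two edges or its last two, which is the transitivity axiom (reflected in the fact that the glued edge $(s(\rho'), t(\rho'))$ attached to a contracted subtree is a well-defined edge of $\ch$). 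Arbitrary permutations reduce to this case: swapping two consecutive entries of the tuple alters only the two corresponding steps of the diagonal gluing sequence, and by the transposition case this leaves all subsequent paths, and in particular $\rho_{k+1}$, unchanged.
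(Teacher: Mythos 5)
Your proposal is correct and follows essentially the same route as the paper: the bijection is witnessed by the two mutually inverse matrix-filling procedures, and the order-independence of the terminal path reduces to commuting a single adjacent transposition, with the case of two breaks sharing a node handled by transitivity (the glued edge over a contracted subtree depends only on the source and target of the subpath it supports). Your reformulation of $\rho_{k+1}$ as the contraction of $\rho_1$ along the subforest spanned by the set of breaks is a pleasant repackaging, but it rests on exactly the commutation lemma the paper proves directly.
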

\begin{proof}
The map is a bijection as we have also defined its inverse above.

For the latter statement, consider two $k$-tuples $(e^1_1, e^2_1, e^3_1, \dots e^k_1)$ and $(f^1_1, f^2_1, f^3_1, \dots, f^k_1)$ of distinct breaks on $\rho_1$ which differ by a permutation, which we may assume is a transposition of neighbors.  We may reduce further to the case of the transposition of the first two entries by induction on $k$, since if $e^1_1 = f^1_1$ then $(e^2_2, \dots, e^k_2)$ and $(f^2_2, \dots, f^k_2)$ are $(k-1)$-tuples of distinct breaks on $\rho_2 = q(\rho_1, e^1_1) = q(\rho_1, f^1_1)$.  Note that the terminal path of $(e^2_2, \dots, e^k_2)$ is the same as that of $(e^1_1, e^2_1, e^3_1, \dots e^k_1)$ by definition, and similarly for $(f^2_2, \dots, f^k_2)$.

So we suppose $e^1_1 = f^1_2$, $e^1_2 = f^1_1$, and $e^i_1 = f^i_1$ for $3 \leq i \leq k$.  Let $T^e_3$ and $T^f_3$ be the trees underlying the paths $q(q(\rho_1,e^1_1),e^2_2))$ and $q(q(\rho_1,f^1_1),f^2_2))$.  When $e^1_1$ and $e^2_1$ are adjacent, these trees result from contracting $T_1$ along the same two-arrow subtree $T'$, and we therefore have an isomorphism $f: T^e_3 \to T^f_3$ which identifies $\pi_{e^2_2}(\pi_{e^1_1} (v))$ with $\pi_{f^2_2}(\pi_{f^1_1}(v))$ for each $v \in V_1$, and identifies the arrow $e^i_3$ with $f^i_3$ for $3 \leq i \leq k$.  Furthermore, the map $f$ is an isomorphism of paths, since for both paths the edge over the node $v^*$ to which $T'$ contracts has the same source and target as the subpath of $\rho_1$ supported by $T'$.  Finally, if $e^1_1$ and $e^2_1$ are not adjacent, then the isomorphism of paths is clear.

Therefore the paths in the gluing sequences $(e^1_1, e^2_2, e^3_3, \dots, e^k_k)$ and $(f^1_1, f^2_2, f^3_3, \dots, f^k_k)$ are identified from $\rho_3$ to $\rho_{k+1}$.
\end{proof}

\begin{proof}[Proof of Lemma \ref{lem:factorial1}]
Via Proposition \ref{prop:biject1}, the set of $k$-step gluing sequences between any two fixed paths admits a free action of the group of permutations on $k$ elements.
\end{proof}

\subsubsection{Bigraphs}
\label{sec:glueseqbi}

We now take an analogous, though more involved, approach to bigraphs.  We will define a bijection between certain $k$-tuples of markers on $\rho_1$ and $k$-step gluing sequences on $\rho_1$.  Recall the notion of a marker from Definition \ref{def:marker}.

\begin{definition}
A marker $x$ on $\tr_1$ is {\em valid} if  $\tilde q(\tr_1, x)$ is defined.  A $k$-tuple $(x^1_1,\dots,x^k_1)$ of markers on $\tr_1$ is {\em valid} if its coordinates are valid and distinct.
\end{definition}

Suppose $x^1_1$ is a valid marker on $\tr_1$, and set $\tr_2 = \tilde q(\tr_1, x^1_1)$.  Let $T' = (V', E')$ be the subtree of $T_1$ that is contracted under the gluing move, and let $v^*$ be the node in $T_2$ onto which $T'$ contracts.  Recall that $\pi$ denotes both the map $\pi : V_1 \to V_2$ which is injective on $V_1 - V'$ with image $V_2 - \{v^*\}$, and the bijection $\pi : E_1 - E' \to E_2$.  We thus have maps $\pi : X_1 - E' \to X_2$ and $\pi^{-1} : X_2 - \{v^*\} \to X_1$ which are mutual inverses whenever their composition is defined.

We define maps $\tilde\pi : X_1 - E' \to X_2$ and $\tilde\pi^{-1} : X_2 \to X_1$ as follows: \\
\begin{align*}
\tilde\pi(x) &=
\left\{\begin{array}{rll}
\pi(s(x)) &\text{if $x$ is a boundary arrow and } t(x) = x^1_1, & \hspace{25.1mm} \text{(1a)} \\
\pi(t(x)) &\text{if $x$ is a boundary arrow and } s(x) = x^1_1, & \hspace{25.1mm}  \text{(1b)} \\
\pi(x) &\text{otherwise}. & \hspace{25.1mm}  \text{(1c)}
\end{array}\right. \\
& \\
\tilde\pi^{-1}(y) &=
\left\{\begin{array}{rll}
(\pi^{-1}(y),x^1_1) &\text{if $y$ is a boundary node and $(y,\pi(x^1_1)) \in E_2$}, &  \hspace{10mm} \text{(2a)}\\
(x^1_1,\pi^{-1}(y)) &\text{if $y$ is a boundary node and $(\pi(x^1_1), y) \in E_2$}, & \hspace{10mm} \text{(2b)}\\
x^1_1 &\text{if } y = \pi(x^1_1), & \hspace{10mm} \text{(2c)} \\
t(x^1_1) &\text{if } y = \pi(t(x^1_1)) \text{ and $x^1_1$ is unstable}, & \hspace{10mm} \text{(2d)} \\
s(x^1_1) &\text{if } y = \pi(s(x^1_1)) \text{ and $x^1_1$ is stable or interior}, & \hspace{10mm} \text{(2e)} \\
\pi^{-1}(y) &\text{otherwise}. & \hspace{10mm} \text{(2f)}
\end{array}\right.
\end{align*}

Note that in (1a) and (1b), $x^1_1$ is a marker for Move Ib.  In (2a), (2b), and (2c), $x^1_1$ is a marker for Move Ib.  In (2d), $x^1_1$ is a marker for Move II, while in (2e), $x^1_1$ is a marker for Move II or Move Ia.  In (2c), (2d), and (2e), $y$ coincides with the node $v^*$.

The following facts will be useful in the proof of Lemma \ref{lem:valid} below: \\
{\em Fact 1:} The maps $\pi|_{X_1 - V' - E'}$ and $\pi^{-1}|_{X_2 - \{v^*\}}$ are mutual inverses and satisfy $\tr_2 \pi = \tr_1$ and \\
\indent \hspace{8mm} $\tr_2 = \tr_1 \pi^{-1}$. In particular, these maps preserve abstract vertex and edge type. \\
{\em Fact 2:} If $x^1_1$ is a valid marker for Move Ib, then $x^1_1$ is boundary while $\pi(x^1_1)$ is interior and \\
\indent \hspace{8mm} thus not valid. \\

\begin{lemma}
\label{lem:valid}
The maps $\tilde\pi : X_1 - E' \to X_2$ and $\tilde\pi^{-1} : X_2 \to X_1$ are well-defined.  Furthermore:
(i)  If $y \in X_2$, then $\tilde\pi(\tilde\pi^{-1}(y)) = y$. If $y$ is valid, then $\tilde\pi^{-1}(y)$ is distinct from $x^1_1$ and valid. \\
(ii) If $x \in X_1$ is distinct from $x^1_1$ and valid, then $\tilde\pi(x)$ is defined and $\tilde\pi^{-1}(\tilde\pi(x)) = x$.
\end{lemma}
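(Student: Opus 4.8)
\emph{Proof strategy.} The plan is a case analysis, organized first according to which of the three gluing moves (Ia, Ib, or II) the marker $x^1_1$ realizes, and then, within each, according to which clause of the definition of $\tilde\pi$ (resp.\ $\tilde\pi^{-1}$) is in force. Facts 1 and 2 supply most of the bookkeeping: away from the contracted subtree $T' = (V', E')$ the maps $\pi$, $\pi^{-1}$ are inverse type-preserving bijections intertwining $\tr_1$ and $\tr_2$, and for Move Ib the marker $x^1_1$ is a boundary node while $\pi(x^1_1) = v^*$ is an interior node, hence an invalid marker. So the only substantive issue is the behaviour at the new node $v^*$ and along the arrows of $T_2$ incident to it.

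First I would dispose of well-definedness. Once the move type is fixed, the listed clauses become mutually exclusive and exhaustive: (1a), (1b) and (2a)--(2c) can occur only for Move Ib --- precisely the case in which $\pi(x^1_1)$ is a node of $T_2$ --- while (2d), (2e) occur only for Moves II and Ia respectively, and (1c) (resp.\ (2f)) is the default on the complement. That the outputs lie in the stated codomains comes down to the shape of $T'$. For Moves Ia and II one has $E' = \{x^1_1\}$, so $X_1 - E' = X_1 - \{x^1_1\}$ and there is nothing to check; for Move Ib, $E'$ is the set of arrows of $T_1$ with both endpoints in $\text{Star}^o(v)$, so a boundary arrow incident to $v = x^1_1$ that lies in $X_1 - E'$ has its far endpoint outside $V'$, whence the nodes $\pi(s(x))$, $\pi(t(x))$ of (1a)--(1b) are genuine nodes of $T_2$. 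The dual point --- that the pairs produced by (2a)--(2b) are genuine arrows of $T_1$ --- is the one place the argument is not purely formal: to settle it one must identify, for each $T_2$-node $y$ adjacent to $v^*$, the arrow of $T_1$ that $(y,v^*)$ descends from, for which I would invoke that $\text{Star}^o(v)$ has weight $2$ and hence, by Corollary \ref{cor:legalweight2}, the explicit form Ib.

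The two composition identities are then a matter of following the clauses through. For (i) one verifies $\tilde\pi^{-1}(y) \in X_1 - E'$ and pairs the clauses --- (2a)$\to$(1a), (2b)$\to$(1b), and (2c)--(2f)$\to$(1c) --- each computation collapsing to $\pi\circ\pi^{-1} = \mathrm{id}$, to $\pi$ unwinding the arrow attached to $x^1_1$, or to $\pi$ carrying a node of $V'$ to $v^*$. For (ii), with $x \neq x^1_1$ valid, one first observes $x \notin E'$ --- automatic for Moves Ia/II, and for Move Ib because an arrow of $E'$ joins the boundary node $v$ to an interior node and so is a valid marker for no move --- whence $\tilde\pi(x)$ is defined, and the same pairings read in reverse give $\tilde\pi^{-1}(\tilde\pi(x)) = x$. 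Distinctness of $\tilde\pi^{-1}(y)$ from $x^1_1$ follows once one notes that (2c) is the only clause outputting $x^1_1$ and that it fires only at $y = \pi(x^1_1)$, which for Moves Ia/II is handled instead by (2d)/(2e) and for Move Ib is the invalid node $v^*$ (Fact 2) --- hence excluded when $y$ is valid.

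The remaining assertion, that $\tilde\pi^{-1}(y)$ is valid whenever $y$ is, is where I expect the real work. One must check that the conditions defining a valid marker --- for Ia, that the arrow joins two interior nodes; for Ib, the two balance conditions at a boundary node; for II, that the arrow joins two boundary nodes --- transport across the gluing. Away from $v^*$ this is again Fact 1; for a marker meeting $v^*$ one substitutes the explicit formula for $\tau(v^*)$ furnished by the gluing move and verifies the condition by direct computation, move by move. This validity-preservation step, together with the identification of the $v^*$-incident arrows described above, is the main obstacle; everything else reduces, after unwinding the definitions, to $\pi \circ \pi^{-1} = \mathrm{id}$ and the two Facts.
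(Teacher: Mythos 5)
Your proposal follows essentially the same route as the paper's own proof: a clause-by-clause analysis of $\tilde\pi$ and $\tilde\pi^{-1}$ driven by Facts 1 and 2, with the pairings (1a)$\leftrightarrow$(2a), (1b)$\leftrightarrow$(2b), (1c)$\leftrightarrow$(2c)--(2f), and with the validity-preservation step at $v^*$ handled by substituting the explicit gluing formula for $\tau(v^*)$ move by move, exactly as the paper does in its cases (2a)--(2f). One small correction to your organizational overview: clause (2e) fires for Move II (when $x^1_1$ is a stable arrow) as well as for Move Ia (when it is interior), not for Move Ia alone --- this does not affect the argument, since (2d) and (2e) together still cover $y=v^*$ whenever $x^1_1$ is an arrow.
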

\begin{proof}
The cases within each definition are mutually exclusive.  The map $\tilde\pi$ is well-defined because its domain coincides with that of $\pi$.  For the map $\tilde\pi^{-1}$, there are two issues to check.  First, Fact 1 implies that $(\pi^{-1}(y), x^1_1)$ is an arrow of $T_1$ in (2a) and that $(x_1^1, \pi^{-1}(y))$ is an arrow of $T_1$ in (2b).  Second, if $y = v^*$, then $\tilde\pi^{-1}(y)$ is defined by (2c) if $x^1_1$ is a node and by (2d) or (2e) when $x^1_1$ is an arrow.

For the first claim of (i), note that the image of $\tilde\pi^{-1}$ is disjoint from $E'$, so that $\tilde\pi(\tilde\pi^{-1}(y))$ is well-defined.  Cases (1a) and (2a) define mutual inverses, as do (1b) and (2b).  Case (1c) inverts (2c), (2d), (2e), and (2f) by definition (thought not necessarily the other way around).

Now suppose that $y \in X_2$ is valid.  If $x^1_1$ is a node, then $v^* = \pi(x_1^1)$ is an interior node and thus not valid.  Nor is $x^1_1$ in the image of $\pi^{-1}$.  So $\tilde\pi^{-1}(y)$ is distinct from $x^1_1$.

To show that $\tilde\pi^{-1}(y)$ is valid, we consider each case of the definition of $\tilde\pi^{-1}$ in turn.

Case (2a): $y$ is a boundary node with $(y, \pi(x_1^1)) \in E_2$.  Then $\tilde\pi^{-1}(y)$ is the arrow $(\pi^{-1}(y), x^1_1)$ which runs between two boundary nodes by Facts 1 and 2 and is thus valid.

Case (2b): This case is similar (2a).

Case (2c): This case does not occur when $y$ is valid by Fact 2.

Case (2d): $y = v^*$ and $x^1_1$ is an unstable arrow, with $\tilde\pi^{-1}(y) = t(x^1_1)$.  Since $x^1_1$ is a valid boundary arrow, $t(x^1_1)$ is a boundary node.  The edge $\tr_2(y)$ results from applying Move II to the subpath $\tr_1(s(x^1_1))\tr_1(t(x^1_1))$.   Furthermore, by Fact 2, the interior neighborhoods of $y$ and $t(x^1_1)$ are identified.  Since $y$ is valid and the break $\tr_1(x^1_1)$ is unstable, we conclude that $t(x^1_1)$ is valid (while $s(x^1_1)$ is not). 

Case (2e): This case is similar to Case 4 when $x^1_1$ is stable.  Note that if $y = v^*$ is valid, then $x^1_1$ cannot be an interior arrow.

Case (2f): If $y$ is a valid arrow then its ends are both interior or both boundary, and it suffices to show that the same is true of $\tilde\pi^{-1}(y) = \pi^{-1}(y)$.  This follows from Fact 1 if $y$ is not bounded by the interior node $v^*$.  On the other hand, if $y$ starts or ends at $v^*$, then both ends of $y$ are interior nodes, and thus $y$ is an interior arrow.  Hence $\pi^{-1}(y)$ is also an interior arrow by Fact 1, and we conclude that its ends are both interior as well.  If $y$ is a valid node and does not fall into (2a) through (2e), then Star$^o(y)$ does not include $v^*$ and thus Star$^o(\pi^{-1}(y))$ is legal as well by Fact 1.  Therefore $\pi^{-1}(y)$ is also a valid node.  This completes (i).

For (ii), we suppose $x \in X_1$ is distinct from $x^1_1$ and valid.  If $x^1_1$ is an arrow, then $E' = \{x^1_1\}$ so $x \notin E'$.  If $x^1_1$ is a node, then $E'$ consists of arrows between the boundary node $x^1_1$ and interior nodes.  These arrows are not valid, so $x \notin E'$.  Therefore $\tilde\pi(x)$ is defined.

Finally, we show that $\tilde\pi^{-1}(\tilde\pi(x)) = x$. As in (i), cases (1a) and (2a) are mutual inverses, as are (1b) and (2b).  So from now on we assume $x$ falls into (1c), with $\tilde\pi(x) = \pi(x)$.  If $\pi(x) = \pi(x^1_1)$ as in (2c), then $x = x^1_1$ since the nodes adjacent to $x^1_1$ are not valid.  If $\pi(x)$ falls into (2d) or (2e), then $x^1_1$ is a boundary arrow bounded by the valid node $x$ and $\pi(x) = \pi(s(x^1_1)) = \pi(t(x^1_1))$.  If $x^1_1$ is unstable (resp., stable) then we must have $x = t(x^1_1)$ (resp., $ x = s(x^1_1)$) in order for Star$^o(x)$ to be legal.  This is consistent with the definition of $\tilde\pi^{-1}$ in (1d) and (1e).  Finally, if $\pi(x)$ falls into (1f), then $\tilde\pi^{-1}(\tilde\pi(x)) = \pi^{-1}(\pi(x)) = x$ by Fact 1.
\end{proof}

Paralleling our strategy for higraph, we now populate a lower triangular matrix whose $(i,j)$-entry $x^i_j$ is a valid marker on $\tr_j$, with $\tr_j = \tilde q(\tr_{j-1},x^{j-1}_{j-1})$.

Let $(x^1_1, x^2_1, \dots x^k_1)$ be a $k$-tuple of markers on a path $\tr_1$.  For each $j$ from $2$ to $k$, we set
$$\tr_j = \tilde q(\tr_{j-1},x^{j-1}_{j-1})$$
and
$$x^i_j = \tilde\pi(x^i_{j-1})$$
for $j \leq i \leq k$.  This inductive definition may break down if some $x^i_j$ is not a valid marker.
\begin{definition}
A $k$-tuple $(x^1_1, x^2_1, \dots,x^k_1)$ of valid markers on $\tr_1$ is {\em completely valid} if $x^i_j$ is defined and valid for all $1 \leq j \leq i \leq k$.
\end{definition}
Thus, a completely valid $k$-tuple $(x^1_1, x^2_1, \dots,x^k_1)$ on $\tr_1$ determines a $k$-step gluing sequence $(x^1_1, x^2_2, \dots, x^k_k)$ on $\tr_1$.

Let  $(x^1_1, x^2_2, \dots x^k_k)$ be a $k$-step gluing sequence on $\tr_1$. For each $j$ from $k$ down to $2$, we have
$$\tr_j = \tilde q(\tr_{j-1}, x^{j-1}_{j-1})$$
and can thus set
$$x^i_{j-1} = \tilde\pi^{-1}(x^i_j)$$
for $j \leq i \leq k$.  This determines a completely valid $k$-tuple $(x^1_1, x^2_1, \dots x^k_1)$ of markers on $\tr_1$ by Lemma \ref{lem:valid}, which also implies that the above maps are inverses.  We conclude:

\begin{proposition}
\label{prop:biject2}
Let $\tr_1$ be a path in a bigraph.  The map $$(x^1_1, \dots, x^k_1) \mapsto (x^1_1, \dots, x^k_k)$$ gives a bijection between completely valid $k$-tuples of markers on $\tr_1$ and $k$-step gluing sequences on $\tr_1$.
\end{proposition}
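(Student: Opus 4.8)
The forward map (sending a completely valid $k$-tuple to the diagonal of the matrix it generates under $\tilde\pi$) and the backward map (sending a $k$-step gluing sequence to the first column of the matrix it generates under $\tilde\pi^{-1}$) have already been written down above, so the plan is simply to verify, using only Lemma \ref{lem:valid}, that each lands in the claimed target set and that the two are mutually inverse.

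First I would check that the forward map produces a genuine $k$-step gluing sequence: this is immediate, since in a completely valid tuple every diagonal entry $x^j_j$ is by definition a valid marker on $\tr_j$, and $\tr_{j+1}$ is defined to be $\tilde q(\tr_j,x^j_j)$ --- exactly the data required of a gluing sequence. Next I would check that the backward map produces a completely valid tuple. Running down each column of the matrix and invoking Lemma \ref{lem:valid}(i) at each step shows that every entry $x^i_j$ is defined and valid; in particular the first column consists of valid markers. The one point that needs a real argument is that the first-column coordinates are distinct, which I would prove by induction on $k$. Suppose $x^a_1 = x^b_1$ with $a < b$. If $a = 1$, this contradicts the clause of Lemma \ref{lem:valid}(i) stating that $\tilde\pi^{-1}(x^b_2)$ is distinct from $x^1_1$. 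If $a > 1$, then $x^a_1$ and $x^b_1$ are valid and distinct from $x^1_1$ by Lemma \ref{lem:valid}(i), so Lemma \ref{lem:valid}(ii) applies and, because $\tilde\pi^{-1}\tilde\pi$ is the identity on such markers, $\tilde\pi$ is injective on them; hence $x^a_2 = x^b_2$. But the submatrix on rows and columns $\geq 2$ is exactly the matrix attached to the $(k-1)$-step gluing sequence $(x^2_2,\dots,x^k_k)$ on $\tr_2$, so the inductive hypothesis yields a contradiction.

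Finally I would verify that the two maps invert one another. For backward-then-forward, the identity $\tilde\pi(\tilde\pi^{-1}(y))=y$ of Lemma \ref{lem:valid}(i) recovers the original gluing sequence one column at a time. For forward-then-backward, a short induction on the column index shows that within each column of the matrix generated by a completely valid tuple the entries are pairwise distinct (using again that $\tilde\pi$ is injective on the valid markers distinct from the relevant diagonal entry), after which Lemma \ref{lem:valid}(ii) gives $\tilde\pi^{-1}(\tilde\pi(x^i_{j-1}))=x^i_{j-1}$ and so recovers the first column. I expect no obstacle in this assembly: all the genuine content has been front-loaded into the construction of $\tilde\pi$ and $\tilde\pi^{-1}$ and into Lemma \ref{lem:valid} --- whose proof is the delicate matching of the six cases defining $\tilde\pi^{-1}$ against the three gluing moves --- so the present proposition is a formal consequence of those.
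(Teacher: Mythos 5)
Your proposal is correct and follows essentially the same route as the paper, which likewise treats the proposition as a formal consequence of the matrix construction together with Lemma \ref{lem:valid}. The only difference is that you spell out the check that the first-column markers produced by $\tilde\pi^{-1}$ are pairwise distinct, a point the paper leaves implicit in the phrase ``by Lemma \ref{lem:valid}, which also implies that the above maps are inverses.''
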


In a bigraph, permutations of a completely valid $k$-tuple need not be completely valid.  Indeed, in Figure \ref{fig:eightPaths} each set of 3 valid makers on $\tr_1$ is completely valid for only two out of six possible orderings, and there are precisely eight $3$-step gluing sequences. However, Lemma \ref{lem:factorial1} does extend from higraphs to bigraphs when $k=2$.

\begin{proposition}
\label{prop:transpose}  Let $(x^1_1, x^2_1)$ be a completely valid $2$-tuple for a path in a bigraph. \\
(i) The transposed $2$-tuple $(x^2_1, x^1_1)$ is completely valid. \\
(ii) The $2$-step gluing sequences for $(x^1_1, x^2_1)$ and $(x^2_1, x^1_1)$ terminate in the same path. \\
(iii) For $k \geq 2$, the number of $k$-step gluing sequences between any two fixed paths is even.
\end{proposition}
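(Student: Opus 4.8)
The plan is to derive (iii) from (i) and (ii) together with an induction on $k$, and to prove (i) and (ii) themselves by a case analysis of how the two gluing moves interact inside the tree $T_1$ underlying $\tr_1$ -- the generic case being that they simply commute.

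For (iii) I first treat $k=2$. Fix paths $\tr_1$ and $\tr'$. By Proposition \ref{prop:biject2} the $2$-step gluing sequences from $\tr_1$ to $\tr'$ correspond bijectively to the completely valid $2$-tuples $(x^1_1,x^2_1)$ of markers on $\tr_1$ whose induced gluing sequence terminates in $\tr'$. By (i) the tuple $(x^2_1,x^1_1)$ is again completely valid, and by (ii) its gluing sequence also terminates in $\tr'$, so $(x^1_1,x^2_1)\mapsto(x^2_1,x^1_1)$ is a self-map of this finite set; it is obviously an involution, and it is fixed-point-free because the coordinates of a valid tuple are distinct. Hence this set, and with it the number of $2$-step gluing sequences from $\tr_1$ to $\tr'$, has even cardinality. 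For $k\ge 3$ I would induct on $k$: a $k$-step gluing sequence from $\tr_1$ to $\tr'$ is the same data as a valid marker $x_1$ on $\tr_1$, producing $\tr_2=\tilde q(\tr_1,x_1)$, followed by a $(k-1)$-step gluing sequence from $\tr_2$ to $\tr'$. Writing $N_j(\alpha,\beta)$ for the number of $j$-step gluing sequences from $\alpha$ to $\beta$, this gives
\[
N_k(\tr_1,\tr')=\sum_{\tr_2}N_1(\tr_1,\tr_2)\cdot N_{k-1}(\tr_2,\tr').
\]
Since $k-1\ge 2$, each $N_{k-1}(\tr_2,\tr')$ is even by the inductive hypothesis, so $N_k(\tr_1,\tr')$ is even. (This parallels the deduction of Lemma \ref{lem:factorial1} from Proposition \ref{prop:biject1}, with a fixed-point-free involution in place of the free $S_k$-action, which no longer exists for bigraphs; compare the eight $3$-step gluing sequences of Figure \ref{fig:eightPaths}.)

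The heart of the matter is (i) and (ii). To each valid marker $x$ on $\tr_1$ I attach the subtree $C(x)\subseteq T_1$ that its move contracts: an arrow together with its two endpoint nodes for Moves Ia and II, and $\text{Star}^o(x)$ for Move Ib. If $C(x^1_1)$ and $C(x^2_1)$ share no node, the two moves act on disjoint parts of $T_1$ and commute -- in particular neither changes the type of the other marker, so both orderings are valid and reach the same path, giving (i) and (ii) at once. Otherwise I would enumerate the possible overlaps, which are few because of the structural constraints: Move Ia (resp.\ Move II) needs an arrow between two interior (resp.\ two boundary) nodes, and Move Ib needs a boundary node at which every incoming arrow is absorbed by an interior or stable end and every outgoing arrow by an interior or unstable end. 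Since $T_1$ is a tree, a short argument shows that, up to swapping $x^1_1\leftrightarrow x^2_1$, the only overlapping configurations are: two adjacent interior arrows (three interior nodes in a row, exactly the higraph situation); two adjacent boundary arrows (three boundary nodes with compatible break types); an interior arrow meeting $\text{Star}^o$ of an Ib-node at an interior leaf; a boundary arrow incident to the central node of an Ib-star; and two Ib-stars sharing a common interior node. For each configuration I would exhibit the connected subtree of $T_1$ that the composite $\tr_1\to\tr_2\to\tr_3$ contracts, together with the abstract edge it carries at the contracted node; although this recipe is case-dependent, in every case it is manifestly symmetric in $x^1_1$ and $x^2_1$, which gives (ii), while the validity of the second move in either order reduces to the very same numerical condition on that node, which gives (i). Throughout I would invoke Lemma \ref{lem:valid}, which already pins down the transport maps $\tilde\pi$ and $\tilde\pi^{-1}$ -- including the subtle point that performing one move can change the \emph{type} of the other marker, an arrow becoming a node or conversely, via the non-trivial clauses of their definitions.

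The main obstacle is exactly these type-changing configurations inside the proofs of (i) and (ii): when, for instance, Move Ib applied to a boundary node turns an incident boundary arrow into a node, the ``second move'' in one ordering is not even the same kind of move as in the other, so one must chase both the move-validity condition and the resulting abstract-edge label through both orderings and check that they agree. Each individual verification is routine, but there are several and the type changes make them unintuitive; the difficulty is organizational rather than conceptual. The crucial preliminary step is to find a sufficiently uniform description of ``the subtree contracted by a two-step gluing sequence and the label it carries'': once that is in place, Lemma \ref{lem:valid} supplies the mechanics and the symmetry claims (i) and (ii) follow by inspection.
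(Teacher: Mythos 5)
Your proposal is correct and follows essentially the same route as the paper: parts (i) and (ii) via the observation that markers with disjoint contracted subtrees commute, followed by an enumeration of the five overlap configurations (your list matches the paper's columns $(a)$--$(e)$ of Figure \ref{fig:glueTwiceB}, including the one delicate case of a boundary arrow incident to an Ib-node, which is exactly the case the paper isolates and settles in Figures \ref{fig:glueTwice} and \ref{fig:glueTwiceNope}), and part (iii) from the resulting fixed-point-free involution. The only cosmetic difference is in (iii) for $k\ge 3$: you split off the first step and sum $N_1\cdot N_{k-1}$ with an induction, whereas the paper splits off the last two steps and transposes them; both reductions to the $k=2$ case are valid.
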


\begin{proof}
For (i), Let $(x^1_1, x^2_1)$ be a valid $2$-tuple of markers on $\tr_1$ and let $(y^1_1, y^2_1)$ be its transposition, so $y^1_1 = x^1_2$, $y^1_2 = x^1_1$.
By definition, we have $x^2_2 = \tilde\pi_{x^1_1}(x^2_1)$ and $y^2_2 = \tilde\pi_{y^1_1}(y^2_1)$, where the subscript on $\tilde\pi$ specifies the marker at which we glued.  We claim that the marker $x^2_2$ on $\tilde q(\tr_1,x^1_1)$ is valid if and only if the marker $y^2_2$ on $\tilde q(\tr_1,y^1_1)$ is valid.

By Fact 1, the claim is clear if $x_1^1$ and $x^2_1$ do not interact; that is, if the subtrees of $T_1$ that are contracted by gluing at $x_1^1$ and $x^2_1$ are disjoint.  There are five ways in which two valid markers can interact.  These are pictured in columns $(a)$ through $(e)$ of Figure \ref{fig:glueTwiceB}.    One can check that $x_2^2$ and $y^2_2$ are always both valid in all cases but $(e)$.  In column $(e)$, $x^1_1$ is a boundary node, $x^2_1$ is a boundary arrow, and $t(x^2_1) = x^1_1$ or $s(x^2_1) = x^1_1$.  In this case, 
Figures \ref{fig:glueTwice} and \ref{fig:glueTwiceNope} show that $x^2_2$ is valid if and only if $y^2_2$ is valid.

For (ii), let $T^x_3$ and $T^y_3$ be the trees underlying the paths $\tilde q(\tilde q(\tr_1,x^1_1),x^2_2))$ and $\tilde q(\tilde q(\tr_1,y^1_1),y^2_2))$.  In cases $(a)$ through $(e)$ of Figure \ref{fig:glueTwiceB}, one can check that the trees result from contracting the same subtree $T'$ of the tree $T_1$, and we therefore have an isomorphism $f: T^x_3 \to T^y_3$ which identifies $\pi_{x^2_2}(\pi_{x^1_1} (v))$ with $\pi_{y^2_2}(\pi_{y^1_1}(v))$ for each $v \in V_1$.  The most interesting case $(e)$ is verified in Figure \ref{fig:glueTwice}.  Furthermore, in each case, the map $f$ is an isomorphism of paths, since for both paths the edge over the node $v^*$ to which $T'$ contracts has the same source and target as the subpath of $\rho_1$ supported by $T'$.  Finally, if $x^1_1$ and $x^2_2$ do not interact, then the isomorphism of paths is clear.

For (iii), note that we have shown that the set of $2$-step gluing sequences between any two fixed paths admits a fixed-point free involution, given by transposition on the corresponding $2$-tuple of valid markers.  For $k > 2$, the set of $k$-step gluing sequences admits a fixed-point free involution as well, since each $k$-step gluing sequence factors uniquely as a $(k-2)$-step gluing sequence followed by a $2$-step gluing sequence.  We conclude that these sets contain an even number of elements.
\end{proof}

\begin{figure}[hp]
\centering
\includegraphics[width=156mm]{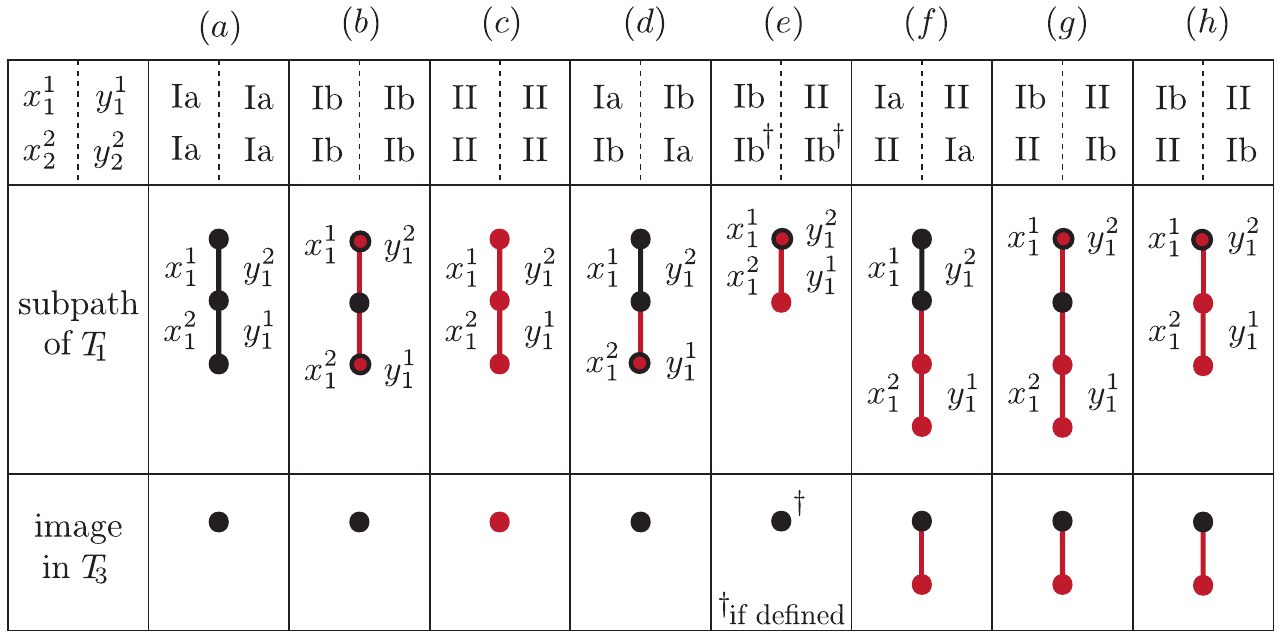}
\caption{To a $2$-tuple $(x^1_1, x^2_1)$ of valid markers, we may associate the minimal undirected subpath of $T_1$ which includes both markers.  Several such subpaths are depicted in the second row above.  There are exactly five (undirected) subpaths that may arise when the contracting trees of $x^1_1$ and $x^2_1$ overlap.  These are depicted in columns $(a)$ through $(e)$.  In the first row, we depict the gluing type of the markers $x^1_1$, $x^2_2$, $y_1^1$, and $y^2_2$, where $(y^1_1, y^2_1)$ is the transposition of $(x^1_1, x^2_1)$.  In all columns except $(e)$, the markers $x^2_2$ and $y^2_2$ are always valid.  In column $(e)$, $x^2_2$ is valid if and only if $y^2_2$ is valid, as shown in Figures \ref{fig:glueTwice} and \ref{fig:glueTwiceNope}.  If $x^2_2$ and $y^2_2$ are valid, then $\tr_3$ is defined and independent of gluing order, as may be easily verified in each case above.  In the third row, we show the image in $T_3$ of the above subpath of $T_1$.}
\label{fig:glueTwiceB}
\end{figure}

\begin{figure}[p]
\centering
\includegraphics[width=156mm]{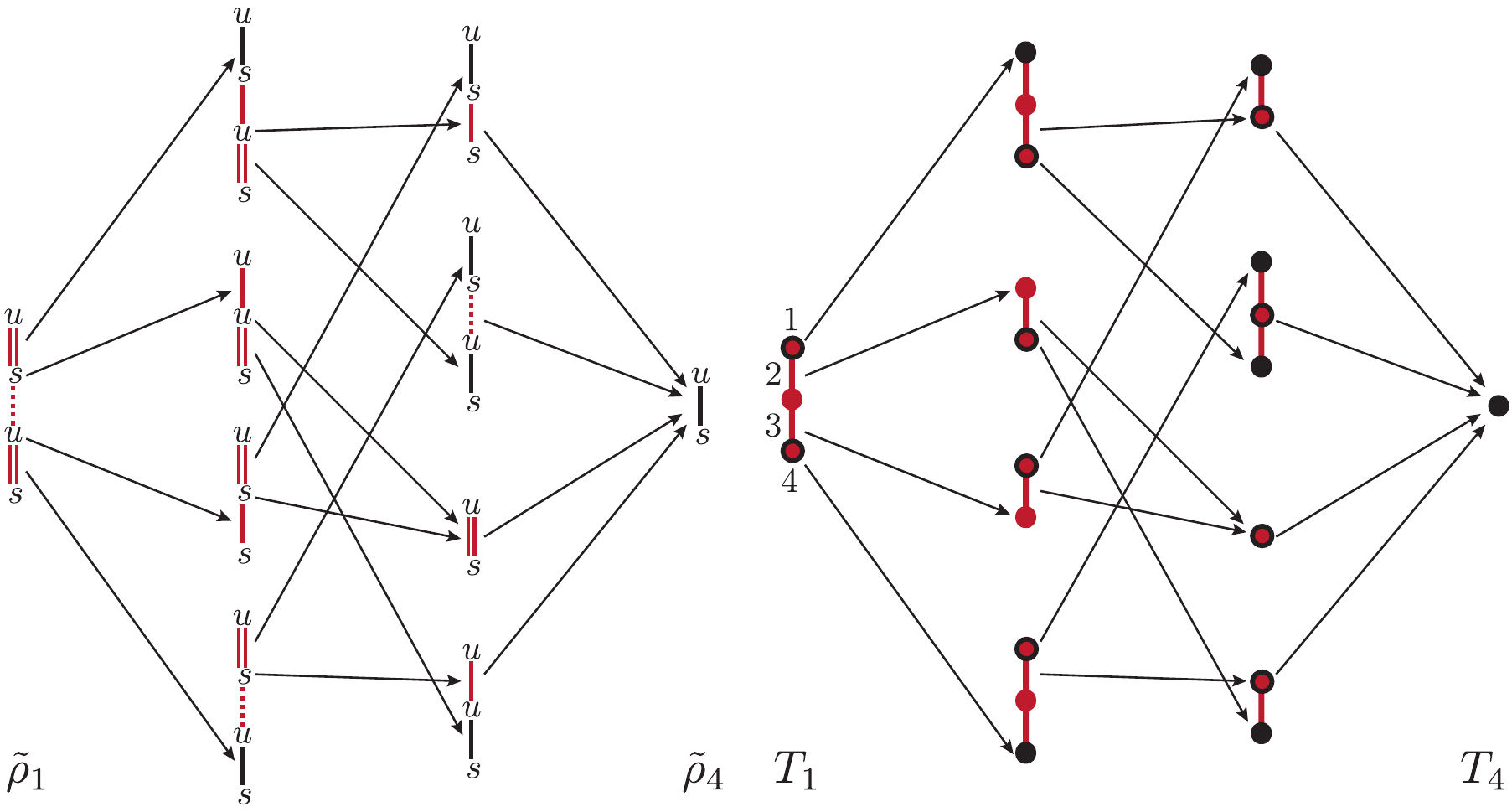}
\caption{The legal path $\tilde\rho_1$ at left has weight $4$, so all $3$-step gluing sequences terminate in the path $\tilde\rho_4$ consisting of a single interior edge.  These eight sequences are shown above, with each arrow positioned to pick out the corresponding gluing move.  The four valid markers of $\tilde\rho_1$ are numbered on the underlying tree $T_1$, with $1$ and $4$ marking Move Ib and $2$ and $3$ marking Move II.  Note that the $2$-tuples $(1,2)$, $(2,1)$, $(3,4)$ and $(4,3)$ fall into the undefined case of column $(e)$ of Figure \ref{fig:glueTwiceB}, as illustrated in Figure \ref{fig:glueTwiceNope}.  The eight completely valid $3$-tuples on $\tilde\rho_1$ are $(1,3,4), (1,4,3), (2,3,4), (2,4,3), (3,1,2), (3,2,1), (4,1,2),$ and $(4,2,1)$.}
\label{fig:eightPaths}
\end{figure}

\begin{figure}[p]
\centering
\includegraphics[width=156mm]{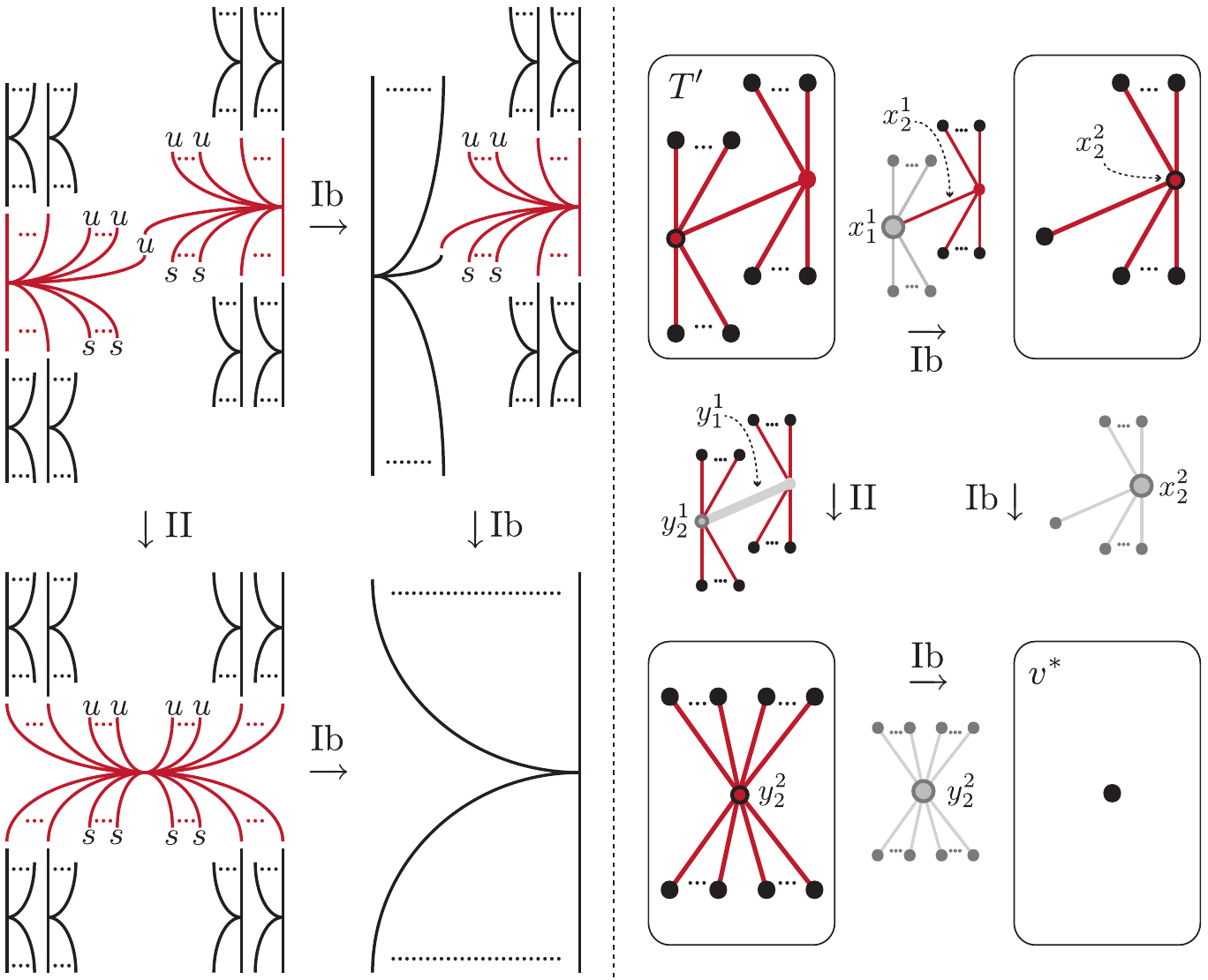}
\caption{In Figures \ref{fig:glueTwice} and  \ref{fig:glueTwiceNope}, we examine the case of column $(e)$ of Figure \ref{fig:glueTwiceB} in detail.   The above diagram illustrates the case when both gluing sequences $(x^1_1, x^2_2)$ and $(y_1^1, y^2_2)$ are defined.  The subtree contracted at each step in shown in gray at right.  Overall, the two sequences contract the same subtree $T'$ of $T_1$ to a single node $v^*$, and therefore terminate in the same path $\tr_3$.  The diagram further illustrates that $x^2_2$ is valid if and only if $y^2_2$ is valid (compare with Figure \ref{fig:glueTwiceNope}).}
\label{fig:glueTwice}
\end{figure}

\begin{figure}[p]
\centering
\includegraphics[width=156mm]{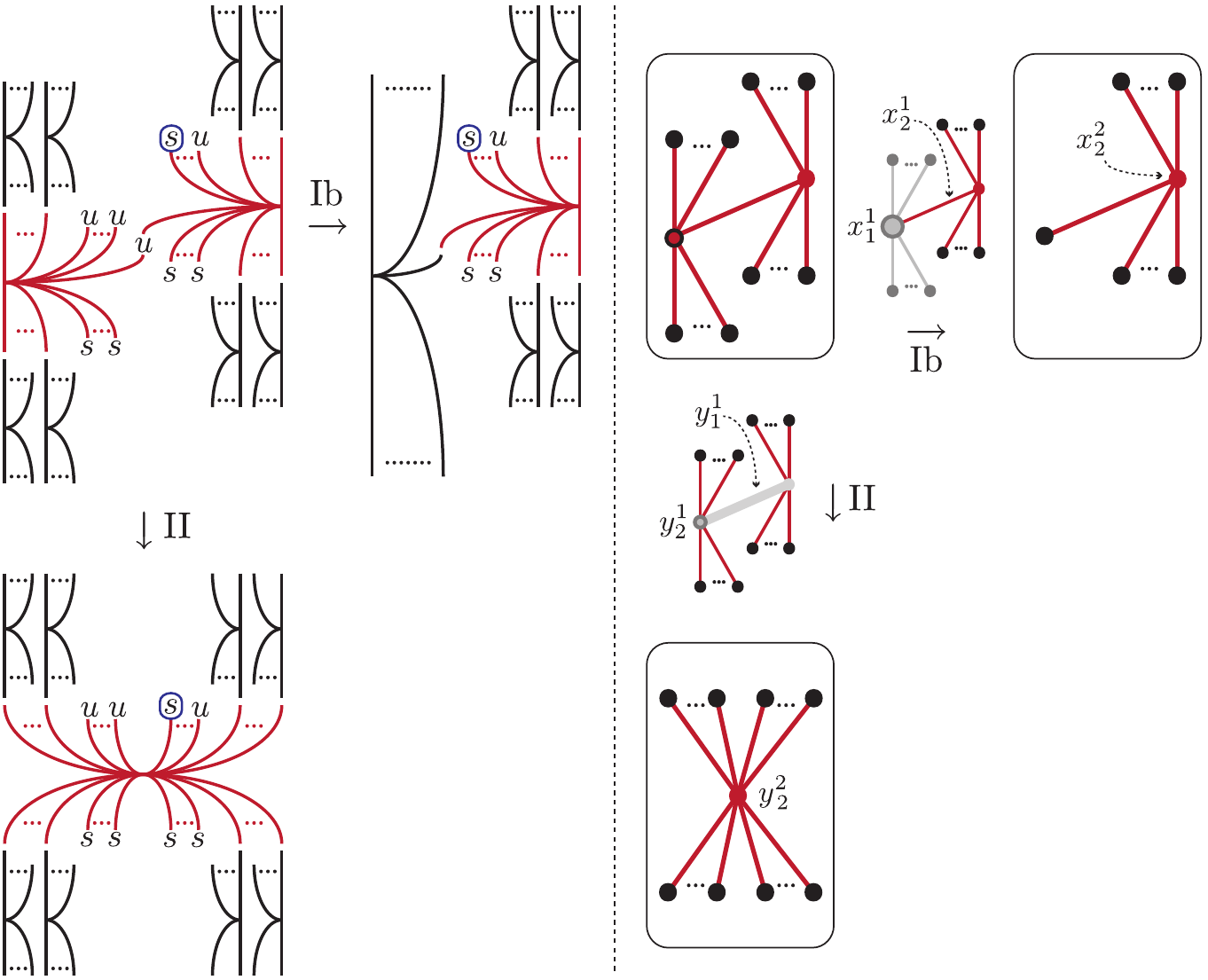}
\caption{In the above diagram, both $x^2_2$ and $y^2_2$ are invalid, so that neither $2$-step gluing sequence is defined.  The difference between Figures \ref{fig:glueTwice} and \ref{fig:glueTwiceNope} amounts to switching the single (encircled) vertex from unstable to stable.}
\label{fig:glueTwiceNope}
\end{figure}

\newpage

\section*{Appendix: Morse homology with boundary}

\subsection*{A. \ \ \, Morse homology on a closed manifold}

We first recall the construction of Morse homology on a closed, smooth manifold $M$.  Fix a Riemannian metric and Morse function $f: M \to \mathbb{R}$.  The gradient flow $\phi_t: M \to M$ with respect to $-\nabla f$ determines an unstable manifold
$$S_a = \{x \in M \, : \, \lim_{t \to -\infty} \phi_t(x) = a\}$$
and stable manifold
$$U_a = \{x \in M \, : \, \lim_{t \to \infty} \phi_t(x) = a\}$$
for each critical point $a$.  The Morse index $\lambda_a$ of $a$ is the dimension of $U_a$.  We insist that $f$ satisfy the {\em Morse-Smale} condition:
\begin{itemize}
\item $U_a$ and $S_b$ intersect transversely in $M$ for all critical points $a$ and $b$.
\end{itemize}
The manifold $U_a \cap S_b$ has dimension $\lambda_a - \lambda_b$ and is canonically identified with the moduli space of gradient trajectories from $a$ to $b$:
$$\mathcal{M}(a,b) = \{ \gamma : \mathbb{R} \to M \, : \, \frac{d\gamma}{dt} = -\nabla f \circ\gamma, \ \lim_{t \to -\infty} \gamma(t) = a, \ \lim_{t \to +\infty} \gamma(t) = b\}.$$
The space $\mathcal{\breve M}(a,b)$ of unparameterized gradient trajectories is the quotient of $\mathcal{M}(a,b)$ by the $\mathbb{R}$-translation $\gamma(\cdot) \mapsto \gamma(\cdot + s)$, so
\begin{align}
\label{eqn:morsedim}
\mathrm{dim} \ \mathcal{\breve M}(a,b) = \lambda_a - \lambda_b - 1.
\end{align}

The Morse complex $C(M)$ is the $\mathbb{F}$-vector space with basis the set of critical points:
$$C(M) = \bigoplus_{a \in \text{crit}(f)} \mathbb{F}a$$
The differential $\partial$ counts unparameterized gradient trajectories:
$$\langle \partial a, b\rangle = |\mathcal{\breve M}(a,b)|.$$
This notation signifies that the coefficient of $b$ in $\partial a$ is the number of unparameterized gradient trajectories from $a$ to $b$, with the convention $|\cdot|=0$ on infinite sets.  The Morse complex is graded by Morse index and the differential has degree $-1$ by \eqref{eqn:morsedim}.  The Morse homology $H_*(C(M),\partial)$ is isomorphic to the singular homology of $M$.

\subsection*{B. \ \ \, Morse homology on a manifold with boundary}

Now let $M$ be a smooth manifold with boundary. The manifold $M$ may be identified with the quotient of a smooth double $$\tilde M = M \cup_{\partial M} -M$$ by the natural involution $i : \tilde M \to \tilde M$.  An $i$-invariant metric and Morse function on $\tilde M$ determine a metric and Morse function on $M$ by restriction. For such a metric and Morse function on $M$, the gradient field $\nabla f$ is tangent to $\partial M$ along the boundary.  Furthermore, at each critical point on the boundary, the normal vector is an eigenvector of the Hessian $\nabla^2 f$.  A boundary critical point is called {\em stable} or {\em unstable} according to whether the corresponding eigenvalue is positive or negative, respectively.  Hence we may sort the critical points of $f$ into three types: interior, (boundary-)stable, and (boundary-)unstable.  We denote these types by the letters $o$, $s$, and $u$. If a trajectory $\gamma \in \mathcal{\breve M}^\partial(a,b)$ intersects the interior of $M$, then $a$ must be interior or unstable and $b$ must be interior or stable.  On the other hand, if $a$ is stable and $b$ is unstable, then both $U_a$ and $S_b$ are contained in $\partial M$ and the most we can ask is that $U_a$ and $S_b$ intersect transversely as submanifolds of $\partial M$.   Indeed, we insist that $f$ is {\em regular}, which means
\begin{itemize}
\item $U_a$ and $S_b$ intersect transversely in $\partial M$ if $a$ is stable and $b$ is unstable.
\item $U_a$ and $S_b$ intersect transversely in $M$ if $a$ is not stable or $b$ is not unstable.
\end{itemize}
Note that regular is equivalent to Morse-Smale if $\partial M = \emptyset$.

Next we define eight linear maps between the vector spaces $C^o(M)$, $C^s(M)$, and $C^u(M)$ generated by the interior, stable, and unstable critical points, respectively:
\begin{align*}
\partial^o_o: C^o(M) \to C^o(M) & \quad & \bar\partial^s_s: C^s(M) \to C^s(M) \\
\partial^o_s: C^o(M) \to C^s(M) & & \bar\partial^s_u: C^s(M) \to C^u(M) \\
\partial^u_o: C^u(M) \to C^o(M) & & \bar\partial^u_u: C^u(M) \to C^u(M) \\
\partial^u_s: C^u(M) \to C^s(M) &  & \bar\partial^u_s: C^u(M) \to C^s(M)
\end{align*}
The maps $\partial^*_*$ count unparameterized gradient trajectories as before, so
$$\langle \partial^*_* a,b \rangle = |\mathcal{\breve M}(a,b)|.$$
The maps $\bar\partial^*_*$ only count those trajectories which remain in the boundary.  More precisely,
$$\langle \bar\partial^*_* a,b \rangle = |\mathcal{\breve M}^\partial(a,b)|$$
where
$\mathcal{\breve M}^\partial(a,b) = \mathcal{\breve M}(a,b) \cap \left(\{\gamma: \mathbb{R} \to \partial M\}/ \mathbb{R}\right)$.

\begin{figure}[htp]
\centering
\includegraphics[width=125mm]{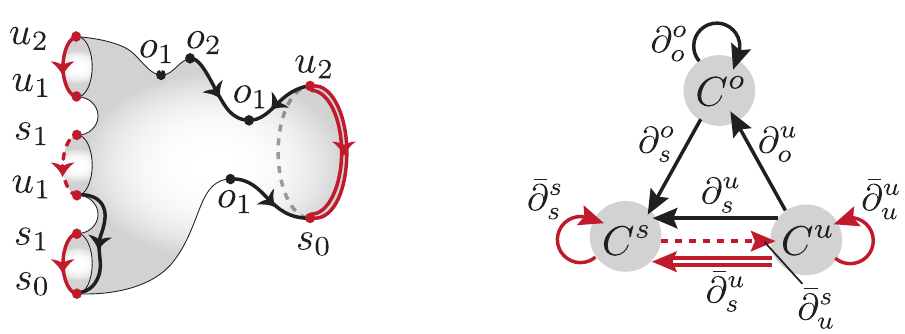}
\caption{Consider the surface at left embedded in $\mathbb{R}^3$ with the induced metric and height function.  Each critical point is labeled by its type and Morse index.  The directed graph at right depicts the vector spaces generated by the interior, stable, and unstable critical points together with eight linear maps between them.  On the surface we have highlighted a single gradient trajectory counted by each map:  four interior trajectories in black and four boundary trajectories in red. The dashed red line from $s$ to $u$ signifies that $\bar\partial^s_u$ is index-preserving.  The doubled red line from $u$ to $s$ signifies that the $\bar\partial^u_s$ lowers index by 2.  The other six maps lower index by 1.}
\label{fig:morseOps}
\end{figure}

The boundary, absolute, and relative Morse complexes of $M$ of are given by:
\begin{align*}
C(\partial M) &= C^s(M) \oplus C^u(M) \ \ \ & C(M) = C^o(M) \oplus C^s(M) \ \ \ \quad & C(M, \partial M) = C^o(M) \oplus C^u(M) \\
\partial_{\partial M} &= \left[\begin{array}{rr}
\ess & \eus\\
\esu & \euu
\end{array}\right] \ \ \
& \partial = \left[\begin{array}{rr}
\doo & \duo\esu\\
\dos & \ess + \dus\esu
\end{array}\right] \ \ \ \quad
& \partial_{M,\partial M} = \left[\begin{array}{rr}
\doo & \duo\\
\esu\dos & \euu + \esu\dus
\end{array}\right]
\end{align*}
The reader should compare the definition of $\partial$ with that of $D$ in Figure \ref{fig:morsePaths}.  Figure \ref{fig:morseExample} illustrates each complex for a particular metric and Morse function on the 2-disk.

\begin{figure}[htp]
\centering
\includegraphics[width=156mm]{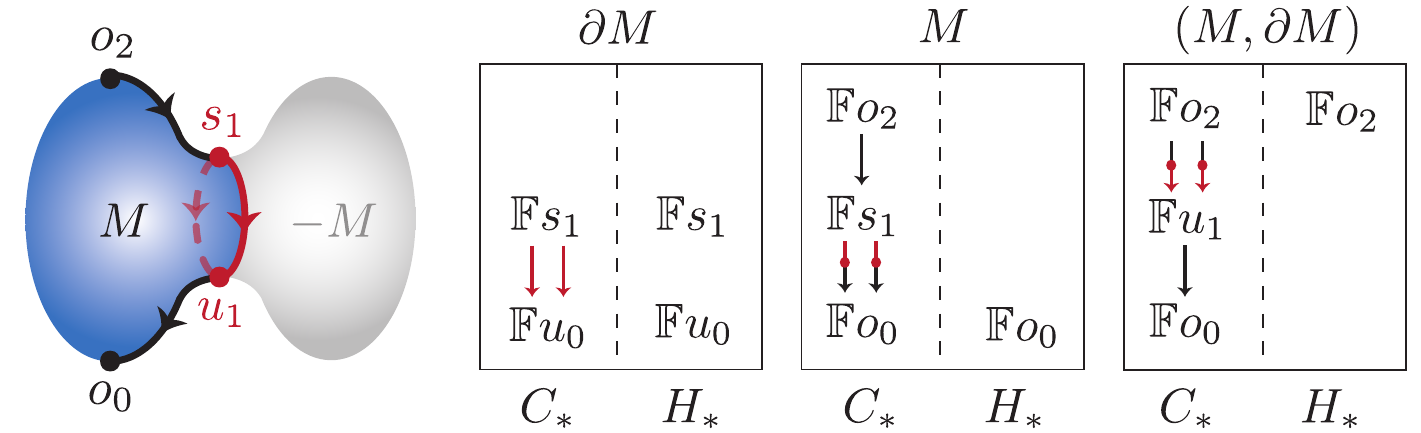}
\caption{At left, the double of the 2-disk $M$ is symmetrically embedded in $\mathbb{R}^3$.  The induced metric and height function on $M$ determine boundary, absolute, and relative Morse complexes over $\mathbb{F}$ (so two arrows are equivalent to zero arrows).  In each case, the Morse homology agrees with the singular homology of the 2-disk as expected.}
\label{fig:morseExample}
\end{figure}

For each map $\partial^*_*$, there is a map $\tilde\partial\delta^*_*$ which counts points in the boundary of the compactification of $\mathcal{\breve M}(a,b)$.  For each map $\bar \partial^*_*$, there is a map $\tilde\partial\delta^*_*$ which counts points in the boundary of the compactification of $\mathcal{\breve M}^\partial(a,b)$.  These maps should be compared with the definition of $\delta$ in Figure \ref{fig:morsePaths}.
\begin{align*}
\tilde\delta\doo &=  \doo \doo + \duo\esu\dos & \tilde\delta\ess &= \ess\ess + \eus\esu\\
\tilde\delta\dos &= \dos\doo + \ess\dos + \dus\esu\dos & \tilde\delta\esu &= \esu\ess + \euu\esu\\
\tilde\delta\duo&= \doo\duo + \duo\euu + \duo\esu\dus & \tilde\delta\eus &= \ess\eus + \eus\euu\\
\tilde\delta\dus &=  \eus + \dos\duo + \ess\dus + \dus\euu + \dus\esu\dus & \tilde\delta\euu &= \esu\eus + \euu\euu 
\end{align*}
Alternatively, we may view the right-hand sides as relations, since each counts points in the boundary of a 1-dimensional space (itself a union of products of trajectory spaces).  As in Figure \ref{fig:morsePaths}, the structure equation takes the form
\begin{align*}
\left[\begin{array}{rr}
\tilde\delta\doo & (\tilde\delta\duo)\esu + \duo(\tilde\delta\esu) \\
\tilde\delta\dos & \delta\ess + (\tilde\delta\dus)\esu + \dus(\tilde\delta\esu)
\end{array}\right] = \partial^2.
\end{align*}
Since every term on the left vanishes, $\partial$ is indeed a differential.  A similar argument applies to $\partial_{\partial M}$ and $\partial_{M,\partial M}$.

\subsection*{C. \ \ \, Relation to cell structures and CW-homology}

On a closed manifold, a Morse-Smale function whose critical values increase with index induces a cell structure with one $k$-cell for each index-$k$ critical point.  The $k$-cell is given by the unstable manifold $U_o$ of the critical point $o_k$.  The Morse complex then coincides with the chain complex $C^\text{cell}(M)$ underlying $\mathrm{CW}$-homology.

A similar relationship should hold for a manifold with boundary, motivated by the local picture of passing a critical level in Figure \ref{fig:handles}.  We suggest that a regular function whose critical values increase with index induces a cell structure with:
\begin{itemize}
\item a $k$-cell $U_o \subset M$ for each interior critical point $o_k$.
\item a $k$-cell $U_s \subset \partial M$ for each stable critical point $s_k$.
\item a $k$-cell $U_u \subset M$ and a $(k-1)$-cell $\partial U_u \subset \partial M$ for each unstable critical point $u_k$.
\end{itemize}
This should follow from the argument in the closed case adapted to the double $\tilde M$, though we do not attempt this here.

Assuming the above, for each unstable critical point, the $(k-1)$-cell $\partial U_u$ occurs with multiplicity $1$ in the boundary of the $k$-cell $U_u$.  There is therefore a single arrow between each pair of unstable generators in $C^\text{cell}_*(M)$ and a ``cancellation'' homotopy equivalence to a smaller complex $C'_*(M)$ generated by the interior and stable cells.  The components of the differential on $C'_*(M)$ running from stable to interior generators are induced by ``zig-zags'' in $C_*^\text{cell}(M)$ involving the algebraically-cancelled pairs of unstable generators.  A short exercise reveals that the complex $C'_*(M)$ is naturally identified with the Morse complex.  In Figure \ref{fig:cellMorse}, we illustrate this relationship for the example in Figure \ref{fig:morseExample}. A similar exercise applies to the relative and boundary complexes without cancellation.  In summary:
\begin{align*}
C_*^\text{cell}(M) \xrightarrow{\sim} C'_*(M) &= C^\text{morse}_*(M) \\
C_*^\text{cell}(M, \partial M) &= C^\text{morse}_*(M, \partial M) \\
C_*^\text{cell}(\partial M) &= C^\text{morse}_*(\partial M)
\end{align*}
In this way, the rather strange Morse differentials arises naturally from the cellular perspective.

\begin{figure}[htp]
\centering
\includegraphics[width=135mm]{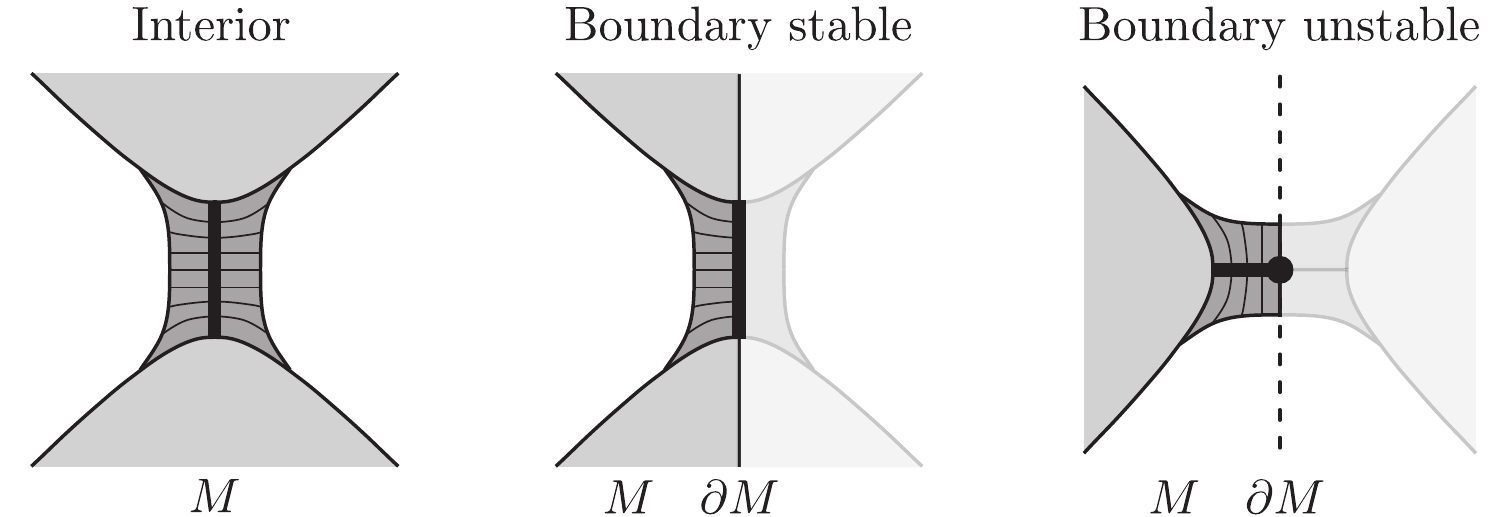}
\caption{The local level-set of $M$ just about a critical value for each type of critical point, with the mirror of $M$ faded.}
\label{fig:handles}
\end{figure}

\begin{figure}[htp]
\centering
\includegraphics[width=135mm]{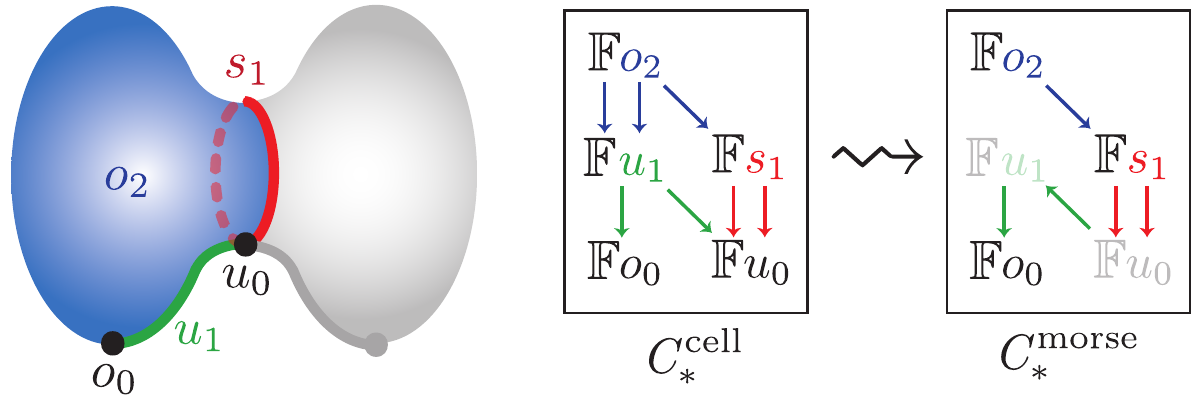}
\caption{We consider the example in Figure \ref{fig:morseExample} from the cellular perspective.  The cellular chain complex is generated by five cells and the differential encodes the algebraic multiplicities of the attaching maps.  The Morse complex results from algebraically canceling the pair of unstable cells.}
\label{fig:cellMorse}
\end{figure}

\begin{figure}[htp]
\centering
\includegraphics[width=156mm]{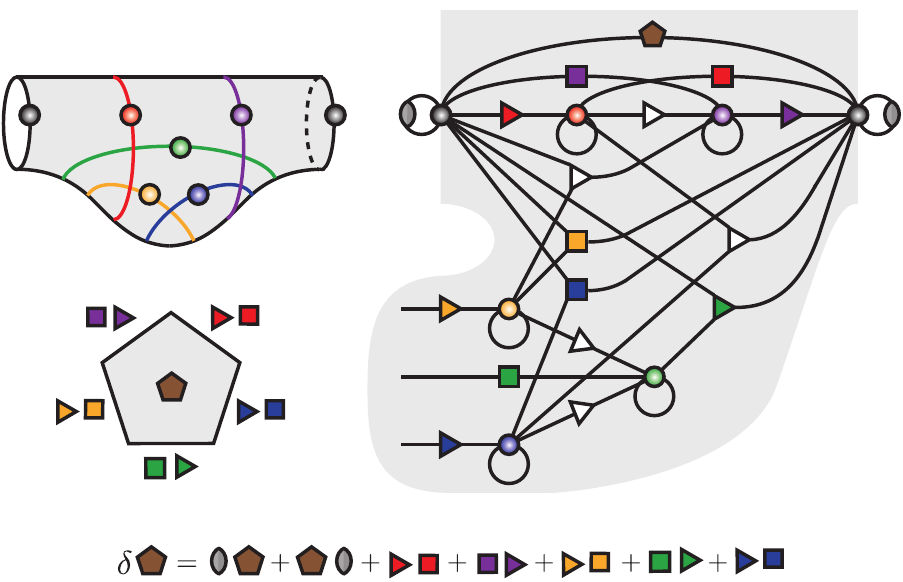}
\caption{The proof of the surgery exact triangle for $\check{\mathit{HM}}$ involves a map $\check{G}$ defined by counting monopoles on a cobordism over a family of metrics parameterized by a pentagon.  The edges of the pentagon correspond to five hypersurfaces which intersect as illustrated at top left (compare with Figure 2 in \cite{kmos}).  The map $\check{G}$ corresponds to the edge labelled by the brown pentagon in the higraph, which has 7 vertices and and 23 edges.  In \cite{kmos}, the four aligned vertices are type 2 ($\check{\mathit{HM}}$) and the remaining three vertices are type 1 ($\widehat{\mathit{HM}}$).  The path DGA on the bigraph (which need not be shown!) determines the terms of all monopole maps represented by edges in the higraph, and the structure equation implies that that $\check{G}$ satisfies the relation at the bottom of the figure (this agrees with Proposition 5.5 in \cite{kmos} because the yellow and blue triangles are zero maps for geometric reasons).  Note that since this higraph is tree-like, the assignment $\psi \equiv 1$ is balanced as well, leading to the surgery triangle for $\widehat{\mathit{HM}}$.  The key point here is that, due to Theorem \ref{thm:bistructure}, one can justifiably work on the level of higraphs, as though all monopoles in life were irreducible after all.}
\label{fig:exactTriangle}
\end{figure}

\bibliographystyle{hplain}
\bibliography{linksurg}


\noindent {\small \textsc{Department of Mathematics, Massachusetts Institute of Technology, Cambridge, USA}} \\
\textit{E-mail address:} \texttt{jbloom@math.mit.edu} \\
\textit{URL:} \url{http://math.mit.edu/~jbloom/}

\end{document}